\newtheorem{thm}{Theorem}[section]
\newtheorem{cor}[thm]{Corollary}
\newtheorem{prop}[thm]{Proposition}
\newtheorem{lem}[thm]{Lemma}
\theoremstyle{definition}
\newtheorem{defn}[thm]{Definition}
\newtheorem{notn}[thm]{Notation}
\theoremstyle{remark}
\newtheorem{rem}[thm]{Remark}
\let\c@equation\c@thm
\numberwithin{equation}{section}
\title[Effective equidistribution of circles]{Effective equidistribution of circles  in the limit sets of Kleinian groups}
\author{Wenyu Pan}
\address{Mathematics Department, Yale University, New Haven, CT 06520}
\email{wenyu.pan@yale.edu}
\begin{document}

\begin{abstract}
Consider a general circle packing $\mathcal{P}$ in the complex plane $\mathbb{C}$ invariant under a Kleinian group $\Gamma$. When $\Gamma$ is convex cocompact or its critical exponent is greater than 1, we obtain an effective equidistribution for small circles in $\mathcal{P}$ intersecting any bounded connected regular set in $\mathbb{C}$; this provides an effective version of an earlier work of Oh-Shah \cite{Asymptotic}. In view of the recent result of McMullen-Mohammadi-Oh \cite{MMO}, our effective circle counting theorem applies to the circles contained in the limit set of a convex cocompact but non-cocompact Kleinian group whose limit set contains at least one circle. Moreover consider the circle packing $\mathcal{P}(\mathcal{T})$ of the ideal triangle attained by filling in largest inner circles. We give an effective estimate to the number of disks whose hyperbolic areas are greater than $t$, as $t\to0$, effectivising the work of Oh \cite{Harmonic analysis}.
 \end{abstract}

\maketitle

\section{Introduction}
A circle packing in the complex plane $\mathbb{C}$ is simply a countable union of circles (here a line is regarded as a circle of infinite radius).  Compared to the conventional definition of a circle packing, our definition is more general as circles are allowed to intersect each other. Given a circle packing $\mathcal{P}$, 
we seek to estimate the number of small circles intersecting a bounded subset in $\mathbb{C}$ (see Figure 1 for examples).  

\begin{figure}[h]
\centering
\begin{subfigure}{.5\textwidth}
  \centering
  \includegraphics[width=.6\linewidth]{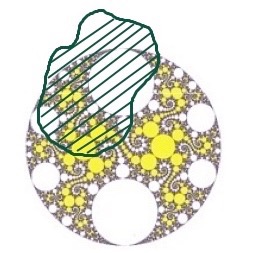}
  \label{fig:sub1}
\end{subfigure}%
\begin{subfigure}{.5\textwidth}
  \centering
  \includegraphics[width=.6\linewidth]{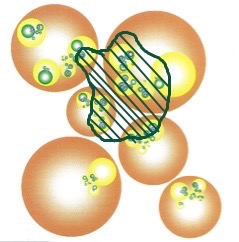}
  \label{fig:sub2}
\end{subfigure}
\caption{Circle packing intersecting bounded region (background pictures are reproduced from Indra's Pearls: The Vision of Felix Klein, by D. Mumford, C. Series and D. Wright, copyright Cambridge University Press 2002)}
\label{fig:test}
\end{figure}

Assume $\mathcal{P}$ is locally finite, i.e., for any $T>1$, there are only finitely many circles in $\mathcal{P}$ of Euclidean curvature at most $T$ intersecting any fixed bounded subset in $\mathbb{C}$. For a bounded subset $E$ in $\mathbb{C}$ and $T>1$, we set
\begin{equation*}
N_T(\mathcal{P},E):=\#\{C\in \mathcal{P}: C\cap E \neq \emptyset, \operatorname{Curv}(C)<T\},
\end{equation*}
where $\operatorname{Curv}(C)$ denotes the Euclidean curvature of $C$. As $\mathcal{P}$ is locally finite, $N_T(\mathcal{P},E)<\infty$.

In \cite{Asymptotic}, Oh and Shah considered a very general locally finite circle packing $\mathcal{P}$: suppose $\mathcal{P}$ is invariant under a torsion-free non-elementary geometrically finite Kleinian group $\Gamma<\operatorname{PSL}_2(\mathbb{C})$ \footnote{In the rest of the paper, we always assume the Kleinian groups we consider are torsion-free and non-elementary}. They obtained an asymptotic estimate to $N_T(\mathcal{P},E)$. In particular, they introduced a locally finite Borel measure $\omega_{\Gamma}$ on $\mathbb{C}$ determined by $\Gamma$ (Definition \ref{measure on the complex plane})  such that under some further assumption on $\Gamma$, we have
\begin{equation*}
\lim_{T\to \infty}\frac{N_T(\mathcal{P},E_1)}{N_T(\mathcal{P},E_2)}=\frac{\omega_{\Gamma}(E_1)}{\omega_{\Gamma}(E_2)},
\end{equation*}
where $E_1$ and $E_2$ are any bounded Borel sets in $\mathbb{C}$ satisfying $\omega_{\Gamma}(\partial(E_1))=\omega_{\Gamma}(\partial(E_2))=0$.

In this paper, we extend Oh-Shah's result and provide an effective estimate to $N_T(\mathcal{P},E)$. To apply our theorem, we need to impose a more stringent condition on $E$:  we require not only $\omega_{\Gamma}(\partial (E))=0$ but the $\epsilon$-neighborhood of $\partial(E)$ is of small size. Sets satisfying such property will be called regular (Definition \ref{regularity condition}). Denote the critical exponent of $\Gamma$ by $\delta_{\Gamma}$. We show the following.
\begin{thm}
\label{main thm}
Assume $\mathcal{P}$ is a locally finite circle packing invariant under a geometrically finite Kleinian group $\Gamma$ and with finitely many $\Gamma$-orbits. When $\delta_{\Gamma}\leq 1$, we assume further that $\Gamma$ is convex cocompact.  Then for any  bounded connected regular set $E\subset \mathbb{C}$, there exists  $\eta>0$, such that as $T \to \infty$,
\begin{equation*}
N_T(\mathcal{P},E)=c\cdot\omega_{\Gamma}(E)\cdot T^{\delta_{\Gamma}}+O(T^{\delta_{\Gamma}-\eta}),
\end{equation*}  
where $c>0$ is a constant depending only on $\Gamma$ and $\mathcal{P}$.
\end{thm}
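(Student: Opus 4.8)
The plan is to convert the geometric counting of circles into a dynamical counting problem on the frame bundle $\Gamma\backslash G$, where $G=\mathrm{PSL}_2(\mathbb{C})$, and then to run an effective version of the equidistribution-implies-counting machinery that underlies Oh--Shah \cite{Asymptotic}. First I would set up the dictionary between circles and group elements. Fixing a reference circle $C_0\subset\mathbb{C}=\partial\mathbb{H}^3$, which bounds a geodesic plane in $\mathbb{H}^3$ whose stabilizer $H$ in $G$ is conjugate to $\mathrm{PSL}_2(\mathbb{R})$, the space of all circles is identified with $G/H$, and the curvature of $gC_0$ is an explicit function of $g$. Since $\mathcal{P}$ consists of finitely many $\Gamma$-orbits, with representatives $C_1,\dots,C_k$ and associated cosets $g_iH$, the count decomposes into a finite sum of orbital counts
\[
N_T(\mathcal{P},E)=\sum_{i=1}^{k}\#\{\gamma\in\Gamma/(\Gamma\cap g_iHg_i^{-1}):\gamma g_iC_0\cap E\neq\emptyset,\ \operatorname{Curv}(\gamma g_iC_0)<T\}.
\]
The two conditions ``$\operatorname{Curv}<T$'' and ``$\cap E\neq\emptyset$'' cut out a region $B_T(E)\subset G/H$, so each term counts the points of the $\Gamma$-orbit $\Gamma g_iH$ lying in $B_T(E)$.

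Second, the heart of the matter is effective equidistribution of the translated $H$-orbit. I would parametrize $B_T(E)$ by the frame flow $a_t$ together with the horospherical directions transverse to $H$, reducing the count of $\Gamma g_iH$ in $B_T(E)$ to understanding the push-forward of the Patterson--Sullivan measure on $\Gamma\backslash\Gamma g_iH$ under $a_{\log T}$. The key analytic input is that, under the standing hypothesis, the Bowen--Margulis--Sullivan measure is \emph{exponentially} mixing for the frame flow; this is exactly where the assumption ($\delta_\Gamma>1$, or $\Gamma$ convex cocompact) is used, since $\delta_\Gamma>1$ forces the base eigenvalue to be isolated in the $L^2$-spectrum, while convex cocompactness supplies a resonance-free strip. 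From exponential mixing, a thickening-in-mixing argument yields an effective equidistribution statement: for smooth compactly supported $\psi$ on $\Gamma\backslash G$,
\[
e^{(2-\delta_\Gamma)t}\int_{\Gamma\backslash\Gamma g_iH}\psi(ha_t)\,d\mu^{\mathrm{PS}}(h)=\lambda_i\,m^{\mathrm{BMS}}(\psi)+O\!\left(e^{-\kappa t}\,\mathcal{S}(\psi)\right),
\]
for some $\kappa>0$, a Sobolev norm $\mathcal{S}(\psi)$, and a constant $\lambda_i>0$ equal to the total skinning/PS-mass of the $i$-th orbit.

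Third, I would transfer this equidistribution into effective counting by the standard smearing technique: approximate the indicator of $B_T(E)$ from above and below by functions obtained by convolving in the transverse directions with a bump of width $\epsilon$, apply the effective equidistribution above to these smooth approximants, and then estimate the error introduced by the smoothing. The boundary error is controlled precisely by the $\omega_\Gamma$-measure of the $\epsilon$-neighborhood of $\partial E$, which is where the \emph{regularity} of $E$ (Definition \ref{regularity condition}) enters: it guarantees this error is $O(\epsilon^{\alpha})$ for some $\alpha>0$. Setting $t=\log T$, the main term assembles (after unthickening) into $c\cdot\omega_\Gamma(E)\cdot T^{\delta_\Gamma}$, and optimizing $\epsilon=\epsilon(T)$ to balance the dynamical error $T^{-\kappa}$ against the geometric boundary error $\epsilon^{\alpha}$ produces the power saving $O(T^{\delta_\Gamma-\eta})$.

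The main obstacle I anticipate is the non-compactness of $\Gamma\backslash G$ in the geometrically finite (cusped) case when $\delta_\Gamma>1$. There the PS and BMS measures behave non-uniformly in cusp neighborhoods, the natural test functions fail to be compactly supported once circles wander deep into the cusps, and both the exponential-mixing estimate and the smearing error acquire $T$-dependent losses. Controlling these requires a quantitative thick-thin decomposition: an upper bound on the number of circles whose associated geodesics spend a long time in the cusps, together with uniform decay estimates for the measures of cusp neighborhoods, so that the cuspidal contribution stays within the claimed error. This is precisely why the theorem bifurcates into the convex cocompact case (where the relevant support in $\Gamma\backslash G$ is compact and these issues disappear) and the $\delta_\Gamma>1$ case (where the strong spectral gap is available to absorb the cuspidal losses).
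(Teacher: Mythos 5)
Your high-level architecture is the same as the paper's: reformulate $N_T(\mathcal{P},E)$ as a count of $[e]\Gamma$ in a region $B_T(E)\subset H\backslash G$ (Proposition \ref{reformulation 3}), feed this into an effective equidistribution theorem for $a_t$-translates of the $H$-orbit, smear against an $\epsilon$-bump, and optimize $\epsilon$ against $T$ using the regularity of $E$. However, your key analytic input is not correct as stated, and the error is not cosmetic. The formula
\begin{equation*}
e^{(2-\delta_\Gamma)t}\int\psi(ha_t)\,d\mu^{\operatorname{PS}}(h)=\lambda_i\, m^{\operatorname{BMS}}(\psi)+O\left(e^{-\kappa t}\mathcal{S}(\psi)\right)
\end{equation*}
cannot hold: translates integrated against the Patterson--Sullivan measure on the $H$-orbit equidistribute toward $m^{\operatorname{BMS}}$ \emph{with no exponential normalization} (the BMS measure is flow-invariant), so with the factor $e^{(2-\delta_\Gamma)t}$ in front your left-hand side diverges. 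The normalization $e^{(2-\delta)t}$ belongs to the version in which one integrates against \emph{Haar} measure $dh$ on $\Gamma_H\backslash C_0^{\dagger}$, and then the limiting measure is the \emph{Burger--Roblin} measure, not BMS: this is exactly Theorem \ref{effective distribution}, which is what the paper uses. The distinction matters structurally, not just notationally: when one unfolds $\langle F^{\epsilon,\pm}_T,\Psi^{\epsilon}\rangle$ over $\Gamma_H\backslash\Gamma$, the inner integral that appears along the $H$-orbit is automatically a Haar integral (see (\ref{counting equation})); the singular PS measure cannot be produced by unfolding, so a PS/BMS-type statement, even correctly stated, does not plug into the counting argument.

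Relatedly, your step ``the main term assembles (after unthickening) into $c\cdot\omega_\Gamma(E)\cdot T^{\delta_\Gamma}$'' conceals a missing ingredient: one must convert the limiting measure evaluated on the shifted bumps, $\int_{z\in E}m^{\operatorname{BR}}(\Psi^{\epsilon}_{-z})\,dn_{-z}$, into $\omega_{\Gamma}(E^{\pm}_{\epsilon})$. This is Proposition \ref{relation of measures}, proved by an explicit $MAN^{-}N$-coordinate computation (Lemma \ref{coordinates for product}), and it works precisely because the limiting measure is Burger--Roblin; with $m^{\operatorname{BMS}}$ in the main term one does not obtain $\omega_\Gamma$. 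Two further points. First, the portion of $B_T(E)$ with small $A$-coordinate (the set $V_1$, where the passage from $KA^{+}N$ to $HAN$-type coordinates degenerates) is not treated by equidistribution at all, but by reversing the reformulation and invoking local finiteness of $\mathcal{P}$; your sketch has no counterpart for this region. Second, your anticipated obstacle about cusps is largely a red herring for this theorem: the cusp difficulties are absorbed into the black-boxed Theorem \ref{effective distribution} and into the finiteness of the skinning size $\operatorname{sk}_{\Gamma}(\mathcal{P})$, and no thick-thin decomposition of the counting region is needed here --- that issue genuinely arises only for Theorem \ref{main thm 1}, where the region is unbounded.
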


    


Denote by $\Lambda (\Gamma)\subset \mathbb{C}\cup \{\infty\}$ the limit set of $\Gamma$ which is the set of accumulation points of an orbit of $\Gamma$ in $\mathbb{C}\cup \{\infty\}$ under the linear fractional transformation action. When $\Gamma$ is convex cocompact or it has no rank 2 cusps with $\delta_{\Gamma}>1$,  the measure $\omega_{\Gamma}(E)$  in Theorem \ref{main thm} equals the $\delta_{\Gamma}$-dimensional Hausdorff measure of $E\cap\Lambda (\Gamma)$ \cite{Sullivan}. 

\subsection*{Circles in the limit set of a Kleinian group}

   Suppose $\Gamma$ is convex cocompact. Consider the set of circles contained in $\Lambda(\Gamma)$:
   \begin{equation*}
   \mathcal{I}(\Gamma):=\{C\subset \Lambda(\Gamma)\}.
   \end{equation*}
   McMullen, Mohammadi and Oh  showed that if $\Lambda(\Gamma)\ne \mathbb C\cup\{\infty\}$,
there are only finitely many $\Gamma$-orbits of circles in $\mathcal{I}(\Gamma)$,
and each such circle arises from a compact $\operatorname{PSL_2(\mathbb R)}$-orbit (Corollary 11.3 and Theorem B.1 in \cite{MMO}); this implies that $\mathcal{I}(\Gamma)$ is a locally finite circle packing with finitely many $\Gamma$-orbits and hence
Theorem \ref{main thm} applies to $\mathcal I(\Gamma)$.

 \begin{cor}
 \label{circles in the limit set}
Let $\Gamma$ be a convex cocompact Kleinian subgroup with critical exponent $\delta_{\Gamma}<2$.  Assume that $\mathcal I(\Gamma)$ is non-empty.
Then for any bounded connected regular set $E\subset\mathbb{C}$ , there exists $\eta>0$, such that as $T\to \infty$,
\begin{equation*}
N_T(\mathcal{I}(\Gamma),E)=c\cdot \mathcal{H}^{\delta_{\Gamma}}(E\cap \Lambda (\Gamma))\cdot T^{\delta_{\Gamma}}+O(T^{\delta_{\Gamma}-\eta}),
\end{equation*}
where $c>0$ is a constant depending only on  $\Gamma$, and  $\mathcal{H}^{\delta_{\Gamma}}(E\cap \Lambda (\Gamma))$ is the $\delta_{\Gamma}$-dimensional Hausdorff measure of $E\cap \Lambda (\Gamma)$.
\end{cor}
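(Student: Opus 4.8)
The plan is to deduce this corollary directly from Theorem \ref{main thm}, applied to the particular packing $\mathcal{P}=\mathcal{I}(\Gamma)$, after verifying that this packing satisfies the hypotheses of the theorem and then translating the conclusion through Sullivan's identification of the measures. First I would note that convex cocompactness implies geometric finiteness, so the standing assumptions on $\Gamma$ in Theorem \ref{main thm} are met; moreover the case distinction in that theorem (which requires convex cocompactness precisely when $\delta_\Gamma\le 1$) is automatically satisfied here, since $\Gamma$ is convex cocompact by hypothesis regardless of the value of $\delta_\Gamma$.

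The substantive step is to check that $\mathcal{I}(\Gamma)$ is a locally finite circle packing with finitely many $\Gamma$-orbits. Here the hypothesis $\delta_\Gamma<2$ is used to rule out the degenerate case $\Lambda(\Gamma)=\mathbb{C}\cup\{\infty\}$: for a non-elementary geometrically finite group the critical exponent equals the Hausdorff dimension of the limit set (\cite{Sullivan}, or Bishop--Jones), and the full sphere $\mathbb{C}\cup\{\infty\}$ has Hausdorff dimension $2$, so $\Lambda(\Gamma)=\mathbb{C}\cup\{\infty\}$ would force $\delta_\Gamma=2$; the contrapositive gives $\Lambda(\Gamma)\ne\mathbb{C}\cup\{\infty\}$. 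With this in hand, and with $\mathcal{I}(\Gamma)$ nonempty by assumption, the rigidity results of McMullen--Mohammadi--Oh (Corollary 11.3 and Theorem B.1 of \cite{MMO}) apply and yield that there are only finitely many $\Gamma$-orbits of circles in $\mathcal{I}(\Gamma)$, each arising from a compact $\operatorname{PSL}_2(\mathbb{R})$-orbit. The compactness of these orbits is what upgrades ``finitely many orbits'' to \emph{local finiteness} in the sense used to define $N_T$: it prevents the curvatures of circles meeting a fixed bounded set from accumulating, so that for every $T$ only finitely many such circles have curvature below $T$.

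Having verified the hypotheses, I would apply Theorem \ref{main thm} with $\mathcal{P}=\mathcal{I}(\Gamma)$ to an arbitrary bounded connected regular set $E\subset\mathbb{C}$, obtaining
\[
N_T(\mathcal{I}(\Gamma),E)=c\cdot\omega_\Gamma(E)\cdot T^{\delta_\Gamma}+O\bigl(T^{\delta_\Gamma-\eta}\bigr)
\]
for some $\eta>0$ and a constant $c>0$ depending only on $\Gamma$ (the dependence on $\mathcal{P}$ in Theorem \ref{main thm} becomes a dependence on $\Gamma$ alone once $\mathcal{P}=\mathcal{I}(\Gamma)$ is intrinsically determined by $\Gamma$). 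Finally, since $\Gamma$ is convex cocompact, the remark following Theorem \ref{main thm}, based on \cite{Sullivan}, identifies $\omega_\Gamma(E)$ with the $\delta_\Gamma$-dimensional Hausdorff measure $\mathcal{H}^{\delta_\Gamma}(E\cap\Lambda(\Gamma))$, which yields the stated asymptotic.

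The only step demanding genuine care is the second one: confirming that the abstract hypotheses of Theorem \ref{main thm}, namely local finiteness together with finiteness of the orbit set, actually hold for $\mathcal{I}(\Gamma)$. This is not an internal computation but a matter of importing and correctly invoking the structure theorems of \cite{MMO}; the main obstacle is thus ensuring that ``finitely many orbits, each a compact $\operatorname{PSL}_2(\mathbb{R})$-orbit'' does translate into the precise local finiteness built into the definition of $N_T(\mathcal{P},E)$. Once that bridge is secured, the corollary is a direct specialization of the main theorem.
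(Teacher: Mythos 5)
Your proposal is correct and follows essentially the same route as the paper: the paper likewise deduces the corollary by invoking McMullen--Mohammadi--Oh (Corollary 11.3 and Theorem B.1 of \cite{MMO}) to conclude that $\mathcal{I}(\Gamma)$ is a locally finite packing with finitely many $\Gamma$-orbits (using $\delta_\Gamma<2$ to ensure $\Lambda(\Gamma)\neq\mathbb{C}\cup\{\infty\}$), then applies Theorem \ref{main thm} and identifies $\omega_\Gamma$ with the $\delta_\Gamma$-dimensional Hausdorff measure on $\Lambda(\Gamma)$ via \cite{Sullivan}. Your added justifications (Hausdorff dimension of the full sphere, and compactness of the $\operatorname{PSL}_2(\mathbb{R})$-orbits preventing curvature accumulation) are faithful elaborations of steps the paper leaves implicit.
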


\subsection*{Circles in  an ideal triangle of $\mathbb{H}^2$}
Let $\mathcal{T}$ be an ideal triangle in the hyperbolic plane $\mathbb{H}^2$, i.e., a triangle whose sides are hyperbolic lines connecting vertices on the geometry boundary $\partial{\mathbb{H}}^2$. Such an ideal triangle exists and is unique up to hyperbolic isometries. Consider the circle packing $\mathcal{P}(\mathcal{T})$ in $\mathcal{T}$ attained by filling in the largest inner circles (see Figure 2). We give an effective estimate to the number of disks enclosed by circles in $\mathcal{P(\mathcal{T})}$ whose hyperbolic areas are greater than $t$. 

Let $\overline{\mathcal{P}(\mathcal{T})}$ be the closure of $\mathcal{P}(\mathcal{T})$. The Hausdorff dimension of $\overline{\mathcal{P}(\mathcal{T})}$, denoted by $\alpha$, equals the residual dimension of an Apollonian circle packing \cite{McMullen}. For $C\in \mathcal{P}(\mathcal{T})$, let $\operatorname{Area}_{\operatorname{hyp}}(C)$  be the hyperbolic area of the disk enclosed by $C$. 
\begin{thm} 
\label{main thm 1}
There exist $c>0$ and  $\eta>0$, such that  as $t \to 0$,
\begin{equation*}
\#\{C\in \mathcal{P}(\mathcal{T}):\operatorname{Area}_{\operatorname{hyp}}(C)>t\}=c\, t^{-\frac{\alpha}{2}}+O(t^{-\frac{\alpha}{2}+\eta}).
\end{equation*}
\end{thm}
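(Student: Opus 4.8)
The plan is to convert the hyperbolic-area count into an orbit-counting problem of the type handled by Theorem \ref{main thm}, using a conformal-invariance observation that turns hyperbolic area into the inversive distance between each circle and the bounding circle of $\mathbb{H}^2$.

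First I would record the elementary but decisive fact that, for a circle $C$ bounding a disk inside $\mathbb{H}^2$, the hyperbolic area of that disk depends only on the inversive distance $\delta(C) := \operatorname{invd}(C,\partial\mathbb{H}^2)$ between $C$ and the fixed circle $S := \partial\mathbb{H}^2$. In the upper half-plane model with $S=\hat{\mathbb{R}}$, a circle of Euclidean radius $R$ whose center lies at height $y_c$ satisfies $\delta(C)=y_c/R$ and encloses hyperbolic area $\operatorname{Area}_{\operatorname{hyp}}(C)=2\pi\bigl(\delta(C)/\sqrt{\delta(C)^2-1}-1\bigr)$; since inversive distance is a M\"obius invariant, this is model-independent. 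The right-hand side is a strictly decreasing bijection of $(1,\infty)$ onto $(0,\infty)$, so $\{C:\operatorname{Area}_{\operatorname{hyp}}(C)>t\}=\{C:\delta(C)<D(t)\}$, and expanding near $\delta=\infty$ gives $t\sim\pi\,D(t)^{-2}$, i.e. $D(t)\sim\sqrt{\pi/t}$ as $t\to 0$. Thus Theorem \ref{main thm 1} is equivalent to the effective estimate $\#\{C\in\mathcal{P}(\mathcal{T}):\delta(C)<D\}=c'\,D^{\alpha}+O(D^{\alpha-\eta'})$ as $D\to\infty$, after which the substitution $D=D(t)$ yields $c\,t^{-\alpha/2}+O(t^{-\alpha/2+\eta})$.

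Next I would set up the group. The packing $\mathcal{P}(\mathcal{T})$ is invariant, with finitely many orbits, under a geometrically finite Kleinian group $\Gamma<\operatorname{PSL}_2(\mathbb{C})$ (generated by inversions in the construction; note these move $\mathbb{H}^2$, which is exactly why $\delta(\gamma C)$ varies across an orbit), whose limit set is $\overline{\mathcal{P}(\mathcal{T})}$, so by \cite{McMullen} its critical exponent is $\delta_{\Gamma}=\alpha$. Since $\alpha\approx 1.3>1$, the hypotheses of Theorem \ref{main thm} hold even though $\Gamma$ carries rank-one cusps coming from the ideal vertices of $\mathcal{T}$. Placing $\mathcal{T}$ in the disk model keeps $\mathcal{P}(\mathcal{T})$ inside a bounded region, and applying a M\"obius map $\sigma$ sending $S$ to a line turns the inversive distance into $\delta(C)=y_c(\sigma C)\cdot\operatorname{Curv}(\sigma C)$. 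The count $\#\{\delta(C)<D\}$ is then an orbit count of circles, and it is governed by the same effective equidistribution of $\operatorname{PSL}_2(\mathbb{R})$-orbits in $\Gamma\backslash\operatorname{PSL}_2(\mathbb{C})$ that underlies Theorem \ref{main thm}; feeding the inversive-distance observable into that machinery gives the main term $c'D^{\alpha}$ with a power-saving error.

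The main obstacle is that inversive-distance counting does not reduce verbatim to the curvature counting of Theorem \ref{main thm}: after normalizing $S$ to a line the effective curvature threshold $D/y_c$ depends on the height of each circle, equivalently the count carries the conformal weight relating the Euclidean and hyperbolic metrics. One must therefore either absorb this position-dependent weight directly into the test function in the effective equidistribution estimate, or decompose the bounded region into pieces on which the weight is essentially constant and sum, carefully controlling the contributions approaching $S$ and the cusps. A secondary difficulty is effectivity bookkeeping: one must verify the regularity hypothesis (Definition \ref{regularity condition}) for the relevant region near both the cusps and the fractal residual set, and confirm that the power-saving error survives the nonlinear change of variables $D(t)\sim\sqrt{\pi/t}$, which is smooth as $D\to\infty$ and hence transports $O(D^{\alpha-\eta'})$ to $O(t^{-\alpha/2+\eta})$ for a suitable $\eta>0$.
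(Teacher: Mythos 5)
Your core reduction coincides with the paper's. The equivalence $\operatorname{Area}_{\operatorname{hyp}}(C)>t\Longleftrightarrow r_C>\operatorname{Im}(e_C)\beta(t)$ with $\beta(t)=\sqrt{t(4\pi+t)}/(2\pi+t)$ is exactly (\ref{equivalent area}); your inversive distance $\delta(C)=y_c/R$ and threshold $D(t)\sim\sqrt{\pi/t}$ is just $1/\beta(t)$, and the main term $(t/\pi)^{-\alpha/2}=D(t)^{\alpha}$ matches. Your first option for the conformal weight --- absorbing it into the data fed to effective equidistribution --- is also what the paper does: the height functions $h^{\pm}_t(z)=-\log(\beta(t)\operatorname{Im}z)$ of (\ref{height function}) build the weight into the range of the $a_s$-integration, and the main term comes out against the measure $(\operatorname{Im}z)^{-\alpha}d\omega_{\Gamma}$ (Proposition \ref{translation between measures}). (A small inaccuracy: the limit set of the relevant group $\Gamma$ is the closure of the full Apollonian packing $\mathcal{P}_0$, not of $\mathcal{P}(\mathcal{T})$; both have dimension $\alpha$, so nothing breaks.)

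The genuine gap is your treatment of the ideal vertices, which you demote to ``secondary\ldots bookkeeping'' and propose to settle by verifying the regularity hypothesis of Definition \ref{regularity condition}. That definition applies to a \emph{fixed bounded} set, whereas here the obstruction is that $\mathcal{T}$ is unbounded in the hyperbolic sense: infinitely many circles accumulate at each ideal vertex, and the truncation that makes the equidistribution argument run must be $t$-dependent, namely $\mathcal{T}(\eta,t)=\{t^{\eta}\le\operatorname{Im}z\le t^{-\eta}\}$ as in (\ref{restricted triangle}). Closing the proof requires two quantitative inputs that no regularity statement supplies, and which the paper treats as the main new content beyond Theorem \ref{main thm}: (i) a power-saving bound on the number of circles with area $>t$ centered outside $\mathcal{T}(\eta,t)$ --- the paper proves $n(\mathcal{P}_0,t)=O(t^{-\alpha/2+\eta(\alpha-1)})$ (Proposition \ref{circles in the cusp 2}) by exploiting the horizontal periodicity of $\mathcal{P}_0$, applying the Euclidean-curvature count (Theorem \ref{effective circle counting thm}) in a fundamental strip and summing $\sum_{k\gtrsim t^{-\eta}}(k^2t)^{-\alpha/2}$ (convergence uses $\alpha>1$), then transporting the vertices $\pm1$ to $\infty$ by an explicit M\"obius element $g_0$; and (ii) effective decay of the weighted measure at the cusps, so the truncated main term differs from the convergent integral $\int_{\mathcal{T}}(\operatorname{Im}z)^{-\alpha}d\omega_{\Gamma}$ by $O(t^{\eta(\alpha-1)})$ --- this rests on the Patterson--Sullivan tail bound $\nu_j(\{|z|\ge N\})\ll N^{-2\alpha+1}$ (Proposition \ref{PS density at cusp}), proved from the parabolic stabilizer of $\infty$ and the confinement of $\Lambda(\Gamma)$ to a strip, and on Proposition \ref{hyperbolic PS density at cusp}. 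Without (i) and (ii) you cannot exclude that the cusp circles contribute at the order of the main term, nor that the truncation error is power-saving, so the proposed argument does not close as written.
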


\begin{figure}[h]
\includegraphics[scale=0.4]{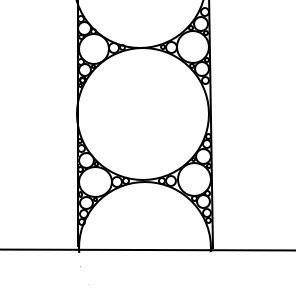}
\caption{Circle packing in ideal hyperbolic triangle}
\end{figure}

The asymptotic formula for this counting problem without a rate was obtained by Oh in \cite{Harmonic analysis}.

\subsection*{On the proof of Theorems \ref{main thm} and \ref{main thm 1} }
Our proof of Theorem \ref{main thm} is built on the approach employed in \cite{Asymptotic}, while providing an effective statement
for each step of their arguments and combining it with the effective equidistribution results in \cite{Matrix coefficients}.

Theorem \ref{main thm 1} does not immediately follow  from Theorem \ref{main thm} since the ideal triangle is not bounded in the hyperbolic space. In order to
prove Theorem \ref{main thm 1},  we need to obtain an effective estimate to the $\alpha$-dimensional Hausdorff measure in hyperbolic metric of neighborhoods of the vertices of the ideal triangle, as well as to give an effective estimate to the number of circles in such neighborhoods.

We refer readers to \cite{Lee&Oh} and \cite{Vinogradov} for effective counting results when $\mathcal{P}$ is an Apollonian circle packing. See also \cite{ParPau} for related counting results.

\subsection*{Acknowledgments} 

I would like to thank my advisor, Hee Oh, for suggesting the problem as a part of my thesis and for continued guidance and support. I would also like to thank Dale Winter for useful discussion, as well as comments on an earlier draft of the paper.

\section{Preliminaries}
\label{Preliminaries}

\subsection{Notations}

 We let
 \begin{align*}
 G&:=\operatorname{PSL}_{2}(\mathbb{C}),\,\,K:=\operatorname{PSU}(2),\\
 M&:=\{m_{\theta}:=\begin{pmatrix} e^{i\theta} & 0 \\ 0 & e^{-i\theta}\end{pmatrix}:\theta \in \mathbb{R}\},\\
 H&:=\operatorname{PSU}(1,1)\cup \begin{pmatrix} 0 & 1 \\-1 & 0\end{pmatrix} \operatorname{PSU}(1,1).
 \end{align*}
 
 Let $\mathbb{H}^3$ be the hyperbolic 3-space. We use  the following coordinates for the upper half space model of $\mathbb{H}^3$:
  \begin{equation*}
  \mathbb{H}^3=\{z+jy:z\in \mathbb{C}, y \in \mathbb{R}_{> 0}\},
  \end{equation*}
  where $j=(0,1)$. 
  
  The geometric boundary $\partial\mathbb{H}^3$ is the extended complex plane $\hat{\mathbb{C}}$. The group of orientation preserving isometries of $\mathbb{H}^3$ is given by $G$. 
 Noting that $G$ acts transitively on $\mathbb{H}^3$  and  $K=\operatorname{Stab}_{G}(j)$, we identify $\mathbb{H}^3$ with $G/K$ via the map $[g] \mapsto gj$.

 Let $T^{1}(\mathbb{H}^3)$ be the unit tangent bundle of $\mathbb{H}^3$, and $X_0\in T^1(\mathbb{H}^3)$ the upward unit normal vector based at $j$. Then  $T^1(\mathbb{H}^3)$ can be identified with $G/M$ via the map $gX_0 \mapsto [g]$ as $M=\operatorname{Stab}_{G}(X_0)$.

Any circle $\mathit{C}$ in $\hat{\mathbb{C}}$ determines a unique totally geodesic plane in $\mathbb{H}^3$, denoted by $\hat{C}$. Let $C^{\dagger}$ be the unit normal bundle of $\hat{C}$. The group $\operatorname{Stab}_{G}(\hat{C})$ acts transitively on both $\hat{C}$ and $C^{\dagger}$. In particular, if we denote by $C_0$ the unit circle centered at the origin,  then $\hat{C_0}$ and $C_0^{\dagger}$  can be identified with $H/H\cap K$ and $H/M$ respectively as $H=\operatorname{Stab}_{G}(\hat{C_0})$.

\subsection{Measures on $\Gamma \backslash G/M$ } Let $\Gamma<G$  be a  geometrically finite Kleinian group. A family of finite measures $\{\mu_x:x\in \mathbb{H}^3\}$ is called a $\Gamma$-invariant conformal density of dimension $\delta_{\mu}>0$, if each $\mu_x$ is a non-zero finite Borel measure on $\partial\mathbb{H}^3$ satisfying for any $x,y \in \mathbb{H}^{3}$, $\xi \in \partial \mathbb{H}^3$ and $\gamma \in \Gamma$,
\begin{equation*}
\gamma_{*}\mu_x=\mu_{\gamma x} \quad \operatorname{and} \quad  \frac{d\mu_y}{d\mu_x}(\xi)=e^{-\delta_{\mu}\beta_{\xi}(y,x)}
\end{equation*}
where $\gamma_{*}\mu_x(F):=\mu_x(\gamma^{-1}(F))$ for any Borel subset $F$ of $\partial \mathbb{H}^3$. Here $\beta_{\xi}(y,x)$ is the Busemann function given by $\beta_{\xi}(y,x)=\lim_{t\to \infty} d(\xi_t,y)-d(\xi_t,x)$, where $\xi_t$ is any geodesic ray tending to $\xi$.

We denote by $\{\nu_{\Gamma,x}:x\in \mathbb{H}^3\}$ (or simply $\{\nu_x:x\in\mathbb{H}^3\}$) the Patterson-Sullivan density (or PS-density), which is a $\Gamma$-invariant conformal density of dimension $\delta_{\Gamma}$ with $\delta_{\Gamma}$  the critical exponent of $\Gamma$. We will denote the critical exponent of $\Gamma$ simply by $\delta$ when there is no room for confusion. Denote by $\{m_x:x \in \mathbb{H}^3\}$  the Lebesgue density, which is a $G$-invariant conformal density on the boundary $\partial \mathbb{H}^3$ of dimension 2.

 Let $\pi: T^{1}(\mathbb{H}^3)\to \mathbb{H}^3$  be the canonical projection map. For $u\in T^{1}(\mathbb{H}^3)$, denote by $u^{\pm}\in \partial\mathbb{H}^3$ the forward and the backward endpoints of the geodesic determined by $u$. The Hopf parametrization $u \mapsto (u^{+},u^{-}, s:=\beta_{u^{-}}(j,\pi(u)))$ gives a homeomorphism between $T^1(\mathbb{H}^3)$ and $(\partial\mathbb{H}^3 \times \partial\mathbb{H}^3)\backslash\{(\xi,\xi):\xi \in \partial\mathbb{H}^3 \} \times \mathbb{R}$. Using the identification of $G/M$ with $T^{1}(\mathbb{H}^3)$, we define the Bowen-Margulis-Sullivan measure $\tilde{m}^{\operatorname{BMS}}_{\Gamma}$ and Burger-Roblin measure $\tilde{m}^{\operatorname{BR}}_{\Gamma}$ on $G/M$ as follows:

\begin{defn}
Set
\begin{enumerate}
\item $d\tilde{m}^{\operatorname{BMS}}_{\Gamma}(g)=e^{\delta \beta_{gX_0^{+}}(j,gj)}e^{\delta \beta_{gX_0^{-}}(j,gj)}d\nu_j(gX_0^{+})d\nu_j(gX_0^{-})ds$;
\item $d\tilde{m}^{\operatorname{BR}}_{\Gamma}(g)=e^{2\beta_{gX_0^{+}}(j,gj)}e^{\delta \beta_{gX_0^{-}}(j,gj)}dm_j(gX_0^{+})d\nu_j(gX_0^{-})ds$.
\end{enumerate}
\end{defn}
Both $\tilde{m}^{\operatorname{BMS}}_{\Gamma}$ and $\tilde{m}^{\operatorname{BR}}_{\Gamma}$ are left $\Gamma$-invariant by the properties of conformal density. They induce locally finite Borel measures on $\Gamma\backslash G/M$, which we will denote by $m^{\operatorname{BMS}}_{\Gamma}$ and $m^{\operatorname{BR}}_{\Gamma}$. When there is no room for confusion, we will write $\tilde{m} ^{\operatorname{BMS}}$, $\tilde{m}^{\operatorname{BR}}$, $m^{\operatorname{BMS}}$ and $m^{\operatorname{BR}}$ for simplicity.

\subsection{Measures on $(\Gamma \cap H) \backslash H/M$ and $\operatorname{sk}_{\Gamma}(\mathcal{P})$}
Denote $\Gamma\cap H$ by $\Gamma_{H}$ for convenience. Using the PS-density $\{\nu_x\}$, 
we construct the measure $\tilde{\mu}^{\operatorname{PS}}_{\Gamma, H}$ 
on $H/M=C^{\dagger}_0$:
\begin{equation*}
d\tilde{\mu}^{\operatorname{PS}}_{\Gamma,H}:=e^{\delta\beta_{(hX_0)^+}(j,hj)}d\nu_j(hX_0^+).
\end{equation*}
Note that  $d\tilde{\mu}^{\operatorname{PS}}_{\Gamma,H}$ 
is left $\Gamma_{H}$-invariant. Hence it induces  a locally finite Borel measure  on $\Gamma_{H} \backslash H/M=\Gamma_{H}\backslash C^{\dagger}_0$. We denote the induced measure by  $d\mu_{\Gamma, H}^{\operatorname{PS}}$ 
When there is no ambiguity about $\Gamma$, we simply write $d\mu^{\operatorname{PS}}_{H}$. 

We introduce a measure associated with a circle packing $\mathcal{P}$:
\begin{defn}[The $\Gamma$-skinning size of $\mathcal{P}$] 
\label{skinning size of circle packing}
For a circle packing $\mathcal{P}$ in $\hat{\mathbb{C}}$ consisting of finitely many $\Gamma$-orbits, define $0 \leq \operatorname{sk}_{\Gamma}(\mathcal{P}) \leq \infty$ as follows:
\begin{equation*}
\operatorname{sk}_{\Gamma}(\mathcal{P}):= \sum_{i \in I} |\mu_{g_{C_i}^{-1}\Gamma g_{C_i},H}^{\operatorname{PS}}|,
\end{equation*}
where $\{C_i: i\in I\}$ is a set of representatives of $\Gamma$-orbits in $\mathcal{P}$ and $g_{C_i}\in G$ is an element such that $g_{C_i}(C_0)=C_i$.
\end{defn}

\begin{rem}

 It is shown in \cite{Sullivan} that $\{\nu_{x}:x\in \mathbb{H}^3\}$ is unique up to scalars.  Using this property, we can verify that Definition \ref{skinning size of circle packing} does not depend on the choice of $g_{C_i}$.


\end{rem}

\begin{thm}[Theorem 2.4 and Lemma 3.2 in \cite{Asymptotic}]
Assume that $\Gamma$ is either convex cocompact or its critical exponent $\delta$ is greater than 1. Let $\mathcal{P}$ be a locally finite circle packing in $\hat{\mathbb{C}}$ invariant under $\Gamma$ with finitely many $\Gamma$-orbits. Then $\operatorname{sk}_{\Gamma}(\mathcal{P})<\infty$.
\end{thm}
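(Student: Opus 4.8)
The plan is to establish finiteness term by term. Because $\mathcal{P}$ has only finitely many $\Gamma$-orbits, the index set $I$ is finite, so it suffices to prove that each summand $|\mu^{\operatorname{PS}}_{g_{C_i}^{-1}\Gamma g_{C_i}, H}|$ is finite. Writing $\Gamma_i := g_{C_i}^{-1}\Gamma g_{C_i}$, conjugation by $g_{C_i}$ preserves geometric finiteness, convex cocompactness, and the critical exponent, and it carries $C_i$ to the base circle $C_0$; hence I may assume throughout that the circle is $C_0$, its plane is $\hat{C_0}$, and the group is $\Gamma_i$ with $\Gamma_{i,H} = \Gamma_i \cap H$. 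Geometrically, $|\mu^{\operatorname{PS}}_{\Gamma_i, H}|$ is the total mass of the skinning measure of the totally geodesic immersed plane $\Gamma_{i,H}\backslash \hat{C_0}$ inside $\Gamma_i\backslash\mathbb{H}^3$, so the whole statement reduces to finiteness of the skinning measure of a single geodesic plane.

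First I would pin down the support. Since $d\tilde\mu^{\operatorname{PS}}_{\Gamma_i,H} = e^{\delta\beta_{(hX_0)^+}(j,hj)}d\nu_j((hX_0)^+) = d\nu_{hj}((hX_0)^+)$ is a conformal reweighting of the Patterson--Sullivan density $\nu_j$, which is supported on the limit set $\Lambda(\Gamma_i)$, the measure $\tilde\mu^{\operatorname{PS}}_{\Gamma_i,H}$ charges only those $hM \in H/M = C_0^{\dagger}$ whose forward normal endpoint $(hX_0)^+$ lies in $\Lambda(\Gamma_i)$. After passing to $\Gamma_{i,H}\backslash C_0^{\dagger}$, the support thus consists of normal vectors whose forward geodesic returns to the convex core of $\Gamma_i\backslash\mathbb{H}^3$.

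When $\Gamma_i$ is convex cocompact the convex core is compact, and the non-wandering set of the geodesic flow on $\Gamma_i\backslash G/M$ is compact. The recurrence condition above then confines the support of the quotient skinning measure to the (compact) portion of the unit normal bundle lying over a bounded neighborhood of the convex core. As $\mu^{\operatorname{PS}}_{\Gamma_i,H}$ is a locally finite Borel measure, a compactly supported such measure has finite mass; this settles the convex cocompact case. (Note that if $\Gamma_{i,H}$ is itself cocompact then $\Gamma_{i,H}\backslash C_0^{\dagger}$ is already compact and finiteness is immediate; the content is that even when $\Gamma_{i,H}$ has funnels the support stays bounded.)

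In the case $\delta > 1$ the group may have cusps, and the support of the skinning measure can be noncompact, escaping into them. I would decompose $\Gamma_{i,H}\backslash C_0^{\dagger}$ into a compact core piece, finite by the preceding argument, together with finitely many cuspidal pieces, one per parabolic fixed point of $\Gamma_i$ lying on $C_0$ (these are exactly the cusps of the surface $\Gamma_{i,H}\backslash\hat{C_0}$). The main work, and the principal obstacle, is to bound the skinning mass of each cuspidal piece. For the cusp analysis one may further conjugate so that the parabolic point is at $\infty$, and here a circle through $\infty$ is stabilized only by a rank one subgroup of the (possibly rank two) cusp group, so the excursion may be parametrized by the height $T\to\infty$ of the foot point in the horoball. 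A direct computation shows that the conformal density $e^{-\delta\beta_{\xi}(\mathrm{foot},j)}$ grows like $T^{\delta}$, while the Patterson--Sullivan mass of the part of the cusp above height $T$ decays at the rate dictated by the global measure formula of Stratmann--Velani near a parabolic point; the resulting contribution is comparable to $\int^{\infty} T^{-\delta}\,dT$, which converges precisely when $\delta > 1$. Summing the finitely many cuspidal contributions then yields $|\mu^{\operatorname{PS}}_{\Gamma_i,H}| < \infty$. Matching the decay of the Patterson--Sullivan density to the rate at which the normal bundle spreads inside a horoball is the step I expect to require the most care; I note that a rank two cusp already forces $\delta > 1$, so the hypothesis $\delta > 1$ is exactly what is needed to handle every cusp uniformly.
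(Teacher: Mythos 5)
This statement is not proved in the paper at all: it is imported verbatim from Oh--Shah, and the citation header itself names the two ingredients of their proof, Theorem 2.4 \emph{and Lemma 3.2} of \cite{Asymptotic} --- the latter being the equivalence of local finiteness of $\Gamma(C)$ with properness of the map $\Gamma_H\backslash \hat{C_0}\to \Gamma\backslash \mathbb{H}^3$ (the paper invokes exactly this in its proof of Proposition 2.6). Measured against that route, your proposal has a genuine gap: nowhere do you use the hypothesis that $\mathcal{P}$ is locally finite, and the theorem is false without it. Concretely, let $\Gamma$ be quasi-Fuchsian (hence convex cocompact) with limit set a Jordan curve $J$, and let $C_0=\mathbb{R}\cup\{\infty\}$ be a circle crossing $J$ with trivial stabilizer $\Gamma_H=\{e\}$. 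Then the skinning mass is
\begin{equation*}
|\mu^{\operatorname{PS}}_{H}|=\int_{J\setminus \mathbb{R}}\Bigl(\tfrac{|z|^2+1}{2|\operatorname{Im}z|}\Bigr)^{\delta}\,d\nu_j(z),
\end{equation*}
and near a crossing point $\xi_0\in J\cap\mathbb{R}$ the shadow lemma gives $\nu_j\bigl(B(\xi_0,2^{-K})\bigr)\asymp 2^{-K\delta}$ while $|\operatorname{Im}z|\le |z-\xi_0|$, so the integral over $B(\xi_0,2^{-K})$ is bounded below by a positive constant independent of $K$; since $\nu_j$ is non-atomic this forces divergence. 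So for a proof to be correct, local finiteness must enter somewhere, and your argument would "prove" finiteness in this example too.

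The precise non sequitur is in your convex cocompact step. From $v^{+}\in\Lambda(\Gamma_i)$ you conclude that the support lies ``over a bounded neighborhood of the convex core,'' and then treat that as a compact subset of $\Gamma_{i,H}\backslash C_0^{\dagger}$. But the relevant quotient is by $\Gamma_{i,H}$, not by $\Gamma_i$: the set of points of $\Gamma_{i,H}\backslash \hat{C_0}$ whose image in $\Gamma_i\backslash\mathbb{H}^3$ lies in a bounded neighborhood of the core is compact precisely when the immersion $\Gamma_{i,H}\backslash \hat{C_0}\to \Gamma_i\backslash\mathbb{H}^3$ is proper --- and properness is exactly what local finiteness buys (Lemma 3.2 of \cite{Asymptotic}). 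In the quasi-Fuchsian example above the basepoints of support vectors do stay within bounded distance of the convex hull of $J$ (points at Euclidean height comparable to their distance from $\xi_0$ lie a bounded hyperbolic distance from any geodesic ending at $\xi_0$), yet modulo the trivial stabilizer they form a noncompact set of infinite mass; so ``recurrence to the compact core'' constrains the forward ray, not the basepoint, and cannot by itself yield compactness of the support. The same omission propagates into your $\delta>1$ case: the ``compact core piece'' again needs properness, and the noncompact excursions of the support are indexed by $\Gamma_{i,H}$-orbits of \emph{all} parabolic fixed points of $\Gamma_i$ lying on $C_0$ --- including those whose parabolic translation is transverse to $C_0$, which are not cusps of the surface $\Gamma_{i,H}\backslash\hat{C_0}$ at all (and whose $\Gamma_{i,H}$-orbits must additionally be shown to be finite in number, again via properness) --- so the decomposition ``compact part plus cusps of the surface'' does not capture the support. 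Your heuristic rate $\int^{\infty}T^{-\delta}\,dT$ for a cuspidal excursion is the right answer, but as written the argument both misidentifies which excursions occur and omits the one hypothesis that makes any of the compactness claims true.
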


\begin{prop}
For any circle $C$, if $\Gamma (C)$ is a locally finite circle packing consisting of infinitely many circles, then $\operatorname{sk}_{\Gamma}(\Gamma (C))>0$.
\end{prop}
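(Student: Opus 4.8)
\emph{Plan.} Since $\Gamma(C)$ is a single $\Gamma$-orbit, the sum defining the $\Gamma$-skinning size collapses to one term. Writing $\Gamma_1:=g_C^{-1}\Gamma g_C$, we have $\operatorname{sk}_{\Gamma}(\Gamma(C))=|\mu^{\operatorname{PS}}_{\Gamma_1,H}|$, and since $g_C^{-1}(C)=C_0$ the orbit $\Gamma_1(C_0)=g_C^{-1}(\Gamma(C))$ is again a locally finite packing with infinitely many circles, while $\Lambda(\Gamma_1)=g_C^{-1}(\Lambda(\Gamma))$. Thus it suffices to prove the following reduced statement: if $\Gamma_1$ is non-elementary and $\Gamma_1(C_0)$ is infinite, then the locally finite measure $\mu^{\operatorname{PS}}_{\Gamma_1,H}$ on $(\Gamma_1\cap H)\backslash H/M$ has positive total mass. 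As $\mu^{\operatorname{PS}}_{\Gamma_1,H}$ is induced from the $(\Gamma_1\cap H)$-invariant measure $\tilde\mu^{\operatorname{PS}}_{\Gamma_1,H}$ on $H/M$, it is enough to show that $\tilde\mu^{\operatorname{PS}}_{\Gamma_1,H}$ is not the zero measure.

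The first main step is to identify exactly when $\tilde\mu^{\operatorname{PS}}_{\Gamma_1,H}$ vanishes. Recall $H/M=C_0^{\dagger}$, and by definition $d\tilde\mu^{\operatorname{PS}}_{\Gamma_1,H}=e^{\delta\beta_{(hX_0)^+}(j,hj)}\,d\nu_j((hX_0)^+)$, so $\tilde\mu^{\operatorname{PS}}_{\Gamma_1,H}$ is, up to the strictly positive density $e^{\delta\beta}$, the transport of $\nu_j$ to $C_0^{\dagger}$ along the forward-endpoint map $\operatorname{vis}\colon hM\mapsto (hX_0)^+$. I would next verify the geometric fact that the geodesics normal to the plane $\hat{C_0}$ meet $\partial\mathbb H^3$ precisely in pairs of points inverse to one another across $C_0$, so that $\operatorname{vis}(C_0^{\dagger})=\hat{\mathbb C}\setminus C_0$ (the two components of $H$ supplying both orientations). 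Consequently $\tilde\mu^{\operatorname{PS}}_{\Gamma_1,H}$ only registers the mass $\nu_j(\hat{\mathbb C}\setminus C_0)$, and since the Patterson–Sullivan density satisfies $\operatorname{supp}(\nu_j)=\Lambda(\Gamma_1)$ one gets the clean equivalence
\[
\tilde\mu^{\operatorname{PS}}_{\Gamma_1,H}\neq 0 \iff \nu_j(\hat{\mathbb C}\setminus C_0)>0 \iff \Lambda(\Gamma_1)\not\subseteq C_0 .
\]
Concretely, if some $\xi_0\in\Lambda(\Gamma_1)$ lies off $C_0$, choose a neighbourhood $U\ni\xi_0$ with $\overline U\cap C_0=\emptyset$; then $\nu_j(U)>0$ and $\operatorname{vis}^{-1}(U)$ is a nonempty open set of positive $\tilde\mu^{\operatorname{PS}}_{\Gamma_1,H}$-measure.

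It then remains to rule out $\Lambda(\Gamma_1)\subseteq C_0$, and here I would argue by contradiction using non-elementarity. Then $\Lambda(\Gamma_1)$ contains at least three points and is $\Gamma_1$-invariant. For $\gamma\in\Gamma_1$ the circle $\gamma C_0$ contains $\gamma\Lambda(\Gamma_1)=\Lambda(\Gamma_1)\subseteq C_0$; as two distinct circles meet in at most two points, $\gamma C_0$ and $C_0$ sharing at least three points forces $\gamma C_0=C_0$. Hence every $\gamma$ stabilizes $C_0$ and $\Gamma_1(C_0)=\{C_0\}$ is finite, contradicting the hypothesis. Therefore $\Lambda(\Gamma_1)\not\subseteq C_0$, so $\tilde\mu^{\operatorname{PS}}_{\Gamma_1,H}\neq 0$; being $(\Gamma_1\cap H)$-invariant and nonzero, it induces a measure of positive total mass on the quotient, giving $\operatorname{sk}_{\Gamma}(\Gamma(C))>0$.

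The step I expect to carry the genuine content, rather than bookkeeping, is the identification $\operatorname{vis}(C_0^{\dagger})=\hat{\mathbb C}\setminus C_0$ combined with the identity $\operatorname{supp}(\nu_j)=\Lambda(\Gamma_1)$: these are what convert the abstract nonvanishing of a Patterson–Sullivan integral into the concrete condition $\Lambda(\Gamma_1)\not\subseteq C_0$. The conjugation reduction and the three-point rigidity argument are then routine.
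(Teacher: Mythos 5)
Your proof is correct, but it takes a genuinely different route from the paper's. The paper disposes of this proposition in three citations: infinitude of the orbit gives $[\Gamma:\Gamma\cap\operatorname{Stab}_G(C)]=\infty$ (Lemma 3.1 of Oh--Shah), local finiteness gives properness of the map $\Gamma\cap\operatorname{Stab}_G(C)\backslash\hat{C}\to\Gamma\backslash\mathbb{H}^3$ (Lemma 3.2 of Oh--Shah), and positivity of the skinning measure then follows from Proposition 6.7 of the Oh--Shah equidistribution paper, whose hypotheses are exactly these two facts. You instead reprove the relevant content of that cited proposition from scratch: the identification $\operatorname{vis}(C_0^{\dagger})=\hat{\mathbb C}\setminus C_0$ (endpoints of normal geodesics are inverse pairs across $C_0$), the fact that $\operatorname{supp}(\nu_j)\supseteq\Lambda(\Gamma_1)$ for a conformal density of a non-elementary group, and the three-point rigidity argument ruling out $\Lambda(\Gamma_1)\subseteq C_0$ when the orbit is infinite. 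Each of these steps is sound, and the conjugation reduction and the passage from a nonzero invariant measure on $H/M$ to a positive quotient measure are handled correctly. What your approach buys is self-containedness and, interestingly, a slightly stronger statement: nowhere do you use local finiteness of $\Gamma(C)$, so your argument shows positivity for \emph{any} infinite orbit of a non-elementary group, whereas the paper's route feeds local finiteness (via properness) into the hypotheses of the proposition it cites. What the paper's route buys is brevity and consistency with the framework it borrows wholesale from \cite{Asymptotic} and \cite{Equidistribution}, where the properness hypothesis is also needed later for finiteness of the skinning size and for the effective equidistribution input.
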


\begin{proof}
As $\Gamma(C)$ consists of infinitely many circles, we have $[\Gamma: \Gamma\cap \operatorname{Stab}_{G}(C)]=\infty$ (Lemma 3.1 in \cite{Asymptotic}). The local finiteness of $\Gamma(C)$ implies that the map $\Gamma\cap \operatorname{Stab}_{G}(C)\backslash \hat{C}\to \Gamma\backslash \mathbb{H}^3$ is proper (Lemma 3.2 in \cite{Asymptotic}). The proposition follows from Proposition 6.7 in \cite{Equidistribution}.
\end{proof}

\section{Effective counting for general circle packings}
Throughout this section, we set $\Gamma<\operatorname{PSL}_2(\mathbb{C})$  to be a torsion-free non-elementary geometrically finite Kleinian group. Let $\mathcal{P}$ be a locally finite circle packing consisting of finitely many $\Gamma$-orbits.

Let
\begin{equation*} 
N=\{n_z:=\begin{pmatrix} 1 & z\\ 0 & 1\end{pmatrix}:z\in \mathbb{C}\}\;\;\text{and}\;\;N^{-}=\{n^{-}_{w}:=\begin{pmatrix} 1 & 0\\w & 1\end{pmatrix}:w\in \mathbb{C}\}.
\end{equation*}
For a subset $E\subset \mathbb{C}$, set
\begin{equation*}
N_{E}:=\left \{ n_z : z\in E \right \}.
\end{equation*}
Let 
\begin{equation*}
A=\{a_t:=\begin{pmatrix} e^{\frac{t}{2}} & 0\\ 0 & e^{-\frac{t}{2}}\end{pmatrix}:t\in \mathbb{R}\}\;\;\text{and}\;\; A^{+}=\{a_t:t\geq 0\}.
\end{equation*}
For $s>0$, set
\begin{equation*}
A^+_s:= \{a_t: 0\leq t \leq s\}\;\;
\text{ and }\;\;
A^-_s:=\{a_{-t}: 0\leq t \leq s\}.
\end{equation*}

Let $\mathit{E} \subset \mathbb{C}$ be a bounded Borel set.  Our goal in this section is to give an effective estimate of the following counting function:
\begin{equation*}
N_T(\mathcal{P},\mathit{E}):=\#\{C\in \mathcal{P}: C \cap \mathit{E} \neq \emptyset, \;\operatorname{Curv}(C) <T \},
\end{equation*}
where $\operatorname{Curv}(C)$ is the Euclidean curvature of $C$.

\subsection{Reformulation into orbit counting problem}

We built a relation between $N_T(\mathcal{P},\mathit{E})$
and a counting function of a $\Gamma$-orbit
 on $H\backslash G$ and prove Proposition \ref{reformulation 3}, which is a more
 precise version of Proposition 3.7 in \cite{Asymptotic}.
To obtain an effective estimate of $N_T(\mathcal{P},\mathit{E})$, it is important to understand the independence of $m_0$ from $\epsilon$
and the size of $T$ in terms of $\epsilon$, where $m_0$, $T$ and $\epsilon$ are described as Proposition \ref{reformulation 3}.

Fix a left invariant Riemannian metric on $G$, which induces
the hyperbolic metric on $G/K=\mathbb{H}^3$. 
 For any $\epsilon>0$, set $U_{\epsilon}$ to be the symmetric $\epsilon$-neighborhood of $e$ in $G$.  For any subset $W\subset G$, denote $W_{\epsilon}=U_{\epsilon}\cap W$. Define 
\begin{equation}
\label{neighborhood of a set}
 E^{+}_{\epsilon}:=\bigcup_{u\in U_\epsilon} uE \quad \text{and} \quad E^{-}_{\epsilon}:=\bigcap_{u\in U_{\epsilon}}uE.
\end{equation}
Note that there exists a constant $c_0>0$, independent of $E$,
 such that for any small $\epsilon>0$, $E^{+}_{\epsilon}$ contains the $c_0\epsilon$-neighborhood of $E$ in the Euclidean metric. We fix this constant $c_0$ in the following.

\begin{lem}
\label{reformulation 1}
For any $0<\epsilon <\frac{1}{c_0}$ and for any $T>\frac{1}{c_0\epsilon}$, we have 
\begin{equation*}
N_T(\mathcal{P},E) \leq \# \{C \in \mathcal{P}: \hat{C} \cap N_{E^{+}_{\epsilon}} A^{-}_{\log T}j \neq \emptyset\}.
\end{equation*} 
\end{lem}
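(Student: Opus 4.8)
The plan is to make both sides of the claimed inequality completely explicit as subsets of $\mathbb{H}^3$, and then to exhibit, for each circle $C$ contributing to $N_T(\mathcal{P},E)$, one explicit point of $\hat C$ lying in the region $N_{E^{+}_{\epsilon}}A^{-}_{\log T}j$. Since the right-hand side counts \emph{all} circles $C\in\mathcal P$ whose associated plane meets that region, producing such a point for each circle counted on the left immediately yields the inequality of cardinalities.

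First I would compute the region $N_{E^{+}_{\epsilon}}A^{-}_{\log T}j$ directly. Using the quaternionic action on the upper half-space model, $a_{-t}$ fixes the vertical axis and sends $j$ to $je^{-t}$, while $n_z$ acts by the horizontal translation $q\mapsto q+z$; hence $A^{-}_{\log T}j=\{jy:1/T\le y\le1\}$ and
\[
N_{E^{+}_{\epsilon}}A^{-}_{\log T}j=\{\,z+jy : z\in E^{+}_{\epsilon},\ 1/T\le y\le1\,\},
\]
a ``box'' sitting over $E^{+}_{\epsilon}$ between heights $1/T$ and $1$. On the other hand, for a genuine circle $C$ of center $c$ and radius $r$, the totally geodesic plane $\hat C$ is the Euclidean hemisphere $\{z+jy:|z-c|^2+y^2=r^2\}$, and $\operatorname{Curv}(C)<T$ is exactly $r>1/T$.

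Next, given $C$ with $C\cap E\neq\emptyset$ and $r>1/T$, I would pick $p\in C\cap E$ and move from $p$ toward the center inside $\mathbb{C}$: set $z_\delta=p+\delta\,(c-p)/r$ for $0\le\delta\le r$, so that $|z_\delta-c|=r-\delta$ and the point $z_\delta+jy_\delta$ of $\hat C$ has height $y_\delta=\sqrt{\delta(2r-\delta)}$, an increasing function of $\delta$ running from $0$ at $\delta=0$ to $r$ at $\delta=r$. Because $p\in E$ and $E^{+}_{\epsilon}$ contains the $c_0\epsilon$-neighborhood of $E$, any $z_\delta$ with $\delta\le c_0\epsilon$ lies in $E^{+}_{\epsilon}$; it then remains to choose $\delta\le c_0\epsilon$ with $y_\delta\in[1/T,1]$. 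Here the hypotheses enter: $\epsilon<1/c_0$ gives $c_0\epsilon<1$, and $T>1/(c_0\epsilon)$ gives $1/T<c_0\epsilon$. If $r\ge c_0\epsilon$ then $y_{c_0\epsilon}\ge c_0\epsilon>1/T$, so by continuity some $\delta\le c_0\epsilon$ achieves $y_\delta=1/T\le 1$; if instead $1/T<r<c_0\epsilon$, I take the apex $\delta=r$, giving $z_r=c$ with $y_r=r\in(1/T,c_0\epsilon)\subset(1/T,1)$ and $|c-p|=r<c_0\epsilon$, so $c\in E^{+}_{\epsilon}$. In both cases $z_\delta+jy_\delta\in\hat C\cap N_{E^{+}_{\epsilon}}A^{-}_{\log T}j$. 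Lines (circles of curvature $0$) are even easier, since $\hat C$ is then a vertical half-plane over the line and any $p\in C\cap E$ gives $p+j/T$ in the box.

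The hard part will be this last verification: guaranteeing that the two constraints ``height in $[1/T,1]$'' and ``shadow within $c_0\epsilon$ of $p$'' can be met simultaneously for every admissible radius. This is precisely what forces the case split on whether $r\ge c_0\epsilon$, and it is also where the quantitative hypotheses are used — $\epsilon<1/c_0$ keeps small hemispheres below height $1$, while $T>1/(c_0\epsilon)$ puts the floor height $1/T$ within horizontal reach $c_0\epsilon$ of the boundary point $p$. Everything else is a routine identification of the homogeneous-space region with an explicit region of $\mathbb{H}^3$ together with the elementary hemisphere geometry above.
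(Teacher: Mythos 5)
Your proposal is correct and follows essentially the same route as the paper's proof: both describe $N_{E^{+}_{\epsilon}}A^{-}_{\log T}j$ as the explicit box $\{z+jy : z\in E^{+}_{\epsilon},\, T^{-1}\le y\le 1\}$, split into the cases $r<c_0\epsilon$ (use the apex of the hemisphere) and $r\ge c_0\epsilon$ (slide from a boundary point $p\in C\cap E$ toward the center until the height enters $[T^{-1},1]$), and invoke the hypotheses $\epsilon<1/c_0$ and $T>1/(c_0\epsilon)$ at exactly the same places. The only (harmless) differences are that you phrase the large-radius case via the intermediate value theorem for the height function rather than via a point on the geodesic from $w+rj$ to $z$, and that you explicitly treat lines, which the paper's proof leaves implicit.
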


\begin{proof}
It suffices to prove that if $C$ is a circle in $\mathcal{P}$ intersecting $E$  with $\operatorname{Curv}(C)<T$ for some $T>\frac{1}{c_0\epsilon}$,
then the intersection $\hat{C}\cap N_{E^{+}_{\epsilon}} A^{-}_{\log T}j$ is non-empty.

Observe that 
\begin{equation*}
N_{E^{+}_{\epsilon}} A^{-}_{\log T}j=\{z+lj: z\in E^{+}_{\epsilon}, T^{-1} \leq l \leq 1 \}.
\end{equation*}
Let $C\in \mathcal{P}$ be such that $C\cap E\ne \emptyset$ and $\operatorname{Curv}(C)<T$ for some $T>\frac{1}{c_0\epsilon}$.
Set $z_0$ and $r_C$ to be the center and the radius of $C$ respectively. 

If $r_C \leq c_0\epsilon$, then $z_0\in E^{+}_{\epsilon}$ and $z_0+r_Cj$ belongs to $\hat{C}\cap N_{E^{+}_{\epsilon}} A^{-}_{\log T}j$.

Now suppose $r_C>c_0\epsilon$. Choosing $z \in C \cap E$, set $w=(r_C-c_0\epsilon)\frac{z-z_0}{|z-z_0|}+z_0$. Then $w \in E^{+}_{\epsilon}$. Let $r>0$ be such that $w+rj$ lies in $\hat{C}$. We have
\begin{equation*} 
r=\sqrt{r_C^2-|w-z_0|^2}>T^{-1}.
\end{equation*}
 As a result, any point 
lying on the geodesic between $w+rj$ and $z$ and of Euclidean height greater than $T^{-1}$ and less than $1$ lies in $N_{E^{+}_{\epsilon}} A^{-}_{\log T}j$. This proves the claim.
\end{proof}

\begin{lem}
 \label{reformulation 2}
If $E$ is connected, then there exists a positive integer $m_0$, depending on $E$, such that for any  $T>0$,
\begin{equation*}
N_T(\mathcal{P},E) \geq \# \{C\in \mathcal{P}: \hat{C} \cap  N_{E} A^{-}_{\log T}j \neq \emptyset\}-m_0.
\end{equation*}
\end{lem}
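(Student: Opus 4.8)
The plan is to establish the contrapositive bound
\[
\#\{C\in\mathcal P : \hat C\cap N_E A^-_{\log T}j\neq\emptyset\}\le N_T(\mathcal P,E)+m_0,
\]
by showing that every circle contributing to the left-hand side but failing to be counted by $N_T(\mathcal P,E)$ belongs to one fixed finite family, whose cardinality $m_0$ depends on $E$ but not on $T$. As in the proof of Lemma \ref{reformulation 1}, I would first record the explicit description $N_E A^-_{\log T}j=\{z+lj:z\in E,\ T^{-1}\le l\le 1\}$ and write $z_0,r_C$ for the center and radius of $C$, so that $\hat C=\{w+yj:|w-z_0|^2+y^2=r_C^2\}$ is the hemisphere sitting over the open disk $D_C$ bounded by $C$.

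Suppose $\hat C$ meets the box, say $z+lj\in\hat C$ with $z\in E$ and $T^{-1}\le l\le 1$. Then $|z-z_0|^2=r_C^2-l^2<r_C^2$, so $z$ lies in $D_C$, and $r_C\ge l\ge T^{-1}$. If in addition $C\cap E\neq\emptyset$, then $r_C>T^{-1}$ (i.e. $\operatorname{Curv}(C)<T$, so $C$ is counted by $N_T$) except in the single degenerate configuration $r_C=T^{-1}$, which forces $z=z_0\in E$ and $l=r_C$; this boundary-curvature case is non-generic and I would treat it as part of the exceptional family. The main point is therefore to control the circles with $C\cap E=\emptyset$. Here I would invoke connectedness of $E$: since $C$ is disjoint from the connected set $E$ while $E$ meets $D_C$ through the point $z$, the whole of $E$ must lie in $D_C$; that is, every such circle \emph{encloses} $E$.

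It remains to bound, uniformly in $T$, the number of enclosing circles whose hemisphere meets the box. First, $E\subset D_C$ gives $\operatorname{diam}(E)\le 2r_C$, so $\operatorname{Curv}(C)=r_C^{-1}\le 2/\operatorname{diam}(E)$ is bounded above (here I use $\operatorname{diam}(E)>0$, which holds for the regular sets appearing in the applications). Second, the condition $l\le 1$ yields $r_C^2-|z-z_0|^2\le 1$, so the Euclidean distance from $z$ to $C$ is $r_C-|z-z_0|=(r_C^2-|z-z_0|^2)/(r_C+|z-z_0|)\le r_C^{-1}\le 2/\operatorname{diam}(E)$; since $z\in E$ lies in a fixed bounded set, $C$ must meet a fixed ball $B$ depending only on $E$. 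Thus every enclosing circle under consideration has curvature at most $2/\operatorname{diam}(E)$ and meets $B$, whence by the local finiteness of $\mathcal P$ there are only finitely many of them. I would then take $m_0$ to be this number (together with the finitely many degenerate boundary-curvature circles).

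The main obstacle is precisely this last uniform-in-$T$ finiteness: a priori the enclosing circles could proliferate as $T\to\infty$, and the crux is that the two constraints $E\subset D_C$ and ``$\hat C$ descends to height $\le 1$ over $E$'' together confine each such circle to a fixed compact range of curvatures and a fixed bounded location, after which local finiteness closes the argument. Connectedness of $E$ is indispensable here, since it is exactly what upgrades ``$E$ meets $D_C$'' to ``$E\subset D_C$''; without it, $E$ could straddle infinitely many circles whose hemispheres dip into the box while the circles themselves avoid $E$, and no constant $m_0$ would suffice.
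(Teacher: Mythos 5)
Your proof is essentially the paper's own argument. The paper reduces the lemma to showing that the set $\cup_T W_T$ of positive-curvature circles $C$ with $\hat C \cap N_E A^-_{\log T}j \neq \emptyset$ but $C \cap E = \emptyset$ is finite, and proves this by exactly your two estimates: connectedness of $E$ forces $E$ to lie inside the open disk bounded by $C$ (so $r_C \geq d_E/2$), and the height bound $s \leq 1$ gives $d(C,z) = r_C - \sqrt{r_C^2 - s^2} \leq s^2/r_C \leq 2/d_E$, so every such circle meets a fixed bounded neighborhood of $E$ and local finiteness concludes, uniformly in $T$. (The paper additionally sets aside the finitely many lines of $\mathcal{P}$ meeting $E$; your implicit restriction to proper circles is harmless, since a line whose vertical plane meets the box necessarily meets $E$ and has curvature $0 < T$, hence causes no discrepancy.)

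The one point where you go beyond the paper is the boundary case $r_C = T^{-1}$, $z = z_0 \in E$, and there your patch is not rigorous: these circles (center in $E$, radius exactly $T^{-1}$) form a family that changes with $T$, and their number is in general \emph{not} bounded uniformly in $T$. For instance, in a bounded integral Apollonian packing the number of circles of curvature exactly $n$ is unbounded along a subsequence of integers $n$, since the total count up to $T$ grows like $T^{\delta}$ with $\delta > 1$ while only $T$ integer curvature values are available; taking $E$ to contain the packing, all of these contribute to the right-hand count at $T=n$ but none to $N_T$. So "treat it as part of the exceptional family" cannot yield an $m_0$ independent of $T$. You should know, however, that the paper's proof has the identical blind spot: its reduction to $\cup_T W_T$ implicitly assumes that $\hat C$ meeting the box together with $C \cap E \neq \emptyset$ places $C$ in $N_T(\mathcal{P},E)$, whereas this only gives $\operatorname{Curv}(C) \leq T$ rather than the strict inequality in the definition of $N_T$. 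The discrepancy occurs only for the countably many $T$ equal to a curvature occurring in the packing, and it vanishes if $N_T$ is defined with non-strict inequality (and is in any case absorbed by the error terms in the downstream applications). In substance your argument matches the paper's proof, and is in fact more candid about this edge case and about the implicit requirement $d_E > 0$.
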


\begin{proof} The local finiteness condition on $\mathcal{P}$ implies that
there are only finitely many lines in $\mathcal{P}$  intersecting $E$.
Denote by $W_T$ the set of all circles $C \in \mathcal{P}$ of positive curvature such that $\hat{C} \cap  N_{E} A^{-}_{\log T}j \neq \emptyset$ and $C \cap E =\emptyset$. It suffices to prove that $\cup_T W_T$ consists of finitely many circles. 

We claim  that any circle in $\cup_T W_T$ must have radius bigger than $d_E/2$, as well as intersect the $2/d_E$-neighborhood of $E$ where $d_E$ is the diameter of $E$.

If $C\in \cup_T W_T$, it follows from the connectedness of $E$  that $E$ is contained in the open disc enclosed by $C$, and hence the radius of $C$ is at least $d_E/2$.
Picking an arbitrary point $z+sj \in \hat{C}\cap N_{E} A^{-}_{\log T}j$, we let $w \in C$ be such that  $d(w,z)= d(C,z)$. Then
\begin{equation*}
d(w,z)=r_{C}-\sqrt{r_{C}^2-s^2} \leq {s^2}/{r_{C}}.
\end{equation*}
As mentioned above, $r_{C}>{d_E}/{2}$ and hence $d(w,z)\leq {2}/{d_{E}}$. Therefore $C$ intersects the ${2}/{d_E}$-neighborhood of $E$ non trivially, proving the claim.
Now the proposition follows from the assumption that $\mathcal{P}$ is locally finite.
\end{proof}

\begin{defn}[Definition of $B_T(E)$]
For $E \subset \mathbb{C}$ and $T>1$, define the subset $B_T(E)$ of $H\backslash G$ to be the image of the set
\begin{equation*}
KA^{+}_{\log T} N_{-E} =\{ka_tn_{-z}\in G: k\in K, 0 \leq t \leq \log T, z\in E \}
\end{equation*}
under the canonical projection $G \to H\backslash G$.
\end{defn}

Lemmas \ref{reformulation 1} and  \ref{reformulation 2} yield the following estimate:
\begin{prop}[cf. Proposition 3.7 in \cite{Asymptotic}]
\label{reformulation 3}
Suppose $E$   is connected. We have, for all small $0<\epsilon<\frac{1}{c_0}$ and for any $T>\frac{1}{c_0\epsilon}$, 
\begin{equation*}
\#([e]\Gamma \cap B_T(E)) -m_0 \leq N_T(\Gamma(C_0),E) \leq \# ([e]\Gamma \cap B_T(E^{+}_{\epsilon})),
\end{equation*}
where $m_0$ is a positive integer only depending on $E$.
\end{prop}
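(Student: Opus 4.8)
The plan is to deduce the Proposition directly from Lemmas \ref{reformulation 1} and \ref{reformulation 2} by establishing an exact dictionary between the geometric counts appearing there and the orbit count $\#([e]\Gamma\cap B_T(\cdot))$. Specialize both lemmas to the single orbit $\mathcal{P}=\Gamma(C_0)$. Lemma \ref{reformulation 1} then gives the upper bound $N_T(\Gamma(C_0),E)\le \#\{C\in\Gamma(C_0):\hat{C}\cap N_{E^{+}_{\epsilon}}A^{-}_{\log T}j\ne\emptyset\}$ for $T>1/(c_0\epsilon)$, and Lemma \ref{reformulation 2}, using that $E$ is connected, gives the lower bound $N_T(\Gamma(C_0),E)\ge \#\{C\in\Gamma(C_0):\hat{C}\cap N_E A^{-}_{\log T}j\ne\emptyset\}-m_0$. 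It remains only to identify each of these geometric counts with an orbit count in $H\backslash G$.

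First I would set up the parametrization of the orbit $\Gamma(C_0)$. Since $H=\operatorname{Stab}_G(\hat{C_0})$ and $H$ stabilizes the circle $C_0$ itself, two elements satisfy $\gamma_1 C_0=\gamma_2 C_0$ precisely when $\gamma_1^{-1}\gamma_2\in\Gamma_H$; hence $C=\gamma C_0\mapsto H\gamma^{-1}$ is a well-defined bijection from $\Gamma(C_0)$ onto $[e]\Gamma=\{H\gamma^{-1}:\gamma\in\Gamma\}\subset H\backslash G$. Using the identification $\mathbb{H}^3=G/K$ with $K=\operatorname{Stab}_G(j)$, the plane attached to $C$ is $\hat{C}=\gamma\hat{C_0}=\gamma Hj$.

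The key step is the exact equivalence, for any bounded $F\subset\mathbb{C}$,
\[
\hat{C}\cap N_F A^{-}_{\log T}j\ne\emptyset \iff H\gamma^{-1}\in B_T(F).
\]
I would prove it by unwinding both sides into a membership statement in $G$. On the left, $\gamma Hj$ meets $N_F A^{-}_{\log T}j=\{n_z a_{-t}j:z\in F,\ 0\le t\le\log T\}$ iff $\gamma hj=n_z a_{-t}j$ for some $h\in H$ and admissible $z,t$; since $K$ fixes $j$, this says exactly $\gamma\in N_F A^{-}_{\log T}KH$. On the right, $H\gamma^{-1}\in B_T(F)$ means $\gamma^{-1}\in HKA^{+}_{\log T}N_{-F}$, and taking inverses (so that $n_{-z}\mapsto n_z$, $a_t\mapsto a_{-t}$, and the order of the $K$ and $H$ factors reverses) yields precisely $\gamma\in N_F A^{-}_{\log T}KH$. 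Thus the two sides are the same condition on $\gamma$, and since this condition depends only on the coset $H\gamma^{-1}$ it is compatible with the bijection above. Applying the equivalence with $F=E$ to the lower bound and with $F=E^{+}_{\epsilon}$ to the upper bound, and noting $[e]\Gamma=\{H\gamma^{-1}:\gamma\in\Gamma\}=\{H\gamma:\gamma\in\Gamma\}$, gives the claimed chain of inequalities with the same $m_0$ furnished by Lemma \ref{reformulation 2}.

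The main obstacle is bookkeeping rather than depth: one must verify that $C\mapsto H\gamma^{-1}$ is genuinely well-defined and injective on circles, so that the two counts truly coincide rather than merely bound one another, and one must track carefully the inversion that interchanges $N_{-F}\leftrightarrow N_F$ and $A^{+}\leftrightarrow A^{-}$ while reversing the factor order. I would also record that the argument is insensitive to the choice of representative $g_{C_0}$ and that $m_0$ is inherited unchanged from Lemma \ref{reformulation 2}, so that no new dependence on $\epsilon$ or on the size of $T$ is introduced at this stage, which is exactly the refinement over Proposition~3.7 of \cite{Asymptotic} that the effective argument later requires.
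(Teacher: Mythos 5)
Your proof is correct and takes essentially the same route as the paper: the paper obtains Proposition \ref{reformulation 3} directly from Lemmas \ref{reformulation 1} and \ref{reformulation 2} applied to $\mathcal{P}=\Gamma(C_0)$, leaving implicit exactly the dictionary you spell out between circles $\gamma C_0$ and cosets $H\gamma^{-1}\in[e]\Gamma$. Your verification that $\hat{C}\cap N_F A^{-}_{\log T}j\neq\emptyset$ is equivalent to $H\gamma^{-1}\in B_T(F)$ (via inverting $HKA^{+}_{\log T}N_{-F}$) is precisely the content built into the definition of $B_T(E)$, so the proposal matches the paper's argument in substance, merely writing out the routine bookkeeping.
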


\subsection{Approximate $B_T(E)$ using $HAN$-decomposition}

We study the shape of
 $B_T(E)$ in an effective way. Note that there exists $c_1>0$ depending on the metric of $G$, such that for all small $\epsilon>0$, the set $K_{\epsilon}(0):=\{k\cdot 0:k\in K_{\epsilon}\}\subset \mathbb{C}$ contains the disk of radius $c_1\epsilon$ centered at $0$. Set $$T_{\epsilon}=-\log(c_1\epsilon).$$
 We show the following inclusion:
  \begin{prop}
 \label{B_T(E)}
There exists $c>0$ (independent of $E$) such that for all sufficiently small $\epsilon>0$, we have for all sufficiently large $T>1$
\begin{equation*}
\label{rewrite B_T(E)}
B_T(E)\subset \bigcup_{0\leq t\leq T_{\epsilon}}H\backslash Ha_tK(t)N_{-E}\,\cup\bigcup_{T_{\epsilon}-c\epsilon\leq t\leq \log T+c\epsilon}H\backslash Ha_tN_{-E^{+}_{c\epsilon}},
\end{equation*}
where $K(t)=\{k\in K:a_tk\in HKA^{+}\}$.
 \end{prop}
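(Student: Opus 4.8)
The plan is to verify the claimed inclusion one coset at a time. A general element of $B_T(E)$ is $Hka_tn_{-z}$ with $k\in K$, $0\le t\le\log T$ and $z\in E$, and I will show each such coset lies in one of the two unions. The organizing tool is the generalized Cartan decomposition $G=HA^{+}K$ attached to the symmetric pair $(G,H)$: I write $ka_t=h\,a_{\tau}\,k'$ with $h\in H$, $\tau=\tau(ka_t)\ge 0$ and $k'\in K$, so that $Hka_t=Ha_{\tau}k'$. The first thing to notice is that this $k'$ automatically belongs to $K(\tau)$, since $a_{\tau}k'=h^{-1}(ka_t)=(h^{-1}k)a_t\in HKA^{+}$, which is exactly the defining condition of $K(\tau)$. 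Moreover $\tau$ is the $H\backslash G/K$–invariant of $ka_t$ (recall $H=\operatorname{Stab}_G(\hat{C_0})$ acts on $\mathbb{H}^3$ with orbits the equidistant sets of $\hat{C_0}$), so $\tau=d(ka_tj,\hat{C_0})=d(j,(ka_t)^{-1}\hat{C_0})$; as $j\in\hat{C_0}$ this gives $\tau\le d(j,ka_tj)=t\le\log T$. I then split according to whether $\tau\le T_{\epsilon}$ or $\tau>T_{\epsilon}$. If $\tau\le T_{\epsilon}$ there is nothing to do: since $z\in E$ and $k'\in K(\tau)$, the coset $Hka_tn_{-z}=Ha_{\tau}k'n_{-z}$ already lies in $Ha_{\tau}K(\tau)N_{-E}$, which is part of the first union.

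The real content is the case $\tau>T_{\epsilon}$, where the goal is to absorb the $K$–part and land in the second union, i.e.\ to rewrite $Hka_tn_{-z}$ as $Ha_{s}n_{-z'}$ with $s\in[T_{\epsilon}-c\epsilon,\log T+c\epsilon]$ and $z'\in E^{+}_{c\epsilon}$. For this I pass to the circle (i.e.\ $HAN$) coordinates on $H\backslash G$: a coset $Ha_{s}n_{-z'}$ corresponds to the circle $n_{z'}a_{-s}C_0$ of center $z'$ and radius $e^{-s}$, so every finite circle is represented in this form. The coset $Hka_t$ corresponds to $C':=a_{-t}k^{-1}C_0$, say with center $c'$ and radius $r'$, and appending $n_{-z}$ merely translates it, so that $Hka_tn_{-z}=Ha_{s}n_{-(z+c')}$ with $s=-\log r'$. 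Two exact relations then pin down $r'$ and $c'$. First, because $\hat{C_0}\ni j$ and $k^{-1}$ fixes $j$, the plane $\hat{C'}$ passes through $a_{-t}j=e^{-t}j$, which forces $r'^{2}-|c'|^{2}=e^{-2t}$. Second, the identity $\tau=d(j,\hat{C'})$ together with the point–to–plane distance formula in $\mathbb{H}^3$ gives $\sinh\tau=(1-e^{-2t})/(2r')$.

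Combining these relations is where the argument turns. The constraint $r'^{2}-|c'|^{2}=e^{-2t}$ gives $r'\ge e^{-t}$, whence $\sinh\tau\le\sinh t$, i.e.\ $\tau\le t$. Consequently, in the case $\tau>T_{\epsilon}$ one automatically has $t\ge\tau>T_{\epsilon}$, so \emph{both} $t$ and $\tau$ are large; this is the crucial point, since it forces the error terms $\log(1-e^{-2t})$ and $\log(1-e^{-2\tau})$ to be $O(e^{-2T_{\epsilon}})=O((c_1\epsilon)^2)$, far smaller than $c\epsilon$. Solving the distance relation then yields $s=-\log r'=\tau+O((c_1\epsilon)^2)$, so that $T_{\epsilon}-c\epsilon<s\le t\le\log T+c\epsilon$; and $|c'|\le r'=e^{-s}\le e^{-(T_{\epsilon}-c\epsilon)}$, which is $\le c\epsilon$ once $c$ is chosen a suitable multiple of $c_1$, so that $n_{c'}\in U_{c\epsilon}$ and hence $z+c'=n_{c'}z\in E^{+}_{c\epsilon}$. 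Therefore $Hka_tn_{-z}\in Ha_{s}N_{-E^{+}_{c\epsilon}}$ with $s$ in the required range, placing it in the second union.

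I expect the main obstacle to be exactly this Case~2 bookkeeping: deriving the two exact relations for the center and radius of $C'$, extracting $\tau\le t$ so that $t$ is forced to be large, and then checking that the induced shift in the $A$–parameter is $O((c_1\epsilon)^2)$ while the shift of the center is $\le c\epsilon$, uniformly in $0\le t\le\log T$ and in $k\in K$. One should also treat separately the degenerate locus where $k^{-1}\hat{C_0}$ is vertical and $C'$ is a line: there $\tau$ stays bounded by a constant independent of $\epsilon$, so for all small $\epsilon$ these cosets have $\tau\le T_{\epsilon}$ and fall under the first case.
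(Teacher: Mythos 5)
Your proposal is correct, and it reaches the inclusion by a genuinely different route than the paper's. The paper starts from Oh--Shah's structural results (its Lemma~3.6, quoted from \cite{Asymptotic}) to write $B_T(E)=\bigcup_{0\le t\le \log T}H\backslash Ha_tK(t)N_{-E}$ and to get $K(t)\subset K_\epsilon M$ once $t>T_\epsilon$, and then proves a separate Lie-theoretic absorption lemma (its Lemma~3.7): for $t$ large, $k\in K_\epsilon$, $m_\theta\in M$, one has $a_tkm_\theta\in Ha_tA_{c_2\epsilon}N_{c_2\epsilon}$, established via the product decompositions $K_\epsilon\subset N^-_{l_1\epsilon}A_{l_1\epsilon}M_{l_1\epsilon}N_{l_1\epsilon}$ and $N^-_\epsilon\subset H_{l_2\epsilon}A_{l_2\epsilon}N_{l_2\epsilon}$ together with the contraction $a_tn^-a_{-t}$. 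You instead work in the geometric model: identifying $H\backslash G$ with the space of circles via $Hg\mapsto g^{-1}C_0$, you apply the generalized Cartan decomposition $G=HA^+K$ to each coset, observe that the resulting $k'$ lies in $K(\tau)$ for free and that $\tau\le t$ by a distance comparison, and in the range $\tau>T_\epsilon$ you pin down the center and radius of $a_{-t}k^{-1}C_0$ through the two exact relations (plane through $e^{-t}j$, and the point-to-plane formula $\sinh\tau=(1-e^{-2t})/(2r')$), concluding $Hka_tn_{-z}=Ha_sn_{-(z+c')}$ with $s=\tau+O(\epsilon^2)$ and $|c'|\le e^{-s}=O(\epsilon)$. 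The skeletons agree (split at $T_\epsilon$; rewrite the far range in $HAN$ form with $O(\epsilon)$ perturbations), but your core step is hyperbolic-geometric and self-contained --- you never need $K(t)\subset K_\epsilon M$ nor the product-decomposition lemma --- and it even yields a sharper $O(\epsilon^2)$ control on the shift of the $A$-parameter, at the price of being specific to this rank-one circle setting, whereas the paper's soft Lie-theoretic argument is the kind that transfers to more general homogeneous situations. Your handling of the degenerate locus is also right: when $a_{-t}k^{-1}C_0$ is a line, the corresponding vertical plane contains $j$, so in fact $\tau=0$ and those cosets sit in the first union automatically. One small point to tidy in a final write-up: the bound $|c'|\le e^{-(T_\epsilon-c\epsilon)}$ involves the constant $c$ you are still choosing; to avoid the apparent circularity, bound instead $s\ge\tau-1>T_\epsilon-1$, so $|c'|\le e\,c_1\epsilon$ independently of $c$, and then fix $c$ as a suitable multiple of $c_1$ (absorbing the Lipschitz constant of $c'\mapsto n_{c'}$ into $c$).
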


We first recall some results in \cite{Asymptotic}.

\begin{lem}[Proposition 4.2 and Corollary 4.3 in \cite{Asymptotic}]
\label{structure analysis 1}
\begin{enumerate}
\item If $a_t\in HKa_sK$ for $s>0$, then $|t|\leq s$. This in particular implies
\begin{equation*}
HKA^+_{\log T} =\bigcup_{0 \leq t \leq \log T}Ha_tK(t) \,\,\,\text{for any} \,\,T>1.
\end{equation*}
\item Given any small $\epsilon >0$, we have
\begin{equation*}
\{k \in K : a_tk \in HKA^+ \, \text{for  some}\, \,t >T_{\epsilon}\} \subset K_\epsilon M.
\end{equation*}
\end{enumerate}
\end{lem}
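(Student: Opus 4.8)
The plan is to translate both assertions into elementary hyperbolic geometry on $\mathbb{H}^3=G/K$, using three facts: (i) $\hat{C_0}=Hj$, so $H$ acts transitively on the totally geodesic plane $\hat{C_0}$ through $j$; (ii) $a_tj=e^tj$ lies on the vertical axis through $j$, which meets $\hat{C_0}$ orthogonally at $j$, whence $j$ is the foot of the perpendicular from $a_tj$ and $d(a_tj,\hat{C_0})=|t|$; and (iii) the identity $HK=\{g\in G:gj\in\hat{C_0}\}$, immediate from $\hat{C_0}=Hj$ and $K=\operatorname{Stab}_G(j)$. I will also use the generalized Cartan decomposition $G=HA^+K$, in which the $A^+$-parameter of $g$ equals $t(g):=d(gj,\hat{C_0})$ and is uniquely determined; this is elementary given that $H$ acts transitively on $\hat{C_0}$.

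For the first assertion of (1), if $a_t=hk_1a_sk_2$ with $s>0$, then $k_2j=j$ gives $a_tj=hk_1a_sj$, so
\[
|t|=d(a_tj,\hat{C_0})=d(hk_1a_sj,\hat{C_0})=d(k_1a_sj,\hat{C_0})\le d(k_1a_sj,j)=d(a_sj,j)=s,
\]
where the third equality uses $h\in\operatorname{Stab}_G(\hat{C_0})$, the inequality uses $j\in\hat{C_0}$, and the fourth uses $k_1\in\operatorname{Stab}_G(j)$. For the displayed consequence I would prove the inclusion $HKA^+_{\log T}\subseteq\bigcup_{0\le t\le\log T}Ha_tK(t)$ — the direction used downstream — by writing any $g=hka_s\in HKA^+_{\log T}$ in its $HA^+K$ form $g=h_1a_{t(g)}k_1$; then $t(g)=d(ka_sj,\hat{C_0})\le d(ka_sj,j)=s\le\log T$, while $a_{t(g)}k_1=h_1^{-1}g\in HKA^+$ shows $k_1\in K(t(g))$. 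The reverse inclusion reduces to the same dictionary, the only point to check being that the $A^+$-parameter produced is at most $\log T$; I return to this below.

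For (2) the key reduction, valid for $t>0$, is
\[
a_tk\in HKA^+\iff a_tka_{-s}\in HK\ \text{for some } s\ge0\iff a_tka_{-s}j\in\hat{C_0}\ \text{for some } s\ge0,
\]
using (iii). Applying $a_{-t}$ and writing $\zeta:=k\cdot 0\in\mathbb{C}$, this says that the geodesic ray $s\mapsto ka_{-s}j$ issuing from $j$ toward $\zeta$ meets $a_{-t}\hat{C_0}$, the Euclidean hemisphere of radius $e^{-t}$ centred at the origin. A direct computation with the defining semicircle of this ray shows that, for $|\zeta|\le 1$, its minimal Euclidean distance to the origin equals $|\zeta|$; hence the ray meets that hemisphere precisely when $|\zeta|\le e^{-t}$. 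Thus $a_tk\in HKA^+$ for some $t>T_\epsilon$ forces $|k\cdot 0|\le e^{-t}<e^{-T_\epsilon}=c_1\epsilon$, so $k\cdot 0$ lies in the disk of radius $c_1\epsilon$ about the origin, which by the definition of $c_1$ is contained in $K_\epsilon(0)$. Writing $k\cdot 0=k'\cdot 0$ with $k'\in K_\epsilon$ and using $\operatorname{Stab}_K(0)=M$ (an element of $K$ fixing both $j$ and $0$ fixes the vertical axis, hence $\infty$, hence is diagonal) gives $(k')^{-1}k\in M$, i.e. $k\in K_\epsilon M$.

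The main obstacle is the quantitative closest-approach estimate in (2): one must verify that the ray from $j$ to $\zeta$ approaches the origin to within exactly $|\zeta|$ (not merely up to a multiplicative constant), since it is this sharp threshold that matches the prescribed cutoff $T_\epsilon=-\log(c_1\epsilon)$ together with the containment of the radius-$c_1\epsilon$ disk in $K_\epsilon(0)$. A secondary point needing care is the reverse inclusion in (1): the naive decomposition only places an element of $Ha_tK(t)$ inside $HKA^+$ with $A^+$-parameter $t$, so one must check this can be realized with $A$-component at most $\log T$; I would note that in the application only $HKA^+_{\log T}\subseteq\bigcup_tHa_tK(t)$ is invoked, and that inclusion is immediate from the distance inequality above.
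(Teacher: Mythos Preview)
Your argument is correct and self-contained; the paper itself does not give a proof but refers to Proposition~4.2 and Corollary~4.3 of \cite{Asymptotic}, only adding the observation that $K_\epsilon(0)$ contains the disk of radius $c_1\epsilon$. Your geometric translation---reading the $A^+$-component in $HA^+K$ as the distance to $\hat{C_0}$, and reading $a_tk\in HKa_s$ as the geodesic ray $s\mapsto a_tk(e^{-s}j)$ meeting $\hat{C_0}$---is precisely the device used in \cite{Asymptotic}, and your closest-approach computation for the ray from $j$ toward $\zeta=k(0)$ is the sharp ingredient that matches the cutoff $T_\epsilon=-\log(c_1\epsilon)$ to the defining property of $c_1$, which is exactly the refinement the present paper contributes.

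You are also right to flag the reverse inclusion in (1). In fact the displayed equality fails as stated: for $t>0$ the ray $s\mapsto a_tk(e^{-s}j)$, not lying in $\hat{C_0}$, meets that totally geodesic plane in at most one point, so each $k\in K(t)$ determines a \emph{unique} $s\ge t$ with $a_tk\in HKa_s$, and this $s$ tends to $\infty$ as $|k(0)|\uparrow e^{-t}$. Thus $Ha_tK(t)\not\subset HKA^+_{\log T}$ in general. Only the inclusion $HKA^+_{\log T}\subset\bigcup_{0\le t\le\log T}Ha_tK(t)$ is ever used (in Proposition~\ref{B_T(E)} and in the proof of Theorem~\ref{effective circle counting thm}), and that is exactly what your distance argument establishes.
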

In fact, the second statement of the lemma is a more precise version of Proposition 4.2 (2) in \cite{Asymptotic}. We add to the original proof the observation that $K_{\epsilon}(0)$ contains the disk of radius $c_1\epsilon$ centered at $0$ for all small $\epsilon>0$.

\begin{lem}
\label{structure analysis 2}
There exist $c_2 >1$ and $t_0>1$, such that for any sufficiently small $\epsilon >0$,
\begin{equation*}
a_tkm_{\theta}\in Ha_tA_{c_2\epsilon}N_{c_2\epsilon}
\end{equation*}
holds for any $t>t_0$, $k\in K_{\epsilon}$ and $m_{\theta}\in M$.
\end{lem}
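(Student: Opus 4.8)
The plan is to first dispose of the $M$-factor and then reduce the statement to a decomposition problem for $k$ alone that is uniform in $t$.

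Since the diagonal group $M$ is contained in $\operatorname{PSU}(1,1)\subset H$, and since $M$ commutes with $A$ and normalizes $N$ without changing the size of its elements (indeed $m_\theta^{-1}n_z m_\theta=n_{e^{-2i\theta}z}$ and $|e^{-2i\theta}z|=|z|$, so that $N_{c_2\epsilon}\,m_\theta=m_\theta N_{c_2\epsilon}$), one gets the identity $H a_t A_{c_2\epsilon} N_{c_2\epsilon}\, m_\theta = H a_t A_{c_2\epsilon} N_{c_2\epsilon}$, using also $a_t m_\theta=m_\theta a_t$ and $m_\theta\in H$. Hence it suffices to prove $a_t k\in H a_t A_{c_2\epsilon}N_{c_2\epsilon}$ for every $k\in K_\epsilon$ and $t>t_0$.

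For this I would look for $h\in H$ with $a_t^{-1}h^{-1}a_t\,k\in AN$ having small $A$- and $N$-parts; writing $h_1:=a_t^{-1}h a_t$, this amounts to a decomposition $k=h_1\,a_s\,n_\zeta$ with $h_1\in a_t^{-1}Ha_t$, after which $a_tk=h\,a_t a_s\,n_\zeta=h\,a_{t+s}\,n_\zeta$. The existence, uniqueness and smoothness of such a decomposition for $k$ near $e$ follow from the inverse function theorem, provided the multiplication map $(a_t^{-1}Ha_t)\times A\times N\to G$ is a local diffeomorphism at $e$, i.e. provided $\mathfrak g=\operatorname{Ad}(a_{-t})\mathfrak h\oplus\mathfrak a\oplus\mathfrak n$. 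The quantitative content I need is that $|s|\le c_2\epsilon$ and $|\zeta|\le c_2\epsilon$ with a constant $c_2$ independent of $t$.

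The hard part is precisely this uniformity in $t$. Working in the root-space decomposition $\mathfrak g=\mathfrak n^-\oplus\mathfrak a\oplus\mathfrak m\oplus\mathfrak n$ for $\operatorname{Ad}(A)$, where $\operatorname{Ad}(a_{-t})$ contracts $\mathfrak n$ by $e^{-t}$ and expands $\mathfrak n^-$ by $e^{t}$, a direct computation shows that the rotated subspace $\operatorname{Ad}(a_{-t})\mathfrak h$ converges, as $t\to\infty$, to $\mathfrak m\oplus\mathfrak n^-$. Since $(\mathfrak m\oplus\mathfrak n^-)\oplus\mathfrak a\oplus\mathfrak n$ is a genuine direct-sum decomposition of $\mathfrak g$, the transversality of $\operatorname{Ad}(a_{-t})\mathfrak h$ to $\mathfrak a\oplus\mathfrak n$ stays bounded away from degeneracy for all $t\ge t_0$; consequently the inverse of the decomposition map has operator norm bounded uniformly in $t$, which yields $|s|,|\zeta|\le c_2\epsilon$ for $k\in K_\epsilon$ and fixes both the constant $c_2$ and the threshold $t_0$. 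A short linearized computation—choosing $h=\exp Y_h$ with $Y_h\in\mathfrak h$ selected to cancel the $\mathfrak m$- and $\mathfrak n^-$-components of $\log k$ to first order—both motivates this and shows that the $\mathfrak a$-component is in fact $O(\epsilon)$, so that $a_s\in A_{c_2\epsilon}$ and $n_\zeta\in N_{c_2\epsilon}$. Setting $h:=a_t h_1 a_t^{-1}\in H$ then gives $a_tk=h\,a_{t+s}n_\zeta\in Ha_tA_{c_2\epsilon}N_{c_2\epsilon}$, and combined with the reduction of the first paragraph this proves the lemma.
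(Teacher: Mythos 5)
Your proposal is correct, but it takes a genuinely different route from the paper's proof. The paper argues by explicit decompositions at a fixed scale: it writes $k\in K_\epsilon$ in $N^-AMN$ coordinates, $k=n_1^-a_{s_1}m_{\theta_1}n_1$ with all factors of size $O(\epsilon)$, observes that $a_tn_1^-a_{-t}\in N^-_{\epsilon}$ for all $t>t_0$ (this contraction is where $t_0$ comes from), absorbs this small $N^-$-element via the local $H\times A\times N$ product decomposition $N^-_\epsilon\subset H_{l_2\epsilon}A_{l_2\epsilon}N_{l_2\epsilon}$, and finally rearranges using exactly the commutation facts in your first paragraph ($M$ commutes with $A$, normalizes $N$ isometrically, and lies in $H$), arriving at $c_2=l_1+l_2$; uniformity in $t$ is automatic there, since increasing $t$ only shrinks the conjugated factors, so both auxiliary decompositions are applied at scales independent of $t$. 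You instead dispose of the $M$-factor first and then solve the $t$-dependent decomposition $k=h_1a_sn_\zeta$ with $h_1\in a_t^{-1}Ha_t$ by the inverse function theorem, getting uniformity from the Grassmannian convergence $\operatorname{Ad}(a_{-t})\mathfrak{h}\to\mathfrak{m}\oplus\mathfrak{n}^-$ and the transversality of the limit to $\mathfrak{a}\oplus\mathfrak{n}$ -- a correct computation, and the same dynamical fact (contraction of $\mathfrak{n}^-$ by $\operatorname{Ad}(a_t)$) in disguise. Your argument is softer and more conceptual (it would apply whenever the conjugated subalgebra limits onto a complement of $\mathfrak{a}\oplus\mathfrak{n}$), while the paper's is more elementary and yields the constants directly. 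One point you should make explicit in a full write-up: passing from uniformly invertible differentials at $e$ to the uniform quantitative conclusion ($|s|,|\zeta|\le c_2\epsilon$ on a $t$-independent neighborhood) also needs uniform control of the second-order terms of the product maps; this does hold here, because each of these maps is the restriction of the single fixed map $(X,Y,Z)\mapsto \exp X\exp Y\exp Z$ to the subspaces $\operatorname{Ad}(a_{-t})\mathfrak{h}\times\mathfrak{a}\times\mathfrak{n}$ inside a fixed compact neighborhood, but it is not a formal consequence of the transversality statement alone.
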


\begin{proof}
Fix $k\in K_{\epsilon}$ and $m_{\theta}\in M$. The product map $N^- \times A \times M \times N \to G$ is a diffeomorphism at a neighborhood of $e$, in particular, bi-Lipschitz. Hence there exists $l_1>1$ such that for all small $\epsilon>0$,
\begin{equation*}
K_\epsilon \subset N^-_{l_1\epsilon}A_{l_1\epsilon}M_{l_1\epsilon}N_{l_1\epsilon}.
\end{equation*}
So $k$ can be written as
\begin{equation*}
 k=n^-_1a_{s_1}m_{\theta_1}n_1,
 \end{equation*}
  where $n^-_1\in N^-_{l_1\epsilon}$, $a_{s_1} \in A_{l_1\epsilon}$, $m_{\theta_1} \in M_{l_1\epsilon}$ and $n_1\in N_{l_1\epsilon}$. Then
\begin{align}
\label{decompositon}
a_tk m_{\theta}&= a_tn^-_1a_{s_1}m_{\theta_1}n_1m
_{\theta} \\
&= (a_tn^-_1a_{-t})a_{t+s_1}m_{\theta_1}n_1m_{\theta}. \nonumber
\end{align}

Set $t_0$ to be a constant such that for all $t>t_0$, $a_tn^-_1a_{-t} \in N_\epsilon^{-}$. Due to the $H \times A \times N$ product decomposition of $G_\epsilon$, there exists $l_2>1$ such that for all small $\epsilon>0$, 
\begin{equation*}
N_{\epsilon}^{-} \subset H_{l_2\epsilon} A_{l_2\epsilon} N_{l_2\epsilon}.
\end{equation*}
This inclusion and (\ref{decompositon}) yield
\begin{align*}
a_tkm_{\theta}&= h_2a_{s_2}n_2 a_{t+s_1}m_{\theta_1}n_1m_{\theta} \\
&= h_2m_{\theta+\theta_1}a_{t+s_1+s_2}(m_{-\theta-\theta_1}a_{-(t+s_1)}n_2a_{t+s_1}m_{\theta+\theta_1})(m_{-\theta}n_1 m_{\theta})\nonumber\\
& \in  Ha_tA_{(l_1+l_2)\epsilon}N_{(l_1+l_2)\epsilon},\nonumber
\end{align*}
where $h_2 \in H_{l_2\epsilon}$, $a_{s_2}\in A_{l_2\epsilon}$ and $n_2\in N_{l_2\epsilon}$. Setting the constant $c_2=l_1+l_2$, we have
\begin{equation*}
a_tkm_{\theta}\in Ha_tA_{c_2\epsilon}N_{c_2\epsilon}.
\end{equation*}
\end{proof}

\begin{proof}[Proof of Proposition \ref{B_T(E)}]
For all sufficiently large $T>1$, it follows from Lemmas \ref{structure analysis 1} and \ref{structure analysis 2} that 
\begin{align*}
& B_T(E)\\
=&\bigcup_{0\leq t\leq  T_{\epsilon}}H\backslash Ha_tK(t)N_{-E} \cup \bigcup_{T_{\epsilon}\leq t\leq \log T}H\backslash Ha_tK(t)N_{-E} \\
\subset &\bigcup_{0\leq t\leq T_{\epsilon}}H\backslash Ha_tK(t)N_{-E}\cup \bigcup_{T_{\epsilon}\leq t\leq \log T}H\backslash Ha_tK_{\epsilon}MN_{-E} \\
\subset &\bigcup_{0\leq t\leq T_{\epsilon}}H\backslash Ha_tK(t)N_{-E}\cup \bigcup_{T_{\epsilon}-c_2\epsilon\leq t\leq \log T+c_2\epsilon}H\backslash Ha_tN_{c_2\epsilon}N_{-E}.
\end{align*}
\end{proof}

\subsection{On the measure $\omega_{\Gamma}$}
\begin{defn}
\label{measure on the complex plane}
Define a locally finite Borel measure $\omega_{\Gamma}$ on $\mathbb{C}$ as follows: fixing $x\in \mathbb{H}^3$, for $\psi \in C_{c}(\mathbb{C})$,
\begin{equation*}
\omega_\Gamma(\psi)=\int_{z \in \mathbb{C}} e^{\delta_\Gamma \beta_z(x,z+j)}\psi (z) d\nu_{\Gamma,x}(z).
\end{equation*}
\end{defn}
The definition of $\omega_{\Gamma}$ is independent of the choice of $x\in \mathbb{H}^3$ by the conformal properties of $\nu_{\Gamma,x}$. 

\begin{lem}[Lemma 5.2 in \cite{Asymptotic}]
\label{Busemann function}
For any $x=p+rj\in \mathbb{H}^3$, and $z\in  \mathbb{C}$, we have
\begin{equation*}
\beta_{z}(p+rj,z+j)=\log \frac{|z-p|^2+r^2}{r}.
\end{equation*}
\end{lem}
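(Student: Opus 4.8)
The plan is to exploit the isometry-invariance of the Busemann cocycle to reduce the computation to the elementary case $\xi=\infty$, where the Busemann function is merely a difference of logarithmic heights. Recall that for any $g\in G$ and any $\xi\in\partial\mathbb{H}^3$ one has $\beta_{g\xi}(gy,gx)=\beta_\xi(y,x)$, since the image under $g$ of a geodesic ray tending to $\xi$ is a geodesic ray tending to $g\xi$ and $g$ preserves hyperbolic distance. So I would first fix $g\in\operatorname{PSL}_2(\mathbb{C})$ sending the boundary point $z$ to $\infty$, for instance $g=\begin{pmatrix}0&-1\\1&-z\end{pmatrix}$ (so $c=1$, $d=-z$), and rewrite
\[
\beta_z(p+rj,z+j)=\beta_\infty\bigl(g(p+rj),\,g(z+j)\bigr).
\]

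Next I would evaluate the right-hand side. Taking the vertical geodesic ray $\xi_t=e^{t}j$ toward $\infty$ and using $\cosh d(e^{t}j,\,w+yj)=1+\tfrac{|w|^2+(e^{t}-y)^2}{2e^{t}y}$, one finds $d(e^{t}j,w+yj)=t-\log y+o(1)$ as $t\to\infty$; hence for any two points $P,Q\in\mathbb{H}^3$ of heights $h_P,h_Q$ we get $\beta_\infty(P,Q)=\log(h_Q/h_P)$. It then remains to track how the height transforms under the action of $g$. Writing the action of $\begin{pmatrix}a&b\\c&d\end{pmatrix}$ on a quaternion $\zeta=w+yj$ by $\zeta\mapsto(a\zeta+b)(c\zeta+d)^{-1}$, a direct computation gives the height formula $h(g\zeta)=\tfrac{y}{|cw+d|^2+|c|^2y^2}$, the exact analogue of the familiar $\operatorname{Im}(g\tau)=\operatorname{Im}(\tau)/|c\tau+d|^2$ in $\mathbb{H}^2$.

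With $c=1$ and $d=-z$ this yields $h\bigl(g(p+rj)\bigr)=\tfrac{r}{|p-z|^2+r^2}$ and $h\bigl(g(z+j)\bigr)=1$, so that
\[
\beta_z(p+rj,z+j)=\beta_\infty\bigl(g(p+rj),g(z+j)\bigr)=\log\frac{h(g(z+j))}{h(g(p+rj))}=\log\frac{|z-p|^2+r^2}{r},
\]
which is precisely the claimed identity.

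I expect the only genuinely delicate point to be the verification of the height-transformation formula $h(g\zeta)=y/(|cw+d|^2+|c|^2y^2)$: it requires carrying out the quaternionic inversion $(c\zeta+d)^{-1}=\overline{(c\zeta+d)}/|c\zeta+d|^2$ correctly and extracting the $j$-component, noting that $c\zeta+d=(cw+d)+(cy)j$ has norm-square $|cw+d|^2+|c|^2y^2$. Once the normalization conventions are fixed (the quaternionic Möbius action and the sign convention in the definition of $\beta$), every other step is a short and robust computation; and since $\omega_\Gamma$ and the Busemann cocycle are insensitive to the base point, one is free to make the convenient choices above.
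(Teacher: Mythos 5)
Your proof is correct. Note that this paper does not actually prove the lemma itself: it is imported verbatim as Lemma 5.2 of Oh--Shah \cite{Asymptotic}, so there is no in-paper argument to compare against, and a self-contained verification like yours is welcome. Your route --- conjugating $z$ to $\infty$ by $g=\begin{pmatrix}0&-1\\1&-z\end{pmatrix}$, using the cocycle invariance $\beta_{g\xi}(gy,gx)=\beta_{\xi}(y,x)$, the elementary identity $\beta_{\infty}(P,Q)=\log(h_Q/h_P)$ for heights $h_P,h_Q$, and the quaternionic height-transformation rule $h(g\zeta)=y/(|cw+d|^2+|c|^2y^2)$ --- is complete: the invariance follows from isometry-invariance of $d$ together with the fact that the limit defining $\beta_\xi$ is independent of the chosen ray, the asymptotic $d(e^t j, w+yj)=t-\log y+o(1)$ follows from $\cosh d=1+\tfrac{|w|^2+(e^t-y)^2}{2e^t y}$, and the height formula is the standard computation you indicate (with $|c\zeta+d|^2=|cw+d|^2+|c|^2y^2$). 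For comparison, the usual way this is proved (and essentially what happens in \cite{Asymptotic}) is to invoke the Poisson kernel $P(w+yj,\xi)=y/(|w-\xi|^2+y^2)$ on the upper half-space together with the identity $e^{-\beta_\xi(x,y)}=P(x,\xi)/P(y,\xi)$, which yields the formula in one line; your argument amounts to re-deriving that identity from first principles, trading brevity for self-containedness, and both give exactly the stated value $\log\bigl((|z-p|^2+r^2)/r\bigr)$.
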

This lemma provides another formula for $\omega_{\Gamma}$: for any $\psi\in C_c(\mathbb{C})$, 
\begin{equation*}
\omega_{\Gamma}(\psi)=\int_{z\in \mathbb{C}} (|z|^2+1)^{\delta} \psi(z)d\nu_{j}(z).
\end{equation*}



\subsubsection {Relation between $\omega_{\Gamma}$ and $m^{\operatorname{BR}}$}

For a bounded  Borel set $E \subset \mathbb{C}$, let $E^{+}_{\epsilon}$ and $E^{-}_{\epsilon}$ be the sets defined as (\ref{neighborhood of a set}). For small $\epsilon>0$, let $\psi^{\epsilon}$ be a non-negative smooth function in $C(G)$ supported in $U_{\epsilon}$ with integral one. Set $\Psi^{\epsilon} \in C^{\infty}_{c}(\Gamma \backslash G)$ to be the $\Gamma$-average of $\psi^{\epsilon}$:
\begin{equation*}
\Psi^{\epsilon}(\Gamma g):=\sum_{\gamma \in \Gamma} \psi^{\epsilon}(\gamma g).
\end{equation*} 
For a bounded Borel subset $E\subset \mathbb{C}$, let $$h_E=\max_{z\in E}{|z|}+1.$$

\begin{prop}[cf. Lemma 5.7 in \cite{Asymptotic}]
\label{relation of measures}
There exists $c>0$ independent of $E$ such that for all small $\epsilon>0$,
\begin{equation*}
(1-c\cdot h_{E}\cdot \epsilon) \cdot \omega_{\Gamma}(E^{-}_{\epsilon})\leq \int_{z\in E}m^{\operatorname{BR}}(\Psi^{\epsilon}_{-z})dn_{-z} \leq (1+c\cdot h_{E} \cdot \epsilon) \cdot \omega_{\Gamma}(E^{+}_{\epsilon}),
\end{equation*}
where $\Psi^{\epsilon}_{-z}\in C^{\infty}_{c}(\Gamma \backslash G)^{M}$ is given by $\Psi^{\epsilon}_{-z}(g)=\int_{m\in M}\Psi^{\epsilon}(gmn_{-z})dm$ with $dm$ the probability measure on $M$.
\end{prop}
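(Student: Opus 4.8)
The plan is to unfold the left-hand side into an integral over $G$ against $\tilde m^{\operatorname{BR}}$, to make the Burger--Roblin density explicit on the horospherical slice $N_E=\{n_z:z\in E\}$ using Lemma \ref{Busemann function}, and to read off $\omega_\Gamma$ as the main term; the two error mechanisms — a domain discrepancy that produces the sets $E^{\pm}_\epsilon$ and a multiplicative density distortion that produces the factor $(1\pm c\,h_E\epsilon)$ — are then estimated separately. This is the effective refinement of Lemma 5.7 in \cite{Asymptotic}, and I would borrow from there the non-effective skeleton together with the value of the implicit normalizing constant.

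First I would unfold. Writing $\Psi^\epsilon$ as the $\Gamma$-average of $\psi^\epsilon$ and using the left $\Gamma$-invariance of $\tilde m^{\operatorname{BR}}$, together with its right $M$-invariance to dispose of the $M$-average defining $\Psi^\epsilon_{-z}$, the left-hand side becomes
\begin{equation*}
\int_{z\in E} m^{\operatorname{BR}}(\Psi^\epsilon_{-z})\, dn_{-z}=\int_{z\in E}\int_{G}\psi^\epsilon(g n_{-z})\, d\tilde m^{\operatorname{BR}}(g)\, dz .
\end{equation*}
Since $\psi^\epsilon$ is supported in $U_\epsilon$, the inner integrand is nonzero only when $g\in U_\epsilon n_z$, so the entire computation is localized to a fixed $\epsilon$-neighborhood of the slice $N_E$.

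Next I would make the BR density explicit on this slice. For $g=n_z$ one has $gX_0^+=\infty$ and $gX_0^-=z$, with $gj=z+j$; the forward Busemann factor $e^{2\beta_{gX_0^+}(j,gj)}$ equals $1$ (the points $j$ and $z+j$ lie on the same horosphere based at $\infty$), while Lemma \ref{Busemann function} gives $e^{\delta\beta_{gX_0^-}(j,gj)}=(|z|^2+1)^\delta$. Hence the backward-endpoint part of $d\tilde m^{\operatorname{BR}}$ restricted to $N_E$ is exactly the integrand $(|z|^2+1)^\delta\,d\nu_j(z)$ of $\omega_\Gamma$ (Definition \ref{measure on the complex plane}). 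To turn this into an identity I would disintegrate $\tilde m^{\operatorname{BR}}$ in horospherical coordinates: the singular (Patterson--Sullivan) direction is the right $N$-direction carrying the backward endpoint, while the $N^-$, $A$, and $M$ directions are smooth. The right $N$-part of $U_\epsilon$ recombines with the translation $n_{-z}$ to shift the base point $z$ by $O(\epsilon)$, and the transverse part of $U_\epsilon$ displaces the backward endpoint; this is precisely what replaces $E$ by $E^{-}_\epsilon$ (lower bound) or $E^{+}_\epsilon$ (upper bound) through the definition (\ref{neighborhood of a set}). Integrating the smooth transverse factors against $\psi^\epsilon$ and invoking $\int_G\psi^\epsilon=1$ reproduces the normalizing constant, so $\omega_\Gamma(E^{\mp}_\epsilon)$ emerges as the main term exactly as in the non-effective argument.

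The genuinely effective content, and the step I expect to be the main obstacle, is the factor $(1\pm c\,h_E\epsilon)$. Here I would compare the full BR density at a perturbed point $g=u n_z$, $u\in U_\epsilon$, with its value at $n_z$. Using isometry invariance of the Busemann cocycle, $\beta_{u\cdot z}(j,u(z+j))=\beta_z(j,z+j)+\beta_{u\cdot z}(j,uj)$ with $|\beta_{u\cdot z}(j,uj)|\le d(j,uj)=O(\epsilon)$, so the Busemann factors vary only by $e^{O(\epsilon)}$ and carry no dependence on $|z|$. The genuine $z$-dependence of the error therefore comes from the smooth Lebesgue factor $e^{2\beta_{\xi^+}(j,gj)}\,dm_j(\xi^+)=dm_{gj}(\xi^+)$ and from the distortion of the transverse coordinates under left multiplication by $u$: the differential of the $G$-action on $\hat{\mathbb C}$ at $z$ has norm growing linearly in $|z|$, so these factors vary by $1+O(|z|\epsilon)=1+O(h_E\epsilon)$ across $U_\epsilon$. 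The crux is to show this bound is genuinely first order in $h_E$ and uniform in $z\in E$ and in $E$; in particular one must verify that the larger, $O(|z|^2\epsilon)$, displacements arising from the $N^-$-direction only move the backward endpoint $\xi^-$ and are thus already absorbed by the passage to $E^{\pm}_\epsilon$, rather than contributing to the multiplicative density error. Combining the $e^{O(\epsilon)}$ Busemann contribution with the $1+O(h_E\epsilon)$ transverse contribution then yields $(1\pm c\,h_E\epsilon)$ with $c$ independent of $E$.
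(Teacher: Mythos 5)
Your proposal is correct and structurally parallel to the paper's proof: both unfold the pairing against $\tilde{m}^{\operatorname{BR}}$, borrow the non-effective disintegration (Lemma 5.7 and Proposition 5.4 of \cite{Asymptotic}) to make $\omega_{\Gamma}(E^{\pm}_{\epsilon})$ appear, absorb every displacement of the backward endpoint set-theoretically into $E^{\pm}_{\epsilon}$, and charge the factor $(1\pm c\,h_E\epsilon)$ to the distortion of the conformal weight under a $U_\epsilon$-perturbation. Where you genuinely differ is the implementation of that last estimate. The paper does it algebraically: writing $g=m_{\theta_1}a_{t_1}n^{-}_{w_1}n_{z_1}\in U_{\epsilon}$ and $k^{-1}=m_{\theta}a_{t}n_{w}^{-}n_{z}$, Lemma \ref{coordinates for product} gives $t_0-t=t_1+2\log(|1+e^{-t_1-2i\theta_1}w_1z|)=O(\epsilon)+O(\epsilon|z|)$, and since $k(0)=-z$ lies in a small perturbation of $E$ on the support of the integrand, $|z|$ is controlled by $h_E$, so the weight ratio $e^{-\delta(t_0-t)}=1+O(h_E\epsilon)$ and its uniformity in $E$ are immediate from one identity. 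Your geometric route recovers exactly the same two terms: the Busemann-cocycle bound $|\beta_{u(z)}(j,uj)|\le d(j,uj)=O(\epsilon)$ is the paper's $t_1$-term, and the linear-in-$|z|$ deviation of the M\"obius derivative, $|u'(z)|=1+O(\epsilon|z|)$, is the paper's logarithmic term. One caution on your bookkeeping: the claim that the Busemann factors ``carry no dependence on $|z|$'' holds only because you compare at the Lebesgue parameter $z$; the $O(h_E\epsilon)$ error resurfaces when the weight is re-expressed as a function of the PS variable $u(z)$ (it is precisely the gap between the Euclidean derivative $1+O(\epsilon|z|)$ and the spherical derivative $1+O(\epsilon)$), so it lives in the conformal weight --- the paper's $e^{-\delta t_0}$ --- rather than in the Lebesgue factor. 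This is a matter of attribution, not a gap, and your key structural point is right and is the essential one: the $O(\epsilon|z|^2)$ displacement of $\xi^-$ must be absorbed into the enlarged or shrunken domain (the paper's $\chi_{E}(g^{-1}k(0))\le \chi_{E^{+}_{\epsilon}}(k(0))$), since the singular measure $\nu_j$ admits no smooth change of variables, and only the multiplicative $O(\epsilon|z|)$ distortion may enter the constant $c$.
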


\begin{lem}[Lemma 5.5 in \cite{Asymptotic}]
\label{coordinates for product}
If $(m_{\theta}a_tn_w^{-}n_z)(m_{\theta_1}a_{t_1}n_{w_1}^{-}n_{z_1})=m_{\theta_0}a_{t_0}n_{w_0}^{-}n_{z_0}$ in the $MAN^{-}N$ coordinates, then
\begin{align*}
t_0 &=t+t_1+2\log(|1+e^{-t_1-2i\theta_1}w_1z|),\\
z_0 &=e^{-t_1-2i\theta_1}z(1+e^{-t_1-2i\theta_1}w_1z)^{-1}+z_1.
\end{align*}
\end{lem}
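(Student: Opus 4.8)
The plan is to prove the identity by direct computation in $\operatorname{PSL}_2(\mathbb{C})$, exploiting the fact that the two quantities $t_0$ and $z_0$ are determined by the \emph{top row} of the product matrix alone.

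First I would record the matrix form of a coordinate factor. Multiplying out gives
\begin{equation*}
m_\theta a_t n^-_w n_z = \begin{pmatrix} e^{i\theta + t/2} & e^{i\theta + t/2}z \\ e^{-i\theta - t/2}w & e^{-i\theta - t/2}(1 + wz)\end{pmatrix}.
\end{equation*}
From this one reads off how to recover coordinates from a matrix $\begin{pmatrix} a & b \\ c & d\end{pmatrix}$ lying in the $MAN^-N$ cell: the top-left entry satisfies $a = e^{i\theta + t/2}$, whence $t = 2\log|a|$, and the top-right entry satisfies $b = az$, whence $z = b/a$. Crucially, neither $t$ nor $z$ depends on the lower row, so only the top row of the product needs to be computed.

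Next I would compute the top row of $(m_\theta a_t n^-_w n_z)(m_{\theta_1} a_{t_1} n^-_{w_1} n_{z_1})$ by multiplying the top row $(e^{i\theta+t/2},\, e^{i\theta+t/2}z)$ of the first factor into the full matrix of the second factor. Factoring $e^{i\theta + t/2}e^{i\theta_1 + t_1/2}$ out of the resulting top-left entry $a_0$ yields the clean form $a_0 = e^{i\theta+t/2}e^{i\theta_1+t_1/2}(1 + e^{-t_1 - 2i\theta_1}w_1 z)$. Since the prefactor has modulus $e^{t/2}e^{t_1/2}$, setting $t_0 = 2\log|a_0|$ produces the first claimed formula at once. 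For the second, I would form $z_0 = b_0/a_0$, cancel the common factor $e^{i\theta+t/2}e^{i\theta_1+t_1/2}$, and reach $z_0 = (z_1 + \kappa z(1 + w_1 z_1))/(1 + \kappa w_1 z)$ with $\kappa := e^{-t_1 - 2i\theta_1}$; a one-line algebraic identity then shows this equals $\kappa z(1 + \kappa w_1 z)^{-1} + z_1$.

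The computation is essentially routine, so there is no deep obstacle; the only points needing care are the bookkeeping of the phase-and-dilation exponents $i\theta + t/2$ across the matrix entries, and the justification that restricting to the top row legitimately suffices, which holds because the target coordinates $t_0, z_0$ are insensitive to $w_0$ and $\theta_0$. The one structural input is that the product lies in the open $MAN^-N$ Bruhat cell, i.e. $a_0 \neq 0$; this is guaranteed by the hypothesis that it admits the stated $m_{\theta_0}a_{t_0}n^-_{w_0}n_{z_0}$ decomposition, so no separate nondegeneracy argument is required.
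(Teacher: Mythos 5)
Your proposal is correct: the matrix form of $m_\theta a_t n^-_w n_z$, the recovery formulas $t=2\log|a|$, $z=b/a$ from the top row (both insensitive to the sign ambiguity in $\operatorname{PSL}_2(\mathbb{C})$), and the final algebraic simplification all check out, and the cell-membership hypothesis indeed supplies the needed nonvanishing of $a_0$. The paper itself gives no proof, quoting Lemma 5.5 of Oh--Shah \cite{Asymptotic}, and your direct computation is exactly the standard argument behind that cited lemma, so this is essentially the same approach.
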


\begin{proof}[Proof of Proposition \ref{relation of measures}]
Consider the following function on $MAN^{-}N\subset G$:
\begin{equation*}
\mathcal{R}_{E}(ma_tn_w^{-}n_z)=e^{-\delta t}\chi_{E}(-z).
\end{equation*}
We may regard $\mathcal{R}_E$ as a function defined on $G$. It is shown in Lemma 5.7 in \cite{Asymptotic} that
\begin{equation*}
\int_{z\in E}m^{\operatorname{BR}}(\Psi^{\epsilon}_{-z})dn_{-z}=\int_{g\in U_{\epsilon}}\psi^{\epsilon}(g)\int_{k\in K}\mathcal{R}_E(k^{-1}g)d\nu_j(k(0))dg.
\end{equation*}
Write $k^{-1}=m_{\theta}a_{t}n_w^{-}n_z$ and $g=m_{\theta_1}a_{t_1}n_{w_1}^{-}n_{z_1}\in U_{\epsilon}$. By Lemma \ref{coordinates for product}, we have $k^{-1}g=m_{\theta_0}a_{t_0}n_{w_0}^{-1}n_{z_0}$ with $t_0=t+t_1+2\log(|1+e^{-t_1-2i\theta_1}w_1z|)$. Noting that $\mathcal{R}_E(k^{-1}g)=e^{-\delta t_0}\chi_E(g^{-1}k(0))$, we have for any $g\in U_{\epsilon}$
\begin{equation*}
\int_{k\in K/M}\mathcal{R}_E(k^{-1}g)d\nu_j(k(0))=(1+O(h_E\cdot \epsilon))\int_{k(0)\in E^{\pm}_{\epsilon}}e^{-\delta t}d\nu_j(k(0)).
\end{equation*}
Using Proposition 5.4 in \cite{Asymptotic}, we have
\begin{equation*}
\int_{k(0)\in E^{\pm}_{\epsilon}}e^{-\delta t}d\nu_j(k(0))=\omega_{\Gamma}(E^{\pm}_{\epsilon}),
\end{equation*}
which yields the proposition.
\end{proof}

 \subsubsection {Regularity criterion for $\omega_{\Gamma}$}
 
 Fix a bounded Borel set $E\subset \mathbb{C}$. To apply the effective result in our paper, it is important to understand the difference between $\omega_{\Gamma}(E^{\pm}_{\epsilon})$ and $\omega_{\Gamma}(E)$.

\begin{defn}[Regularity condition]
\label{regularity condition}
We call a bounded Borel subset $E \subset \mathbb{C}$ regular if there exists $0<p<1$ such that for all small $\epsilon>0$,
\begin{equation}
\omega_{\Gamma}(E^{+}_{\epsilon}-E^{-}_{\epsilon}) =O(\epsilon^p),
\end{equation}
where the implied constant depends only on $E$.
\end{defn}

\begin{prop}[\cite{StraUrb}]
Suppose  $\Gamma$ is convex cocompact and Zariski dense. For any bounded set $E\subset \mathbb{C}$, if $\partial E$ is a finite union of proper subvarieties, then $E$ is regular in the above sense.
\end{prop}
 
Denote by $\Lambda_{p}(\Gamma)$  the set of parabolic limit points. For $\xi\in\Lambda_p(\Gamma)$, the rank of $\xi$ is the rank of the abelian subgroup of $\Gamma$ which fixes $\xi$. As $\Gamma\subset \operatorname{PSL}_2(\mathbb{C})$, then $\operatorname{rank}(\xi)$ is either 1 or 2. We provide a regularity criterion for $\Gamma$ with $\Lambda_{p}(\Gamma)\neq \emptyset$.

\begin{prop} Let $k_0:=\max_{\xi\in \Lambda_p(\Gamma)}\operatorname{rank}(\xi)$. If the critical exponent of $\Gamma$ satisfies
\begin{equation*}
\delta> \max \{1,\frac{k_0+1}{2}\},
\end{equation*}
then any bounded Borel set $E \subset \mathbb{C}$ with $\partial E$ a finite union of rectifiable curves is regular.
\end{prop}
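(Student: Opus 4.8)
The plan is to reduce the statement to a Patterson--Sullivan measure estimate for a thin neighborhood of $\partial E$, and then to control that measure by a covering argument together with a uniform upper bound for the PS-mass of a single ball, the threshold $(k_0+1)/2$ arising precisely from the worst-case cusp behavior of that bound. First I would reduce to $\nu_j$: by the definition of $E^{\pm}_{\epsilon}$ there is $c>0$ with $E^{+}_{\epsilon}\setminus E^{-}_{\epsilon}\subset V_{c\epsilon}(\partial E)$, the Euclidean $c\epsilon$-neighborhood of $\partial E$, so since $\partial E$ is a finite union of rectifiable curves it suffices to bound $\nu_j\bigl(V_{\epsilon}(\gamma)\bigr)$ for one curve $\gamma$ of finite length $L$ and sum. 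Using $\omega_{\Gamma}(\psi)=\int(|z|^2+1)^{\delta}\psi\,d\nu_j$ and the boundedness of $(|z|^2+1)^{\delta}$ on the bounded region containing $\gamma$, we get $\omega_{\Gamma}\bigl(V_{\epsilon}(\gamma)\bigr)\lesssim \nu_j\bigl(V_{\epsilon}(\gamma)\bigr)$ with constant depending only on $E$. Parametrizing $\gamma$ by arclength and taking $\epsilon$-spaced points, I cover $V_{\epsilon}(\gamma)$ by $O(L/\epsilon)$ balls $B(z_i,2\epsilon)$; discarding those disjoint from $\Lambda(\Gamma)$ (they carry no $\nu_j$-mass) and enlarging the rest to be centered at a point $\xi_i\in\Lambda(\Gamma)\cap B(z_i,2\epsilon)$, so that $B(z_i,2\epsilon)\subset B(\xi_i,4\epsilon)$, it remains to bound $\nu_j\bigl(B(\xi_i,4\epsilon)\bigr)$.

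The key step is a uniform upper bound for the PS-mass of a single ball. I would invoke the global measure formula of Stratmann--Velani: for $\xi\in\Lambda(\Gamma)$ and $r=e^{-t}$ small,
\[
\nu_j\bigl(B(\xi,r)\bigr)\asymp r^{\delta}\,e^{-d(\xi_t,\,\Gamma j)\,(\delta-k(\xi_t))},
\]
where $\xi_t$ is the point at distance $t$ from $j$ on the geodesic ray to $\xi$ and $k(\xi_t)$ is the rank of the cusp whose horoball contains $\xi_t$ (the exponent being $0$ in the thick part). The crucial observation is $d(\xi_t,\Gamma j)\le d(\xi_t,j)=t$. Hence in a dangerous rank-$k$ cusp (where $k>\delta$) the enhancement factor obeys $e^{(k-\delta)d(\xi_t,\Gamma j)}\le e^{(k-\delta)t}=r^{\delta-k}$, giving $\nu_j(B(\xi,r))\lesssim r^{2\delta-k}\le r^{2\delta-k_0}$, while in the thick part $\nu_j(B(\xi,r))\asymp r^{\delta}$. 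Since $\Gamma$ is geometrically finite it has finitely many cusps, of ranks $\le k_0$, and the implied constants are uniform, yielding the uniform estimate $\nu_j(B(\xi,r))\lesssim r^{\min(\delta,\,2\delta-k_0)}$.

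Combining these, $\nu_j\bigl(V_{\epsilon}(\gamma)\bigr)\lesssim (L/\epsilon)\cdot\epsilon^{\min(\delta,\,2\delta-k_0)}=L\,\epsilon^{\min(\delta-1,\,2\delta-k_0-1)}$. The hypothesis $\delta>\max\{1,(k_0+1)/2\}$ is exactly what forces both $\delta-1>0$ and $2\delta-k_0-1>0$, so $q:=\min(\delta-1,2\delta-k_0-1)>0$; taking $p:=\min(q,\tfrac12)\in(0,1)$ and summing over the finitely many curves comprising $\partial E$ gives $\omega_{\Gamma}(E^{+}_{\epsilon}\setminus E^{-}_{\epsilon})=O(\epsilon^{p})$ with constant depending only on $E$, which is the regularity condition.

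The hard part will be the uniform single-ball estimate, and in particular justifying that the cusp-penetration enhancement never beats $r^{\delta-k}$ uniformly over all $\xi$ and all (finitely many) cusps: this is where geometric finiteness and the global measure formula are essential, and where the precise threshold $(k_0+1)/2$ is forced. I would emphasize that bounding every covering ball by the global worst case is wasteful, since most balls meet only the thick part and contribute $\epsilon^{\delta}$; a dyadic decomposition by distance to the cusps would sharpen the exponent. However, regularity only demands some positive exponent $p\in(0,1)$, so the crude accounting already suffices, and pursuing the sharper estimate is unnecessary here.
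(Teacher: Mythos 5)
Your proposal is correct and follows essentially the same route as the paper: both arguments hinge on the Stratmann--Velani global measure formula combined with the bound $d(\xi_t,\Gamma j)\le t$ to get $\nu_j(B(\xi,\epsilon))\ll \epsilon^{\delta}+\epsilon^{2\delta-k_0}$ for $\xi\in\Lambda(\Gamma)$, then cover the $\epsilon$-neighborhood of $\partial E$ by $O(\epsilon^{-1})$ balls of radius comparable to $\epsilon$ (re-centered at limit points when needed) using rectifiability, yielding the exponent $\min(\delta-1,\,2\delta-k_0-1)>0$. The only cosmetic difference is that you produce the $O(\epsilon^{-1})$ cover directly from an arclength parametrization, whereas the paper obtains it via the Besicovitch covering theorem plus a length bound; the two implementations are interchangeable.
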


\begin{proof}
The proof is adapted from the proof of Proposition 7.10 in \cite{Matrix coefficients}. For $\xi\in \partial \mathbb{H}^3$, denote by  $s_{\xi}=\{\xi_t:t\in [0,\infty)\}$ the geodesic ray emanating from $j$ toward $\xi$ and let $S(\xi_t) \in \mathbb{H}^3$ be the unique 2-dimensional geodesic plane which is orthogonal to $s_{\xi}$ at the point $\xi_t$. Denote by $b(\xi_t)$ the projection from $j$ onto $\partial \mathbb{H}^3$ of $S(\xi_t)$, that is, 
\begin{equation*}
b(\xi_t)=\{\xi' \in \partial \mathbb{H}^3: s_{\xi'} \cap S(\xi_t) \neq \emptyset\}.
\end{equation*}

It is shown in \cite{Stratmann} and \cite{Sullivan} that there exists a $\Gamma$-invariant collection of pairwise disjoint horoballs $\{\mathcal{H}_{\xi}: \xi \in \Lambda_p(\Gamma)\}$ for which the following holds: there exists a constant $c>1$ such that for any $\xi \in \Lambda(\Gamma)$ and for any $t>0$, 
\begin{equation*}
c^{-1}e^{-\delta t}e^{d(\xi_t,\Gamma(j))(k(\xi_t)-\delta)} \leq \nu_j(b(\xi_t)) \leq c e^{-\delta t}e^{d(\xi_t,\Gamma(j))(k(\xi_t)-\delta)},
\end{equation*}
where $k(\xi_t)$ is the rank of $\xi'$ if $\xi_t \in \mathcal{H}_{\xi'}$ for some $\xi' \in \Lambda_p(\Gamma)$ and $\delta$ otherwise. Using $0 \leq d(\xi_t,\Gamma(j)) \leq t$, we have for any $\xi \in \Lambda(\Gamma)$ and $t>1$,
\begin{equation}
\label{shadow inequality}
\nu_j(b(\xi_t)) \ll 
\begin{cases}
e^{(-2\delta+k(\xi_t))t} \quad \mbox{if $k(\xi_t) \geq \delta$} \\
e^{-\delta t} \quad \mbox{otherwise} .
\end{cases}
\end{equation}

By standard computation in hyperbolic geometry, for any bounded set $E \subset \mathbb{C}$, there exist $c'>1$ and $0<r_0<1$ such that for any $t>-\log r_0$ and $\xi\in E$,
\begin{equation*}
B(\xi,  e^{-t}/c') \subset b(\xi_t) \subset B(\xi, c' e^{-t})
\end{equation*}
where $B(\xi,r)$ is the  closed Euclidean ball in $\partial \mathbb{H}^3$ of radius $r$. Since $d \omega_{\Gamma}=(|z|^2+1)^{\delta_{\Gamma}} d \nu_j$, setting $k_0:=\max_{\xi' \in \Lambda_p(\Gamma)} \text{rank}(\xi')$, it follows from (\ref{shadow inequality}) that for all small $\epsilon >0$ and $\xi \in \Lambda(\Gamma) \cap E$
\begin{equation*}
\omega_{\Gamma}(B(\xi,\epsilon)) \ll \nu_j(B(\xi,\epsilon)) \ll \epsilon ^{\delta}+ \epsilon^{2\delta-k_0}.
\end{equation*}

Suppose $\partial E$ is a finite union of rectifiable curves. The collection
\begin{equation*} 
\mathcal{F}=\{ B(\xi,\epsilon):\xi\in \partial E\}
\end{equation*}
 forms a Besicovitch covering for $\partial E$. By the Besicovitch covering theorem, there exist $d\in \mathbb{N}$ (independent of $\epsilon$ and $E$) and  a countable subcollection of $\mathcal{F}$, say $\{B(\xi_n,\epsilon)\}$, such that 
 \begin{equation*}
 \partial E\subset \bigcup B(\xi_n,\epsilon),
 \end{equation*}
 and  the multiplicity of the cover $\bigcup B(\xi_n,\epsilon)$ is at most $d$. Since $\partial E$ is a finite union of rectifiable curves, we have
\begin{equation}
\label{length}
\sum l(B(\xi_n,\epsilon)\cap \partial E)\ll  l(\partial E)<\infty,
\end{equation} 
where $l(\cdot)$ denotes the length function. This implies 
\begin{equation*}
\# \{B(\xi_n,\epsilon)\} =O(\epsilon^{-1}).
\end{equation*}

Now cover the $\epsilon$-neighborhood of $\partial E$ by $\{B(\xi_n,2\epsilon)\}$. Note that $\nu_j$ is supported on $\Lambda(\Gamma)$. For every $B(\xi_n, 2\epsilon)$, if $\xi_n\notin \Lambda(\Gamma)$ but $B(\xi_n, 2\epsilon)$ intersects $\Lambda(\Gamma)$ nontrivially, replace it by a ball of radius $4\epsilon$ with center in $\Lambda(\Gamma)$. We obtain the estimate
\begin{equation}
\omega_{\Gamma}(\epsilon\text{-nbhd of} \,\, \partial E )=O(\epsilon ^{\delta-1}+\epsilon ^{2\delta-k_0-1}),
\end{equation}
where the constant depending only on $E$. Therefore, if $\delta > \max \{1,\frac{k_0+1}{2}\}$,  then $E$ is regular.
\end{proof}

\subsection{Conclusion}
We keep the notations from Section 2. One of our main theorems is the following:
\begin{thm}
\label{effective circle counting thm}
Assume $\Gamma$ is either convex cocompact or its critical exponent $\delta$ is greater than $1$. Let $\mathcal{P}$ be a locally finite circle packing in $\hat{\mathbb{C}}$ invariant under $\Gamma$ with finitely many $\Gamma$-orbits. For any bounded connected regular set $E$ in $\mathbb{C}$, there exists $\eta>0$, such that as $T \to \infty$, we have
\begin{equation*}
N_T(\mathcal{P},E)=\frac{\operatorname{sk}_{\Gamma}(\mathcal{P})}{\delta \, |m^{\operatorname{BMS}}|}\,T^{\delta}\, \omega_{\Gamma}(E)+O(T^{\delta-\eta}).
\end{equation*}
\end{thm}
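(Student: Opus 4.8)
The plan is to convert the circle count into a count of the $\Gamma$-orbit $[e]\Gamma$ in $H\backslash G$ and then to evaluate the latter by feeding the shape of $B_T(E)$ into an effective equidistribution statement for the expanding translates of the Patterson--Sullivan-weighted $H$-orbit. By Proposition~\ref{reformulation 3} the quantity $N_T(\mathcal{P},E)$ is squeezed, up to the bounded error $m_0$, between $\#([e]\Gamma\cap B_T(E))$ and $\#([e]\Gamma\cap B_T(E^{+}_{\epsilon}))$; and since $\mathcal{P}$ has finitely many $\Gamma$-orbits it suffices to treat a single orbit $\Gamma(C_0)$ with $H=\operatorname{Stab}_G(\hat C_0)$ and afterwards sum, the aggregate Patterson--Sullivan mass $|\mu^{\operatorname{PS}}_{H}|$ being exactly $\operatorname{sk}_{\Gamma}(\mathcal{P})$ by Definition~\ref{skinning size of circle packing}.

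First I would smooth the orbit count. With the bump function $\Psi^{\epsilon}$ of the paragraph preceding Proposition~\ref{relation of measures}, I form the $\Gamma$-invariant counting function $F_T(\Gamma g)=\sum_{\gamma\in\Gamma_H\backslash\Gamma}\chi_{B_T(E)}(H\gamma g)$ and pair it with $\Psi^{\epsilon}$ over $\Gamma\backslash G$. Because $\Psi^{\epsilon}$ is supported in $U_{\epsilon}$, the pairing $\langle F_T,\Psi^{\epsilon}\rangle$ differs from $F_T(\Gamma e)=\#([e]\Gamma\cap B_T(E))$ only through the passage from $E$ to $E^{\pm}_{\epsilon}$ already accounted for in Proposition~\ref{reformulation 3}, so it suffices to evaluate $\langle F_T,\Psi^{\epsilon}\rangle$. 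Unfolding the sum over $\Gamma_H\backslash\Gamma$ and organizing the resulting integral along the $H$-orbit expresses $\langle F_T,\Psi^{\epsilon}\rangle$ as an average of $\Psi^{\epsilon}$ over the translated Patterson--Sullivan-weighted $H$-orbit; as in \cite{Asymptotic}, it is the PS density (not Haar) that governs this orbit in the geometrically finite setting.

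Next I would insert the decomposition of $B_T(E)$ from Proposition~\ref{B_T(E)}. Parametrizing $H\backslash G$ by the coordinates $H a_t n_{-z}$, the range $0\le t\le T_{\epsilon}$ contributes a term independent of $T$ (it counts only circles of curvature at most a constant times $\epsilon^{-1}$), while the main range $T_{\epsilon}-c\epsilon\le t\le \log T+c\epsilon$ yields, at each level $t$ and each $z\in E$, the orbital integral $\int_{\Gamma_H\backslash H}\Psi^{\epsilon}(\Gamma h a_t n_{-z})\,d\mu^{\operatorname{PS}}_{H}(h)$. Here the effective equidistribution theorem of \cite{Matrix coefficients} enters, supplying a rate $\lambda>0$ and a Sobolev norm $\mathcal{S}(\Psi^{\epsilon})$ for which
\begin{equation*}
\int_{\Gamma_H\backslash H}\Psi^{\epsilon}(\Gamma h a_t n_{-z})\,d\mu^{\operatorname{PS}}_{H}(h)=e^{-(2-\delta)t}\frac{|\mu^{\operatorname{PS}}_{H}|}{|m^{\operatorname{BMS}}|}\,m^{\operatorname{BR}}(\Psi^{\epsilon}_{-z})+O\!\big(e^{-(2-\delta+\lambda)t}\mathcal{S}(\Psi^{\epsilon})\big).
\end{equation*}
Substituting the main term, the Jacobian factor $e^{2t}$ of the $H\backslash G$ coordinates (the $a_t$-conjugation expands $N$ by $e^{t}$ in its two real dimensions) combines with $e^{-(2-\delta)t}$ to give $e^{\delta t}$, so the $t$-integral produces $\int^{\log T}e^{\delta t}\,dt=\tfrac{1}{\delta}T^{\delta}+O(1)$, and the $z$-integral $\int_E m^{\operatorname{BR}}(\Psi^{\epsilon}_{-z})\,dn_{-z}$ equals $\omega_{\Gamma}(E)$ up to a factor $1+O(h_E\epsilon)$ by Proposition~\ref{relation of measures}. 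Summing over orbit representatives turns $|\mu^{\operatorname{PS}}_{H}|$ into $\operatorname{sk}_{\Gamma}(\mathcal{P})$, producing precisely the predicted main term $\frac{\operatorname{sk}_{\Gamma}(\mathcal{P})}{\delta\,|m^{\operatorname{BMS}}|}T^{\delta}\omega_{\Gamma}(E)$.

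Finally I would assemble and optimize the errors. The smoothing error is controlled by $\omega_{\Gamma}(E^{+}_{\epsilon}-E^{-}_{\epsilon})=O(\epsilon^{p})$, which is exactly the regularity of $E$ (Definition~\ref{regularity condition}) transported through Proposition~\ref{relation of measures}, contributing $O(T^{\delta}\epsilon^{p})$; the equidistribution error, after integrating $e^{2t}e^{-(2-\delta+\lambda)t}$ over $t\le\log T$ and using $\mathcal{S}(\Psi^{\epsilon})\ll\epsilon^{-N}$, contributes $O(T^{\delta-\lambda}\epsilon^{-N})$; and the $t\le T_{\epsilon}$ range together with $m_0$ contributes a lower-order term once $\epsilon$ is a negative power of $T$. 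Balancing $T^{\delta}\epsilon^{p}$ against $T^{\delta-\lambda}\epsilon^{-N}$ by taking $\epsilon=T^{-\lambda/(p+N)}$ yields the stated bound with $\eta=\lambda p/(p+N)>0$. I expect the main obstacle to be precisely this last, quantitative step: one must make the effective equidistribution estimate uniform in $z\in E$ and across the entire range $0\le t\le\log T$ while honestly tracking the blow-up $\mathcal{S}(\Psi^{\epsilon})\ll\epsilon^{-N}$ of the test function, since it is the competition between this blow-up and the regularity exponent $p$ that decides whether a positive saving $\eta$ survives.
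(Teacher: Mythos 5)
Your overall route coincides with the paper's: reformulate the circle count as an orbit count via Proposition \ref{reformulation 3}, smooth with $\Psi^{\epsilon}$, decompose the counting region via Proposition \ref{B_T(E)}, feed the main range $T_{\epsilon}\leq t\leq \log T$ into the effective equidistribution theorem (Theorem \ref{effective distribution}), convert $\int_E m^{\operatorname{BR}}(\Psi^{\epsilon}_{-z})dn_{-z}$ into $\omega_{\Gamma}(E)$ by Proposition \ref{relation of measures} together with regularity, optimize $\epsilon$ against $T$, and finally sum over the finitely many $\Gamma$-orbits (the paper's Step 2, which uses Lemmas \ref{total mass of BMS} and \ref{relate measures on complex plane} to conjugate each orbit back to $\Gamma(C_0)$).

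There is, however, one genuine error in your middle step. Unfolding the Haar pairing $\langle F_T,\Psi^{\epsilon}\rangle$ over $\Gamma_H\backslash G$ produces an orbital integral against the \emph{Haar} measure $dh$ on $\Gamma_H\backslash C_0^{\dagger}$, namely $\int_{g}\int_{\Gamma_H\backslash C_0^{\dagger}}\int_M \Psi^{\epsilon}(hmg)\,dm\,dh\,d\lambda(g)$; this identity is forced by the compatibility $dg=dh\,d\lambda$ and becomes false if $dh$ is replaced by $d\mu^{\operatorname{PS}}_{H}$. Your claim that ``it is the PS density (not Haar) that governs this orbit,'' and your displayed formula whose left-hand side is $\int_{\Gamma_H\backslash H}\Psi^{\epsilon}(\Gamma ha_tn_{-z})\,d\mu^{\operatorname{PS}}_{H}(h)$, misstate the structure: the PS density enters only through the \emph{main term} of Theorem \ref{effective distribution}, which evaluates the Haar integral as $\int_{\Gamma_H\backslash C_0^{\dagger}}\Psi^{\epsilon}_{-z}(ha_t)\,dh=e^{-(2-\delta)t}\tfrac{|\mu^{\operatorname{PS}}_{H}|}{|m^{\operatorname{BMS}}|}m^{\operatorname{BR}}(\Psi^{\epsilon}_{-z})+O(S_l(\Psi^{\epsilon}_{-z})e^{-(2-\delta+\eta_0)t})$. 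Since the right-hand side you wrote is precisely this one, the two misstatements cancel and everything downstream (the Jacobian $e^{2t}$, the $e^{\delta t}$ integration, the error optimization) is sound; but as written both the unfolding identity and the quoted equidistribution statement are false, and the fix is to replace $d\mu^{\operatorname{PS}}_{H}$ by $dh$. Two smaller points. First, dismissing the range $0\leq t\leq T_{\epsilon}$ as ``independent of $T$'' is not enough: since $\epsilon$ is later taken to be a negative power of $T$, you need a bound polynomial in $\epsilon^{-1}$, and local finiteness alone gives none; the paper obtains $O(\epsilon^{-\delta})+O(1)$ by splitting this range again at a fixed level $T_1=-\log(c_1\epsilon')$, estimating the outer piece with the same equidistribution theorem and the inner piece by reversing the counting at the fixed curvature threshold $T_1$. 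Second, that $O(\epsilon^{-\delta})$ term must participate in the final balance: the paper's exponent is $\eta'=\min\{\tfrac{\delta p}{\delta+p},\tfrac{\eta_0 p}{3+l+p}\}$, whereas your $\eta=\lambda p/(p+N)$ alone is valid only after possibly shrinking the rate $\lambda$.
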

The rest of the section is devoted to the proof of Theorem \ref{effective circle counting thm}.

For $\psi \in C^{\infty}(\Gamma \backslash G)$ and $l\in \mathbb{N}$, we consider the following $L^2$-Sobolev norm of degree $l$:
\begin{equation*}
S_{l}(\psi)=\sum \lVert X(\psi)\rVert_{2},
\end{equation*}
where the sum is taken over all monomials $X$ in a fixed basis of the Lie algebra of $G$ of order at most $l$ and $\lVert X(\psi)\rVert_{2}$ is the $L^2(\Gamma \backslash G)$-norm of $X(\psi)$. For $\varphi\in C^{\infty}(\Gamma_{H}\backslash H)$, $S_l(\varphi)$ is defined similarly.

The key ingredient in the proof of Theorem \ref{effective circle counting thm} is the following effective equidistribution result:
\begin{thm} [\cite{Matrix coefficients}]
\label{effective distribution}
Assume $\Gamma$ is convex cocompact or its critical exponent $\delta$ is greater than $1$. Suppose the natural projection $\Gamma_{H}  \backslash C_{0}^{\dagger} \to \Gamma \backslash G/M$ is proper. Then there exist $\eta_0 >0$ (depending on the spectral gap data for $\Gamma$) and $l\in \mathbb{N}$ such that for any compact subset $\Omega \subset \Gamma \backslash G/M$, any $\Psi \in C^{\infty} (\Omega)$ and any bounded $\phi \in C^{\infty}(\Gamma_{H} \backslash C_0^{\dagger})$, as $t \to \infty$,

\begin{equation*}
e^{(2-\delta)t} \int _{h\in \Gamma_{H} \backslash C^{\dagger}_{0}}\Psi (ha_t)\phi (h)dh
 =\frac{\mu^{\operatorname{PS}}_{H}(\phi)}{|m^{\operatorname{BMS}}|}m^{\operatorname{BR}}(\Psi)+O(S_l(\Psi)\cdot S_l(\phi)e^{-\eta_0 t}),
\end{equation*}
where the implied constant depends only on $\Omega$.
\end{thm}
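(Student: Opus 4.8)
The plan is to deduce Theorem~\ref{effective distribution} from the effective mixing of the geodesic frame flow $\{a_t\}$ on $\Gamma\backslash G$. The deep input is an effective decay rate for matrix coefficients of $L^2(\Gamma\backslash G)$, available precisely because the hypothesis that $\Gamma$ is convex cocompact or $\delta>1$ supplies a spectral gap; I would use it in the form of effective mixing of the BMS measure, namely that there exist $\eta_0>0$ and $l\in\mathbb N$ such that
\begin{equation*}
\int_{\Gamma\backslash G}\Psi_1(ga_t)\Psi_2(g)\,dm^{\operatorname{BMS}}(g)=\frac{m^{\operatorname{BMS}}(\Psi_1)\,m^{\operatorname{BMS}}(\Psi_2)}{|m^{\operatorname{BMS}}|}+O\!\left(S_l(\Psi_1)S_l(\Psi_2)\,e^{-\eta_0 t}\right)
\end{equation*}
for all $\Psi_1,\Psi_2\in C^\infty_c(\Gamma\backslash G)$. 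The remaining work is to convert this statement into the equidistribution of the flowed measure $\mu^{\operatorname{PS}}_H$ on $C^\dagger_0$ toward the Burger--Roblin functional $m^{\operatorname{BR}}$.

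To this end I would replace the singular measure $\phi\,d\mu^{\operatorname{PS}}_H$ by a smooth function to which mixing applies. Using the local product structure of $G$ transverse to $H$ and a bump $\rho_\epsilon$ supported on an $\epsilon$-box in the transverse directions, I would form the $\Gamma$-periodization
\begin{equation*}
F_\epsilon(\Gamma g)=\sum_{\gamma\in\Gamma}(\phi\otimes\rho_\epsilon)(\gamma g)\in C^\infty_c(\Gamma\backslash G),
\end{equation*}
normalized so that it recovers $\mu^{\operatorname{PS}}_H(\phi)$ up to $O(\epsilon^{p})$. The reason the \emph{Burger--Roblin} functional, rather than the BMS one, appears on the target is geometric: as $t\to\infty$ the flowed plane spreads along the expanding unstable horosphere $N^-$ (on which $g\,n^-_w a_t=g a_t\,n^-_{e^{t}w}$, so that $a_t$ expands with Jacobian $e^{2t}$ and along which the forward endpoint $g^+$ varies), and under the $e^{(2-\delta)t}$ renormalization the push-forward of the PS-density $e^{\delta\beta_{g^+}}d\nu_j(g^+)$ along these leaves converges to the Lebesgue density $e^{2\beta_{g^+}}dm_j(g^+)$ that distinguishes $m^{\operatorname{BR}}$ from $m^{\operatorname{BMS}}$. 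Feeding $F_\epsilon$ into the mixing estimate and letting $\epsilon\to0$ after the flow, this mechanism delivers the main term $\frac{\mu^{\operatorname{PS}}_H(\phi)}{|m^{\operatorname{BMS}}|}\,m^{\operatorname{BR}}(\Psi)$, while the same Jacobian/exponent balance fixes the normalization $e^{(2-\delta)t}$.

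It remains to estimate and balance the errors. The thickening introduces an approximation error of order $\epsilon^{p}S_l(\Psi)S_l(\phi)$, while feeding $F_\epsilon$ into the mixing estimate costs a factor $S_l(F_\epsilon)\ll\epsilon^{-a}S_l(\phi)$ from differentiating the $\epsilon$-bump, so the mixing contribution is $O(\epsilon^{-a}S_l(\Psi)S_l(\phi)\,e^{-\eta_0 t})$; choosing $\epsilon=e^{-\kappa t}$ with $\kappa>0$ small balances the two and produces a clean rate $e^{-\eta_0' t}$ with $\eta_0'>0$. The properness hypothesis on $\Gamma_H\backslash C^\dagger_0\to\Gamma\backslash G/M$ enters essentially here: for $\Psi$ supported in the fixed compact set $\Omega$ it confines the contributing part of the possibly non-compact normal bundle to a fixed compact region, keeping the injectivity radius and the transverse product structure uniform so that the constants $p$ and $a$ do not degenerate.

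I expect the genuine difficulty to lie at this interface --- obtaining the effective mixing with an explicit, spectral-gap-controlled exponent $\eta_0$ and then propagating it uniformly through the $t$-dependent thickening near the non-compact directions. By contrast, the measure-theoretic identifications, together with the Jacobian bookkeeping producing $e^{(2-\delta)t}$, are routine once the geometry of $H$, $A$ and $N^-$ is in place.
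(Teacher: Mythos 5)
The paper itself does not prove this theorem: it is quoted from \cite{Matrix coefficients}, with only a remark that the proof there rests on exponential mixing of the geodesic flow. Your starting point matches that remark, but there is a genuine gap at the center of your argument. You propose to feed the thickening $F_\epsilon$ of $\phi\,d\mu^{\operatorname{PS}}_{H}$ into effective mixing of the \emph{BMS} measure. That pairing can only ever produce $m^{\operatorname{BMS}}(\Psi)$ in the main term: $m^{\operatorname{BMS}}$ is $a_t$-invariant, and the right-hand side of the mixing input you quote is the product $m^{\operatorname{BMS}}(\Psi_1)\,m^{\operatorname{BMS}}(\Psi_2)/|m^{\operatorname{BMS}}|$, so no normalization of $F_\epsilon$ and no choice $\epsilon=e^{-\kappa t}$ can convert the functional $\Psi\mapsto m^{\operatorname{BMS}}(\Psi)$ into $\Psi\mapsto m^{\operatorname{BR}}(\Psi)$. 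What your argument proves, if completed, is the unrenormalized statement $\int\Psi(ha_t)\phi(h)\,d\mu^{\operatorname{PS}}_{H}(h)\to \mu^{\operatorname{PS}}_{H}(\phi)\,m^{\operatorname{BMS}}(\Psi)/|m^{\operatorname{BMS}}|$, which is a different theorem; note that the theorem at hand integrates $\Psi(ha_t)\phi(h)$ against the \emph{Haar} measure $dh$ on $\Gamma_H\backslash C_0^\dagger$, with $\mu^{\operatorname{PS}}_{H}$ appearing only on the right. Your proposed bridge --- that ``the push-forward of the PS-density along unstable leaves converges to the Lebesgue density'' --- is not a mechanism at all: for $\delta<2$ these two boundary densities are mutually singular, and no Jacobian bookkeeping transports one into the other.

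What the cited proof actually does is pair with respect to Haar measure. The true key input is effective decay of Haar matrix coefficients: $e^{(2-\delta)t}\int_{\Gamma\backslash G}\Psi(ga_t)\Phi(g)\,dg$ converges with exponential rate to $m^{\operatorname{BR}}(\Psi)\,m^{\operatorname{BR}}_{*}(\Phi)/|m^{\operatorname{BMS}}|$, where $m^{\operatorname{BR}}_{*}$ denotes the Burger--Roblin measure with the roles of the forward and backward endpoints exchanged. For $\delta>1$ this comes directly from the $L^2$-spectral gap (Lax--Phillips); for convex cocompact groups with $\delta\le 1$ it must be \emph{deduced} from Stoyanov's BMS mixing by a separate argument comparing Lebesgue and PS conditional measures along the horospherical foliations --- exactly the conversion your plan labels routine, and where the real difficulty lives. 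Given this input, one thickens the $H$-orbit only along the $AN$ transverse directions, which are non-expanding under right multiplication by $a_t$ (indeed $a_{-t}\,a_sn_z\,a_t=a_sn_{e^{-t}z}$), so that $\Psi(hua_t)$ stays $\epsilon$-close to $\Psi(ha_t)$ uniformly in $t$; a bump on an arbitrary transverse $\epsilon$-box, as in your construction, has an $N^-$-component that is sheared by $e^{t}$ and destroys the unfolding. Unfolding then recovers $\int\Psi(ha_t)\phi(h)\,dh$, and the identification $m^{\operatorname{BR}}_{*}(F_\epsilon)=(1+O(\epsilon))\,\mu^{\operatorname{PS}}_{H}(\phi)$ --- valid because the conditional of $m^{\operatorname{BR}}_{*}$ along $H$-leaves is precisely $\mu^{\operatorname{PS}}_{H}$ --- is what puts $\mu^{\operatorname{PS}}_{H}(\phi)$ and the rate $e^{(2-\delta)t}$ into the answer. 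Your $\epsilon=e^{-\kappa t}$ balancing and your reading of the properness hypothesis are fine, but they sit on top of the wrong pairing.
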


\begin{rem}
\begin{enumerate}
\item If $\delta>1$, then $\Gamma$ is Zariski dense (Lemma 2.11 in \cite{Ergodicity of Kleinian groups}).


\item  Strictly speaking, Theorem \ref{effective distribution} in \cite{Matrix coefficients} is shown using the exponential mixing of geodesic flow on $T^1(\Gamma\backslash \mathbb{H}^3)$. Such an exponential mixing is provided in \cite{Stoyanov} and \cite{PetSto} for $\Gamma$ convex cocompact and in \cite{Matrix coefficients} for $\Gamma$ with critical exponent greater than 1.

\end{enumerate}
\end{rem} 


For every $T>1$, and $0<\epsilon<1$, denote
\begin{align*}
V^{+}_{T,\epsilon}(E^{+}_{2\epsilon})&:=\{a_sn_{-z}:z\in E^{+}_{2\epsilon},\,\,-\epsilon\leq s\leq \log T+\epsilon\},\\
V^{-}_{T,\epsilon}(E^{-}_{2\epsilon})&:=\{a_sn_{-z}:z\in E^{-}_{2\epsilon},\,\epsilon \leq s\leq \log T-\epsilon\}.
\end{align*}

\begin{lem}[Lemma 6.3 in \cite{Asymptotic}]
\label{stability of KAN-decomposition}
There exists $c_3>0$, such that for all $T>1$ and for all small $\epsilon>0$,
\begin{align*}
&KA^{+}_{\log T} N_{-E^{+}_{\epsilon}}U_{c_3\epsilon} \subset KV^{+}_{T,\epsilon}(E^{+}_{2\epsilon}),\\
&KV^{-}_{T,\epsilon}(E^{-}_{2\epsilon})U_{c_3\epsilon} \subset KA^{+}_{\log T}N_{-E}.
\end{align*}
\end{lem}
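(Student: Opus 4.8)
The plan is to recognise that both inclusions assert the same thing: \emph{stability of the Iwasawa $KAN$-coordinates under a small right perturbation}, uniformly as the $A^{+}$-parameter ranges over the unbounded interval $[0,\log T]$. Since $G=KAN$ and $K$ appears as a left factor in every set involved (the sources $KA^{+}_{\log T}N_{-E^{+}_{\epsilon}}$ and $KV^{-}_{T,\epsilon}(E^{-}_{2\epsilon})$ as well as the targets $KV^{+}_{T,\epsilon}$ and $KA^{+}_{\log T}N_{-E}$ are all left $K$-saturated), it suffices to control the $A$- and $N$-components of the Iwasawa decomposition, which are left-$K$-invariant. Concretely, I would write a typical element of either source as $k\,a_{s}n_{-z}\,u$ with $k\in K$, $s\ge 0$, $z$ in a fixed bounded set, and $u\in U_{c_{3}\epsilon}$, Iwasawa-decompose $a_{s}n_{-z}u=\kappa\,a_{\tau}n_{\zeta}$ with $\kappa\in K$, and show that $\tau=s+O(c_{3}\epsilon)$ and $\zeta=-z+O(c_{3}\epsilon)$, with implied constants uniform in $s\ge 0$ (depending on $\sup_{E}|z|$, hence on $E$). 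The whole element is then $(k\kappa)\,a_{\tau}n_{\zeta}$ with $k\kappa\in K$, which is exactly the form required by the targets.

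For the computation I would lift to $\operatorname{SL}_{2}(\mathbb{C})$ and read the Iwasawa $A$- and $N$-components off the columns: if $g=\kappa a_{\tau}n_{\zeta}$ with $\kappa$ unitary, then, for the standard Hermitian pairing on $\mathbb{C}^{2}$ and the standard basis $e_{1},e_{2}$, one has $\tau=2\log\|ge_{1}\|$ and $\zeta=\langle ge_{1},ge_{2}\rangle/\|ge_{1}\|^{2}$. The key structural fact is that $a_{s}$ scales the $e_{1}$-direction by $e^{s/2}$ and the $e_{2}$-direction by $e^{-s/2}$, while $n_{-z}e_{1}=e_{1}$, so $ge_{1}$ lies in the \emph{expanding} direction. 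Writing $ue_{1}=e_{1}+\xi$ and $ue_{2}=e_{2}+\eta$ with $|\xi|,|\eta|=O(c_{3}\epsilon)$ and applying $n_{-z}$ and then $a_{s}$ gives
\begin{align*}
a_{s}n_{-z}u\,e_{1}&=\bigl(e^{s/2}(1+\xi_{1}-z\xi_{2}),\,e^{-s/2}\xi_{2}\bigr),\\
a_{s}n_{-z}u\,e_{2}&=\bigl(e^{s/2}(-z+\eta_{1}-z\eta_{2}),\,e^{-s/2}(1+\eta_{2})\bigr),
\end{align*}
whence $\|a_{s}n_{-z}u\,e_{1}\|^{2}=e^{s}\bigl(|1+\xi_{1}-z\xi_{2}|^{2}+e^{-2s}|\xi_{2}|^{2}\bigr)$, so $\tau-s=\log(1+O(c_{3}\epsilon))=O(c_{3}\epsilon)$; and after dividing the Hermitian pairing by $\|a_{s}n_{-z}u\,e_{1}\|^{2}$ the factor $e^{s}$ cancels, leaving $\zeta=-z+O(c_{3}\epsilon)$. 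The essential feature, and the main obstacle, is exactly this uniformity in the unbounded parameter $s$: although $a_{s}$ badly distorts individual group elements, normalising against the expanding $e_{1}$-column absorbs the factor $e^{s}$, and every residual term carries the contracting factor $e^{-2s}\le 1$ and only improves the estimate. (Because the constants involve $\sup_{E}|z|$, the eventual $c_{3}$ will depend on $E$, consistent with the appearance of $h_{E}$ elsewhere in the paper.)

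Finally I would convert these two bounds into the stated inclusions by taking $c_{3}$ small. For the $A$-range: in the first inclusion $s\in[0,\log T]$ forces $\tau\in[-\epsilon,\log T+\epsilon]$, and in the second $s\in[\epsilon,\log T-\epsilon]$ forces $\tau\in[0,\log T]$, once $c_{3}$ is small enough that the $O(c_{3}\epsilon)$ displacement is at most $\epsilon$. For the $N$-component, write $\zeta=-(z-\theta)$ with $|\theta|=O(c_{3}\epsilon)$, so the new translation parameter is $z'=z-\theta=n_{-\theta}\cdot z$, the image of $z$ under the small M\"obius element $n_{-\theta}\in U_{c|\theta|}$. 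Choosing $c_{3}$ so that $n_{-\theta}\in U_{\epsilon}$, in the first case $z\in E^{+}_{\epsilon}=U_{\epsilon}E$ gives $z'\in U_{\epsilon}U_{\epsilon}E\subset U_{2\epsilon}E=E^{+}_{2\epsilon}$, while in the second case $z\in E^{-}_{2\epsilon}=\bigcap_{u\in U_{2\epsilon}}uE$ is $U_{2\epsilon}$-stable, so $z'=n_{-\theta}\cdot z\in E$. Substituting back into $a_{\tau}n_{\zeta}$ and restoring the left factor $k\kappa\in K$ yields $k\,a_{s}n_{-z}\,u\in KV^{+}_{T,\epsilon}(E^{+}_{2\epsilon})$ in the first case and $k\,a_{s}n_{-z}\,u\in KA^{+}_{\log T}N_{-E}$ in the second, establishing both inclusions with a single $c_{3}>0$.
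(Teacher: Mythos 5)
Your proof is correct, and it is worth noting that the paper itself offers no proof of this lemma at all: it is imported verbatim from Oh--Shah \cite{Asymptotic} (their Lemma 6.3), so your argument supplies a genuinely self-contained verification rather than paralleling anything in the text. The mechanism you use is the same one underlying the source's argument and the analogous Lemma \ref{structure analysis 2} of this paper, but packaged differently. The group-theoretic route writes the perturbation $u$ in product coordinates (e.g. $N A M N^{-}$), absorbs the $N$-component of $u$ into the $N_{-E}$ factor, and conjugates the remaining small $N^{-}AM$ piece across $a_s$, where $a_s n^{-}_w a_{-s}=n^{-}_{e^{-s}w}$ contracts for $s\geq 0$; the leftover small element is then Iwasawa-decomposed and its $K$-part pushed to the left. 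You instead compute the Iwasawa coordinates of $a_s n_{-z}u$ directly from the matrix columns, with uniformity in $s$ obtained by normalizing against the expanding column so that every residual term carries a factor $e^{-2s}\leq 1$ --- the same contraction, seen in linear-algebra rather than group-theoretic form. Your version is more elementary and explicit; the conjugation version is what generalizes beyond $\operatorname{PSL}_2(\mathbb{C})$. Two points you handle that a careless write-up would fumble, and which deserve to be kept in any final version: first, the sets $E^{\pm}_{\epsilon}$ are defined by the $U_{\epsilon}$-action, not by Euclidean neighborhoods, and you correctly convert the Euclidean displacement $|\theta|=O(c_3\epsilon)$ into membership statements via the group element $n_{-\theta}\in U_{\epsilon}$, the inclusion $U_{\epsilon}U_{\epsilon}\subset U_{2\epsilon}$ (valid because the metric is left-invariant), and the symmetry of $U_{\epsilon}$ when unwinding $E^{-}_{2\epsilon}=\bigcap_{u\in U_{2\epsilon}}uE$; second, you record that $c_3$ depends on $E$ through $\sup_{z\in E}|z|$, which is harmless since $E$ is fixed throughout the section.
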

For simplicity, denote the subsets:
\begin{equation}
\label{two sets}
W^{+}_{T,\epsilon}:=H\backslash HKV^{+}_{T,\epsilon}(E^{+}_{2\epsilon})\,\,\,\text{and}\,\,\, W^{-}_{T,\epsilon}:=H\backslash HKV^{-}_{T,\epsilon}(E^{-}_{2\epsilon}).
\end{equation}
Define the counting functions $F^{\epsilon,\pm}_{T}$ on $\Gamma \backslash G$:
\begin{equation*}
F^{\epsilon,+}_{T}(g) :=\sum_{\gamma \in \Gamma_{H}\backslash \Gamma} \chi_{W^{+}_{T,\epsilon}}([e]\gamma g)\,\,\,\text{and}\,\,\,
F^{\epsilon,-}_{T}(g) :=\sum_{\gamma \in \Gamma_{H}\backslash \Gamma} \chi_{W^{-}_{T,\epsilon}}([e]\gamma g).
\end{equation*}
Let $c_3$ be as Lemma \ref{stability of KAN-decomposition}. The following lemma can  easily be deduced from Proposition \ref{reformulation 3} and Lemma \ref{stability of KAN-decomposition}.
\begin{lem}[cf. Lemma 6.4 in \cite{Asymptotic}]
Given any small $\epsilon>0$, we have, for all $g\in U_{c_3\epsilon}$ and $T>\frac{1}{c_0\epsilon}$
\begin{equation}
\label{counting inequality}
F^{\epsilon,-}_{T}(g)-m_0 \leq N_T(\Gamma(C_0),E) \leq F^{\epsilon,+}_{T}(g),
\end{equation}
where $m_0$ is a positive integer depending only on $E$.
\end{lem}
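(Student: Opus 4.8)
The plan is to deduce the counting inequality (\ref{counting inequality}) directly from Proposition \ref{reformulation 3} together with the stability inclusions of Lemma \ref{stability of KAN-decomposition}, by unwinding the definitions of the sets $W^{\pm}_{T,\epsilon}$ and the counting functions $F^{\epsilon,\pm}_{T}$. The essential observation is that for $g\in U_{c_3\epsilon}$, multiplying the defining domains $V^{\pm}_{T,\epsilon}$ on the right by $g$ stays inside (resp.\ contains) the relevant $KA^{+}_{\log T}N_{-E^{\pm}}$ domains, so that the membership $[e]\gamma g\in W^{\pm}_{T,\epsilon}$ can be transferred to membership of $[e]\gamma$ in the sets $B_T(E^{\pm}_{\epsilon})$ appearing in Proposition \ref{reformulation 3}.

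First I would rewrite the counting functions in geometric terms. By definition $F^{\epsilon,+}_{T}(g)=\#\{\gamma\in\Gamma_H\backslash\Gamma : [e]\gamma g\in W^{+}_{T,\epsilon}\}$, and since $W^{+}_{T,\epsilon}=H\backslash HKV^{+}_{T,\epsilon}(E^{+}_{2\epsilon})$, the condition $[e]\gamma g\in W^{+}_{T,\epsilon}$ is equivalent to $\gamma g\in HKV^{+}_{T,\epsilon}(E^{+}_{2\epsilon})$, i.e.\ $\gamma\in HKV^{+}_{T,\epsilon}(E^{+}_{2\epsilon})g^{-1}$. Since $g\in U_{c_3\epsilon}$ forces $g^{-1}\in U_{c_3\epsilon}$, the second inclusion of Lemma \ref{stability of KAN-decomposition} (applied with $E$ replaced appropriately, after relabeling the neighborhood parameter) gives $KV^{+}_{T,\epsilon}(E^{+}_{2\epsilon})g^{-1}\supset$ a domain that, when combined with the $KAN$ reformulation, contains $KA^{+}_{\log T}N_{-E^{+}_{\epsilon}}$; dually $KV^{-}_{T,\epsilon}(E^{-}_{2\epsilon})g^{-1}\subset KA^{+}_{\log T}N_{-E}$. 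Passing to the quotient $H\backslash G$ these become the inclusions $B_T(E^{+}_{\epsilon})\subset W^{+}_{T,\epsilon}g^{-1}$ and $W^{-}_{T,\epsilon}g^{-1}\subset B_T(E)$ of subsets of $H\backslash G$.

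From these set inclusions the chain of inequalities is immediate. For the upper bound, $\#([e]\Gamma\cap B_T(E^{+}_{\epsilon}))\le F^{\epsilon,+}_{T}(g)$ since every lattice point counted on the left corresponds to a coset $\gamma\in\Gamma_H\backslash\Gamma$ landing in the larger set $W^{+}_{T,\epsilon}g^{-1}$; combined with the right-hand inequality of Proposition \ref{reformulation 3}, namely $N_T(\Gamma(C_0),E)\le\#([e]\Gamma\cap B_T(E^{+}_{\epsilon}))$, this yields $N_T(\Gamma(C_0),E)\le F^{\epsilon,+}_{T}(g)$. For the lower bound, $F^{\epsilon,-}_{T}(g)\le\#([e]\Gamma\cap B_T(E))$ from $W^{-}_{T,\epsilon}g^{-1}\subset B_T(E)$, and then the left-hand inequality of Proposition \ref{reformulation 3}, $\#([e]\Gamma\cap B_T(E))-m_0\le N_T(\Gamma(C_0),E)$, gives $F^{\epsilon,-}_{T}(g)-m_0\le N_T(\Gamma(C_0),E)$. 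Throughout one must keep the hypotheses $T>\frac{1}{c_0\epsilon}$ and $\epsilon$ small, which are exactly what Proposition \ref{reformulation 3} and Lemma \ref{stability of KAN-decomposition} require.

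The main obstacle will be bookkeeping the neighborhood parameters consistently: the two ingredients are stated with slightly different enlargements ($E^{+}_{\epsilon}$ versus $E^{+}_{2\epsilon}$, and the $U_{c_3\epsilon}$-thickening), so I must verify that the perturbation by $g\in U_{c_3\epsilon}$ is absorbed correctly and that the inclusions compose in the stated direction without losing the effective control on the parameters. This is a careful but routine matching of constants rather than a substantive difficulty, which is why the lemma is asserted to follow ``easily.''
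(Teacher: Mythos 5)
Your proposal is correct and is precisely the argument the paper intends: the paper gives no proof beyond asserting that the lemma ``can easily be deduced from Proposition \ref{reformulation 3} and Lemma \ref{stability of KAN-decomposition},'' and your unwinding---using that $U_{c_3\epsilon}$ is symmetric to transfer $[e]\gamma g\in W^{\pm}_{T,\epsilon}$ into $[e]\gamma\in B_T(E^{+}_{\epsilon})$ (resp.\ $[e]\gamma\in B_T(E)$) and then quoting the two inequalities of Proposition \ref{reformulation 3}---is exactly that deduction. One trivial bookkeeping slip: the inclusion $B_T(E^{+}_{\epsilon})\subset W^{+}_{T,\epsilon}g^{-1}$ follows directly from the \emph{first} inclusion of Lemma \ref{stability of KAN-decomposition} (right-multiply $KA^{+}_{\log T}N_{-E^{+}_{\epsilon}}$ by $g\in U_{c_3\epsilon}$), not from the second inclusion relabeled; the second inclusion is what you correctly use for the lower bound.
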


\begin{proof}[\textbf{Proof of Theorem \ref{effective circle counting thm}}]
\textbf{Step 1:} We first prove the theorem for $\mathcal{P}=\Gamma (C_0)$.

For $\epsilon>0$, let $\psi^{\epsilon}$ be a non-negative function in $C^{\infty}_{c}(G)$ supported in $U_{c_3\epsilon}$ with integral one. Denote by $\Psi^{\epsilon}\in C^{\infty}_{c}(\Gamma \backslash G)$ the $\Gamma$-average of $\psi^{\epsilon}$. For $T>\frac{1}{c_0\epsilon}$, integrating (\ref{counting inequality}) against $\Psi^{\epsilon}$, we obtain
\begin{equation*}
\langle F^{\epsilon,-}_{T}, \Psi^{\epsilon}\rangle-m_0 \leq N_T(\Gamma(C_0),E) \leq \langle F^{\epsilon,+}_{T}, \Psi^{\epsilon}\rangle.
\end{equation*}

By abusing notation, we use $dh$ to denote the Haar measures on $H$ and $H/M$. We require that these two measures are compatible with the probability measure $dm$ on $M$. The following defines a Haar measure on $G$: for $g=ha_rk\in HA^{+}K$,
\begin{equation*}
dg=4\sinh r\cdot \cosh r dh dr dm_j(k),
\end{equation*}
where $dm_j(k):=dm_j(kX_0^{+})$. Denote by $d\lambda$ the unique $G$-invariant measure on $H\backslash G$ which is compatible with $dg$ and $dh$.

For $\langle F^{\epsilon,+}_{T},\Psi^{\epsilon}\rangle$, we have
\begin{align}
\label{counting equation}
\langle F^{\epsilon,+}_{T}, \Psi^{\epsilon}\rangle &=\int_{\Gamma \backslash G} \sum_{\gamma \in \Gamma_{H}\backslash \Gamma} \chi_{W^{+}_{T,\epsilon}}([e]\gamma g)\Psi^{\epsilon}(g) dg\\
&=\int_{g\in \Gamma_{H}\backslash G}  \chi_{W^{+}_{T,\epsilon}}([e] g)\Psi^{\epsilon}(g)dg \nonumber\\
&= \int_{g \in W^{+}_{T,\epsilon}} \int_{h \in \Gamma_H \backslash C^{\dagger}_0} \int_{m\in M} \Psi^{\epsilon}(hmg) dm dh d\lambda(g)\nonumber.
\end{align}

Consider the set $W^{+}_{T,\epsilon}$ defined in (\ref{two sets}). We can rewrite it in the following form by  Lemma \ref{structure analysis 1} (1)
\begin{equation*}
W^{+}_{T,\epsilon}=\bigcup_{0\leq s\leq \log T+\epsilon}H\backslash Ha_sK(s)N_{-E^{+}_{2\epsilon}},
\end{equation*}
where the set $K(\cdot)$ is defined as Proposition \ref{rewrite B_T(E)}. Applying Proposition \ref{rewrite B_T(E)}, we get:
\begin{align}
\label{decomposition by t}
W^{+}_{T,\epsilon}&\subset \bigcup_{0\leq t\leq T_{\epsilon}}H\backslash Ha_tK(t)N_{-E^{+}_{2\epsilon}}\,\cup\bigcup_{T_{\epsilon}-\rho_1\epsilon\leq t\leq \log T+\rho_1\epsilon}H\backslash Ha_tN_{-E^{+}_{\rho_1\epsilon}}\\
&:=V_1\cup V_2,\nonumber
\end{align}
where $T_{\epsilon}=-\log (c_1\epsilon)$ and $\rho_1>0$ is some constant.

 Notice that the measure $e^{2t}dtdn$ is a right invariant measure of $AN$ and $[e]AN$ is an open subset in $H\backslash G$. Hence $d \lambda (a_tn)$ (restricted to $[e]AN$) and $e^{2t}dtdn$ are constant multiplies of each other. It follows from the formula of $dg$ that $d\lambda (a_tn)=e^{2t}dt dn$. Besides, observe that the local finiteness of $\Gamma(C_0)$ implies the map $\Gamma_H\backslash C_0^{\dagger}\to \Gamma\backslash G/M$ is proper. Using Theorem \ref{effective distribution}, we have
\begin{align}
\label{integral 8}
&\int_{V_2}\int_{\Gamma_{H}\backslash C^{\dagger}_0} \int_{M} \Psi^{\epsilon}(hmg)dm dh d\lambda (g)\\
= & \int_{z \in E^{+}_{\rho_1\epsilon}} \int_{T_{\epsilon}-\rho_1\epsilon}^{\log T+\rho_1\epsilon} e^{2s} \int_{\Gamma_{H}\backslash C^{\dagger}_0} \int_{M} \Psi^{\epsilon} (hma_sn_{-z})dm dh ds dn_{-z} \nonumber\\
\leq & \int_{z \in E^{+}_{\rho_1\epsilon}} \int_{T_{\epsilon}-\rho_1\epsilon}^{\log T+\rho_1\epsilon} \frac{|\mu^{\operatorname{PS}}_{H}|}{|m^{\operatorname{BMS}}|} m^{\operatorname{BR}}(\Psi^{\epsilon}_{-z})e^{\delta s} +\rho_2\cdot(S_l(\Psi^{\epsilon}_{-z})e^{(\delta-\eta_0)s}) ds dn_{-z} \nonumber\\
\leq & \frac{|\mu^{\operatorname{PS}}_{H}|}{\delta \,|m^{\operatorname{BMS}}|}T^{\delta}(1+\rho_3\cdot\epsilon) \,\int_{z \in E^{+}_{\rho_1\epsilon}} m^{\operatorname{BR}}(\Psi^{\epsilon}_{-z}) dn_{-z}\, +\rho_3\cdot(S_l(\Psi^{\epsilon})T^{\delta-\eta_0}), \nonumber
\end{align}
where $\Psi^{\epsilon}_{-z}(g)=\int_{m\in M}\Psi^{\epsilon}(gmn_{-z})dm$ and $\rho_2,\,\rho_3>0$ are some constants. To obtain the last inequality above, we use the estimate that $\sup_{z\in E^{+}_{\rho_1\epsilon}} S_l(\Psi^{\epsilon}_{-z})\ll S_l(\Psi^{\epsilon})$ because for any monomial $X$ of order 1, we have $X(\Psi^{\epsilon}_{-z})(g)=\int_{m\in M}\operatorname{Ad}_{n_zm}X(\Psi^{\epsilon})(gmn_{-z})dm$.

Since $E$ is bounded and regular, Proposition \ref{relation of measures} implies
\begin{align*}
\int_{z \in E^{+}_{\rho_1\epsilon}} m^{\operatorname{BR}}(\Psi^{\epsilon}_{-z})dn_{-z} 
\leq & (1+\rho_4\cdot\epsilon)\cdot \omega_{\Gamma} (E^{+}_{\rho_4\epsilon})\\
\leq & (1+\rho_5\cdot\epsilon^p)\,\omega_{\Gamma}(E),
\end{align*}
 where  $\rho_4,\,\rho_5>0$ are some constants and $p$ is the constant appearing in the definition of  the regularity of $E$ (Definition \ref{regularity condition}).

Therefore,
\begin{align}
\label{integral 9}
&\int_{V_2}\int_{\Gamma_{H}\backslash C_{0}^{\dagger}} \int_{M} \Psi^{\epsilon}(hmg)dm dh d\lambda(g)\\
\leq & \frac{|\mu^{\operatorname{PS}}_{H}|}{\delta \,|m^{\operatorname{BMS}}|}T^{\delta} \omega_{\Gamma} (E)+ \rho_6\cdot(\epsilon^{p}T^{\delta}+\epsilon^{-(3+l)}T^{\delta-\eta_0}) && (\text{for some}\,\rho_6>0),\nonumber
\end{align}
where we use the estimate $S_l(\Psi^{\epsilon})=O(\epsilon^{-(3+l)})$ since dim $G$=6.

Consider $V_1$ in (\ref{decomposition by t}). Fix small $\epsilon'$ such that $\epsilon'>\epsilon$ and it satisfies Proposition \ref{rewrite B_T(E)}.  We decompose $V_1$ into two parts using Proposition \ref{rewrite B_T(E)}:
\begin{align*}
V_1 &\subset \bigcup_{0\leq t \leq T_1} H\backslash Ha_tK(t)N_{-E^{+}_{2\epsilon}} \cup \bigcup_{T_1-\rho_7\epsilon'\leq t\leq T_{\epsilon}+\rho_7\epsilon'} H\backslash Ha_tN_{-E^{+}_{\rho_7\epsilon'}}\\
&=: V_3 \cup V_4,
\end{align*}
where $T_1:=-\log (c_1 \epsilon')$ and $\rho_7>0$ is some constant.

For $V_4$, by the similar way as we get (\ref{integral 9}), we have
\begin{align*}
&\int_{V_4} \int_{\Gamma_{H} \backslash C_0^{\dagger}} \int_{M} \Psi^{\epsilon}(hmg) dm dh d\lambda(g)\\
\leq & 8\, e^{\delta T_{\epsilon}} \frac{|\mu^{\operatorname{PS}}_{H}|}{\delta \,|m^{\operatorname{BMS}}|} \omega_{\Gamma} (E^{+}_{\rho_{8}\epsilon'}),
\end{align*}
for some constant $\rho_{8}>0$.

Since $E$ is bounded and $\epsilon'$ is fixed, $\omega_{\Gamma} (E^{+}_{\rho_{8}\epsilon'})=O(1)$. As $T_{\epsilon}=-\log(c_1\epsilon)$, we get
\begin{align}
\label{integral 10}
\int_{V_4} \int_{\Gamma_{H} \backslash C_0^{\dagger}} \int_{M} \Psi^{\epsilon}(hmg) dm dh d\lambda(g) =O(\epsilon^{-\delta}).
\end{align}

For $V_3$, reversing the process of translating the circle counting into orbit counting, we have
\begin{align}
\label{integral 11}
&\int_{V_3} \int_{\Gamma_{H}\backslash C_0^{\dagger}} \int_{M} \Psi^{\epsilon}(hmg)dm dh d\lambda(g)\\
=& O(\#\{C\in \Gamma (C_0): C\cap E^{+}_{2\epsilon}\neq \emptyset, \operatorname{Curv}(C)\leq T_1\}) \nonumber\\
=& O(1),\nonumber
\end{align}
as $T_1$ is fixed. 

Adding (\ref{integral 9}), (\ref{integral 10}) and (\ref{integral 11}) together, we get
\begin{align*}
\langle F^{\epsilon,+}_{T}, \Psi^{\epsilon}\rangle \leq \frac{|\mu^{\operatorname{PS}}_{H}|}{\delta \,|m^{\operatorname{BMS}}|} T^{\delta} \omega_{\Gamma}(E)+\rho_{9}\cdot(\epsilon^p T^{\delta}+\epsilon^{-(3+l)}T^{\delta-\eta_0}+\epsilon^{-\delta}),
\end{align*}
for some constant $\rho_{9}>0$.

Set $\eta'=\min\{\frac{\delta p}{\delta +p}, \frac{\eta_0 p}{3+l+p}\}$ and $\epsilon=\max\{T^{\frac{\eta'}{\delta}-1}, T^{\frac{\eta'-\eta_0}{3+l}}\}$. Then $T>\frac{1}{c_0\epsilon}$. Hence 
\begin{equation*}
N_T(\Gamma (C_0), E) \leq \langle F^{\epsilon,+}_{T}, \Psi^{\epsilon}\rangle \leq \frac{|\mu^{\operatorname{PS}}_{H}|}{\delta \, |m^{\operatorname{BMS}}|} T^{\delta}\omega_{\Gamma}(E)+\rho_{9}\cdot T^{\delta-\eta'}.
\end{equation*}

For $\langle F^{\epsilon,-}_{T}, \Psi^{\epsilon}\rangle$,  the definition of $W^{-}_{T,\epsilon}$ (\ref{two sets}) implies the following inclusion:
\begin{equation*}
 W^{-}_{T,\epsilon}\supset \bigcup_{T_2\leq t \leq \log T-\epsilon}H\backslash Ha_tN_{-E^{-}_{2\epsilon}},
 \end{equation*}
 where $T_2$ is some large fixed number. 
 
 Using similar argument as above, we have
\begin{equation}
\langle F^{\epsilon,-}_{T}, \Psi^{\epsilon}\rangle \geq \frac{|\mu^{\operatorname{PS}}_{H}|}{\delta \, |m^{\operatorname{BMS}}|} T^{\delta}\omega_{\Gamma}(E)+\rho_{10} \cdot T^{\delta-\frac{\eta_{0} p}{3+l+p}},
\end{equation}
for some constant $\rho_{10}>0$.

Therefore, there exists $\eta>0$ so that as $T\to \infty$, we have
\begin{equation}
N_T(\Gamma(C_0),E)=\frac{|\mu^{\operatorname{PS}}_{H}|}{\delta \, |m^{\operatorname{BMS}}|} \,T^{\delta}\,\omega_{\Gamma}(E)+O (T^{\delta-\eta}).
\end{equation}

\textbf{Step 2:} We prove the theorem for a general circle packing.

Let $C$ be any circle in the circle packing $\mathcal{P}$. Denote the radius of $C$ by $r$ and the center of $C$ by $z_0$. Set
\begin{equation*}
g_C:=n_{z_0}a_{\log r}=\begin{pmatrix} 1 & z_0\\ 0 & 1\end{pmatrix} \begin{pmatrix}  \sqrt{r} & 0\\ 0 & \sqrt{r^{-1}}\end{pmatrix}.
\end{equation*}
Then $g_C(C_0)=C$. And 
\begin{align*}
N_T(\Gamma(C),E) &= N_{rT}(g_C^{-1}\Gamma g_C(C_0), g_C^{-1}(E)).
\end{align*}

\begin{lem}[Lemma 6.5 in \cite{Asymptotic}]
\label{total mass of BMS}
\begin{equation*}
|m^{\operatorname{BMS}}_{g_C^{-1}\Gamma g_C}|=|m^{\operatorname{BMS}}_{\Gamma}|.
\end{equation*}
\end{lem}

\begin{lem}[Lemma 6.7 in \cite{Asymptotic}]
\label{relate measures on complex plane}
For any bounded Borel subset $E\subset \mathbb{C}$,
\begin{equation*}
\omega_{g_C^{-1}\Gamma g_C}(g_C^{-1}(E))=r^{-\delta}\omega_{\Gamma}(E).
\end{equation*}
\end{lem}
Using the above two lemmas, we obtain
\begin{align}
\label{conclusion}
 N_{T}(\Gamma(C),E)
 =&\frac{|\mu^{\operatorname{PS}}_{g_C^{-1}\Gamma g_C,H}|}{\delta \cdot |m^{\operatorname{BMS}}_{g_C^{-1}\Gamma g_C}|} (rT)^{\delta}\omega_{g_C^{-1}\Gamma g_C}(g_C^{-1}(E))+O (T^{\delta-\eta})\\
=&\frac{|\mu^{\operatorname{PS}}_{g_C^{-1}\Gamma g_C,H}|}{\delta \cdot |m^{\operatorname{BMS}}_{\Gamma}|} T^{\delta}\omega_{\Gamma}(E)+O (T^{\delta-\eta}).\nonumber
\end{align}
Since $\mathcal{P}$ consists of finitely many $\Gamma$-orbits,  (\ref{conclusion}) implies our theorem.
\end{proof}

\section{Effective circle count for circle packing in  ideal triangle of $\mathbb{H}^2$}

We use the upper half plane model for $\mathbb{H}^2$:
\begin{equation*}
\mathbb{H}^2=\{z\in \mathbb{C}:\operatorname{Im}z>0\}.
\end{equation*} 

For any circle packing $\mathcal{P}$ contained in $\mathbb{H}^2$ and any $t>0$, we define the following counting function:
\begin{equation*}
N_t(\mathcal{P}):=\#\{C \in \mathcal{P}: \operatorname{Area}_{\operatorname{hyp}}(C)>t\},
\end{equation*}
where $\operatorname{Area}_{\operatorname{hyp}} (C)$ is the hyperbolic area of the disk enclosed by $C$.

Let $\mathcal{T}$ be the ideal triangle in $\mathbb{H}^2$ whose three sides are given by the circles $\{x=\pm1\}$ and $\{z\in \mathbb{C}:|z|=1\}$. Let $\mathcal{P}(\mathcal{T})$ be the circle packing attained by filling in the largest inner circles. This section is devoted to give an effective estimate of $N_t(\mathcal{P}(\mathcal{T}))$.

Denote by $\mathcal{P}_0$ the Apollonian circle packing generated by the circles $\{x=\pm 1\}$, $\{\lvert z\rvert=1\}$ and $\{|z-2i|=1\}$. It is clear that $\mathcal{P}(\mathcal{T})$ is a part of $\mathcal{P}_0$ (see Figure 3). Let $C_1$, $C_2$, $C_3$ and $C_4$ be the circles  $\{|z-(1+i)|=1\}$, $\{|z-(-1+i)|=1\}$, $\{y=0\}$ and  $\{y=2\}$ respectively. Set $S_i$ to be the reflection with respect to $C_i$. Denote $\operatorname{PSL}_2(\mathbb{C})\cap \langle S_1, S_2, S_3, S_4\rangle$ by $\mathcal{A}$. We fix $\Gamma$ a torsion-free finite index subgroup in $\mathcal{A}$. The limit set $\Lambda(\Gamma)$ of $\Gamma$ is exactly the closure of $\mathcal{P}_0$ (Proposition 2.9 in \cite{KO}). As a result, the critical exponent of $\Gamma$ equals the Hausdorff  dimension of the residual set of $\mathcal{P}_0$, which is denoted by $\alpha$. 

\begin{figure}[h]
\includegraphics[scale=0.3]{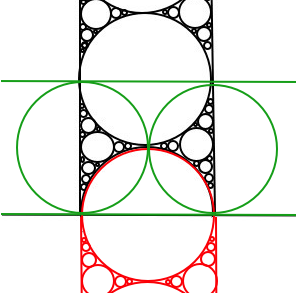}
\caption{Apollonian circle packing $\mathcal{P}$ and generators of $\mathcal{A}$}
\end{figure}

Throughout this section, we let $\eta$ be a constant in $(0,\frac{1}{4})$ satisfying the constraints described in $(\ref{constraint 1})$, $(\ref{constraint 2})$ and $(\ref{constraint 3})$.  For all small $t>0$, set
\begin{equation}
\label{restricted triangle}
\mathcal{T}({\eta,t}):=\{x+iy \in \mathcal{T}: t^{\eta} \leq y \leq t^{-\eta}\}.
\end{equation}
In fact,  an elementary computation in hyperbolic geometry shows that
\begin{equation*}
N_t(\mathcal{P}(\mathcal{T}))=\#\{C\in \mathcal{P}(\mathcal{T}): C\cap \mathcal{T}(\frac{1}{4}, t)\neq \emptyset, \operatorname{Area}_{\operatorname{hyp}}(C)>t\}.
\end{equation*}
However, it will be clear from the proof that $\mathcal{T}(\frac{1}{4},t)$ is not the right region to consider in order to obtain an effective estimate of $N_t(\mathcal{P}(\mathcal{T}))$.

\begin{notn}
For any circle packing $\mathcal{P}$ in $\hat{\mathbb{C}}$ and subset $E\subset \mathbb{H}^2$, set
 \begin{equation*}
 \mathcal{P}\cap E:=\{C\in \mathcal{P}:C\cap E\neq \emptyset\}.
 \end{equation*}
\end{notn}

\subsection{Reformulation into orbit counting problem}


Let $C$ be a hyperbolic circle in $\mathbb{H}^2$. Note that $C$ is also an Euclidean circle. Denote the Euclidean center and the Euclidean radius of $C$ by $e_C$ and $r_C$ respectively. We have the following equivalent relation through a basic computation in hyperbolic geometry:
\begin{equation}
\label{equivalent area}
\operatorname{Area}_{\operatorname{hyp}}(C)>t\Longleftrightarrow r_{C}>\operatorname{Im}(e_C)\cdot \beta(t),
\end{equation}
where $\beta(t)=\frac{\sqrt{t(4\pi+t)}}{(2\pi+t)}$.

Fix a circle packing $\mathcal{P}$ contained in $\mathcal{T}$ in the following. The counting function $N_t(\mathcal{P})$ can be reformulated as follows using (\ref{equivalent area}):
\begin{equation}
\label{volume counting formula}
N_t(\mathcal{P})=\#\{C\in \mathcal{P}:\,r_C >\operatorname{Im}(e_C)\cdot\beta(t)\}.
\end{equation}

We introduce the following functions on $\mathbb{H}^2$: for every small $\epsilon>0$,
\begin{align}
\label{height function}
h^{+}_{t}(z) &=-\log(\beta(t)\operatorname{Im}(z))\\
h^{-}_{t}(z) &=-\log((1+t)\beta(t)\operatorname{Im}(z)).\nonumber
\end{align}
For every subset $E\subset\mathcal{T}$, define
\begin{align}
\label{sets in NA}
P_{+}(E)&:=\{a_sn_{-z}:z\in E\,\,\text{and}\,\,0\leq s\leq h^{+}_{t}(z) \},\\
P_{-}(E) &:=\{a_sn_{-z}:z\in E,\,\,\text{and}\,\,0\leq s\leq h^{-}_{t}(z)\},\nonumber\\
B_{+}(E)&=\left( P_{+}(E)\right)^{-1}\,\,\text{and}\,\,B_{-}(E)=\left(P_{-}(E)\right)^{-1}.\nonumber
\end{align}

\begin{lem}
\label{upper bound for counting}
For any small $t>0$, the following holds
\begin{equation*}
N_t(\mathcal{P}\cap \mathcal{T}(\eta,t)) \leq  \# \left\{C\in \mathcal{P}: \hat{C} \cap B_{+}(\mathcal{T}(\eta,t))j \neq \emptyset\right\}+n(\mathcal{P},t),
\end{equation*}
where $n(\mathcal{P},t):=\#\{C\in \mathcal{P} :\operatorname{Area}_{\operatorname{hyp}}(C)>t,\,e_C \notin \mathcal{T}(\eta,t)
\}$.
\end{lem}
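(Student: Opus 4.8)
The plan is to split the circles contributing to $N_t(\mathcal{P}\cap\mathcal{T}(\eta,t))$ according to the location of their Euclidean centers, and to match the ``center-inside'' family against the orbit-counting set through the apex of the associated hemisphere.

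First I would record an explicit description of the target set. Using $B_{+}(E)=(P_{+}(E))^{-1}$, a direct computation in the upper half-space coordinates gives $n_z a_{-s}\cdot j = z + j e^{-s}$, so that as $s$ ranges over $[0,h^{+}_{t}(z)]$ the height $e^{-s}$ ranges over $[\beta(t)\operatorname{Im}(z),1]$. Hence
\begin{equation*}
B_{+}(\mathcal{T}(\eta,t))j=\{z+jl: z\in\mathcal{T}(\eta,t),\ \beta(t)\operatorname{Im}(z)\le l\le 1\}.
\end{equation*}
Next I would partition the circles counted by $N_t(\mathcal{P}\cap\mathcal{T}(\eta,t))$ — those $C\in\mathcal{P}$ with $C\cap\mathcal{T}(\eta,t)\ne\emptyset$ and $r_C>\operatorname{Im}(e_C)\beta(t)$, using the reformulation (\ref{volume counting formula}) — into the subfamily with $e_C\in\mathcal{T}(\eta,t)$ and its complement. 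The complementary subfamily is contained in $\{C\in\mathcal{P}:\operatorname{Area}_{\operatorname{hyp}}(C)>t,\ e_C\notin\mathcal{T}(\eta,t)\}$, so its cardinality is at most $n(\mathcal{P},t)$; this is exactly where the additive term comes from.

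The heart of the argument is the center-inside subfamily. For such a $C$ I would simply test the apex $e_C+j r_C$ of the hemisphere $\hat{C}$ against the set above. The base point $z=e_C$ lies in $\mathcal{T}(\eta,t)$ by hypothesis; the area condition $r_C>\operatorname{Im}(e_C)\beta(t)$ furnishes the lower bound $l=r_C\ge\beta(t)\operatorname{Im}(e_C)$; and since $\mathcal{P}\subset\mathcal{T}$ and $\mathcal{T}$ lies in the vertical strip $-1<x<1$, every disk in $\mathcal{P}$ has Euclidean diameter at most $2$, forcing $r_C\le 1$ and hence the upper bound $l\le 1$. Thus the apex lies in $B_{+}(\mathcal{T}(\eta,t))j$, so $\hat{C}\cap B_{+}(\mathcal{T}(\eta,t))j\ne\emptyset$, and this subfamily injects into $\{C\in\mathcal{P}:\hat{C}\cap B_{+}(\mathcal{T}(\eta,t))j\ne\emptyset\}$. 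Adding the two estimates yields the claim.

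I do not expect a serious obstacle here, as the statement is essentially bookkeeping in the spirit of Lemma \ref{reformulation 1}: the only points requiring care are the explicit evaluation of $B_{+}(\mathcal{T}(\eta,t))j$ and the observation that confinement to a strip of width $2$ gives the uniform bound $r_C\le 1$ needed to place the apex below height $1$. The one genuinely delicate modeling choice — permitting circles that meet $\mathcal{T}(\eta,t)$ but are centered outside it — is precisely what the auxiliary count $n(\mathcal{P},t)$ is designed to absorb.
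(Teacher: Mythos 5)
Your proof is correct and is essentially the argument the paper intends: the paper's one-line proof records the same explicit description of $B_{+}(\mathcal{T}(\eta,t))j=\{z+lj: z\in\mathcal{T}(\eta,t),\ \beta(t)\operatorname{Im}(z)\le l\le 1\}$ and appeals to (\ref{equivalent area}), with $n(\mathcal{P},t)$ absorbing exactly the circles whose centers fall outside $\mathcal{T}(\eta,t)$. Your write-up additionally spells out the one detail the paper glosses over, namely that the standing assumption $\mathcal{P}\subset\mathcal{T}$ forces $r_C\le 1$, so the apex $e_C+jr_C$ of $\hat{C}$ indeed lies at height at most $1$.
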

As $B_{+}(\mathcal{T}(\eta,t))j=\{z+rj\in \mathbb{H}^3: z\in \mathcal{T}(\eta,t), \,\beta(t)\operatorname{Im}(z)< r\leq 1\}$, this lemma can be easily verified using  (\ref{equivalent area}).

\begin{lem}
\label{lower bound for counting} 
For any small $t>0$, we have
\begin{equation*}
N_t(\mathcal{P}\cap \mathcal{T}(\eta,t)) \geq  \# \left\{C\in \mathcal{P}: \hat{C} \cap B_{-}(\mathcal{T}(\eta,t))j \neq \emptyset\right\}.
\end{equation*}
\end{lem}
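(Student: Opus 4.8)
The plan is to establish the reverse inequality to Lemma \ref{upper bound for counting}, showing that every circle whose geodesic plane $\hat C$ meets the set $B_{-}(\mathcal{T}(\eta,t))j$ genuinely contributes to $N_t(\mathcal{P}\cap\mathcal{T}(\eta,t))$. First I would record the explicit description of the target set, parallel to the one given after Lemma \ref{upper bound for counting}: since $B_{-}(E)=(P_{-}(E))^{-1}$ with $P_{-}(E)$ built from the exponent bound $0\le s\le h^{-}_t(z)$, and $h^{-}_t(z)=-\log((1+t)\beta(t)\operatorname{Im}(z))$, one computes
\begin{equation*}
B_{-}(\mathcal{T}(\eta,t))\,j=\{z+rj\in\mathbb{H}^3: z\in\mathcal{T}(\eta,t),\ (1+t)\beta(t)\operatorname{Im}(z)< r\le 1\}.
\end{equation*}
The slightly larger lower threshold $(1+t)\beta(t)\operatorname{Im}(z)$ here, compared with $\beta(t)\operatorname{Im}(z)$ in the upper bound, is exactly the buffer that forces the desired circles to be counted without any error term.

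Next I would show that if $C\in\mathcal{P}$ is a circle with $\hat C\cap B_{-}(\mathcal{T}(\eta,t))j\ne\emptyset$, then $C$ itself satisfies both $\operatorname{Area}_{\operatorname{hyp}}(C)>t$ and $e_C\in\mathcal{T}(\eta,t)$, so that it is counted in $N_t(\mathcal{P}\cap\mathcal{T}(\eta,t))$. Pick a point $z+rj\in\hat C\cap B_{-}(\mathcal{T}(\eta,t))j$. Since $\hat C$ is the hemisphere over $C$, the point $z+rj$ lying on it gives $r\le r_C$, and combined with $r>(1+t)\beta(t)\operatorname{Im}(z)$ one gets a lower bound on $r_C$ relative to $\operatorname{Im}(z)$. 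The work is then to pass from control at the sample point $z$ to control at the Euclidean center $e_C$: because $z$ lies within the Euclidean disk bounded by $C$, the imaginary parts $\operatorname{Im}(z)$ and $\operatorname{Im}(e_C)$ differ by at most $r_C$, and for the small circles in question this discrepancy is absorbed by the factor $(1+t)$. Feeding the resulting inequality $r_C>\operatorname{Im}(e_C)\cdot\beta(t)$ into the equivalence \eqref{equivalent area} yields $\operatorname{Area}_{\operatorname{hyp}}(C)>t$. That $e_C\in\mathcal{T}(\eta,t)$ should follow because $\mathcal{P}$ is contained in $\mathcal{T}$ and the sample point already lies in the restricted region $\mathcal{T}(\eta,t)$, so the center cannot escape the strip $t^{\eta}\le y\le t^{-\eta}$ for small $t$.

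Since distinct circles $C$ give rise to distinct hemispheres $\hat C$, this assignment is injective, so the number of circles $C\in\mathcal{P}$ with $\hat C\cap B_{-}(\mathcal{T}(\eta,t))j\ne\emptyset$ is at most the number counted by $N_t(\mathcal{P}\cap\mathcal{T}(\eta,t))$, which is the claimed inequality. The main obstacle I anticipate is the geometric bookkeeping in the center-versus-sample-point comparison: one must verify carefully that the gap between $\beta(t)\operatorname{Im}(z)$ and $(1+t)\beta(t)\operatorname{Im}(z)$ dominates the error $\operatorname{Im}(e_C)-\operatorname{Im}(z)$ introduced by shifting from $z$ to $e_C$, and that this holds uniformly for all small $t$. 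This is precisely the reason the lower threshold in $h^{-}_t$ carries the extra $(1+t)$ factor rather than matching $h^{+}_t$, and checking the estimate is the crux, though it is ultimately an elementary computation in the geometry of hemispheres over Euclidean circles.
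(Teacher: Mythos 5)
Your setup matches the paper's: the explicit description
$B_{-}(\mathcal{T}(\eta,t))j=\{z+rj: z\in\mathcal{T}(\eta,t),\ (1+t)\beta(t)\operatorname{Im}(z)<r\leq 1\}$, and the reduction to showing that every $C$ with $\hat C\cap B_{-}(\mathcal{T}(\eta,t))j\neq\emptyset$ satisfies $\operatorname{Area}_{\operatorname{hyp}}(C)>t$ (equivalently, by \eqref{equivalent area}, $r_C>\beta(t)\operatorname{Im}(e_C)$) and meets $\mathcal{T}(\eta,t)$. But the step you postpone as ``ultimately an elementary computation'' is the entire content of the lemma, and the mechanism you sketch for it is quantitatively false. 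You propose to combine $r_C\geq r>(1+t)\beta(t)\operatorname{Im}(z)$ with $|\operatorname{Im}(e_C)-\operatorname{Im}(z)|\leq r_C$ and let the factor $(1+t)$ absorb the shift from $z$ to $e_C$. For $(1+t)$ to absorb a vertical discrepancy $\Delta$ one needs $\Delta\leq\frac{t}{1+t}\operatorname{Im}(e_C)$, whereas the bound you have on the discrepancy is $r_C$; for precisely the borderline circles that dominate the count, $r_C\approx\beta(t)\operatorname{Im}(e_C)$ with $\beta(t)=\frac{\sqrt{t(4\pi+t)}}{2\pi+t}\sim\sqrt{t/\pi}$, which is far larger than $\frac{t}{1+t}\operatorname{Im}(e_C)$ as $t\to0$. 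The buffer is of order $t$ while the error you are asking it to absorb is of order $\sqrt{t}$, so the inequality chain cannot close.

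What saves the statement, and what the paper's proof actually uses, is the Pythagorean coupling on the hemisphere: a point of $\hat C$ cannot simultaneously have large height and large horizontal displacement from the center. Writing the sample point as $x_0+i(y_0+r_C\cos\theta_0)+jr_C\sin\theta_0$, the hypothesis becomes $f(\theta_0)>0$ for $f(\theta)=r_C\sin\theta-(1+t)\beta(t)(y_0+r_C\cos\theta)$. For $\theta_0\in(0,\frac{\pi}{2})$, monotonicity of $f$ on $(0,\frac{\pi}{2}]$ gives $f(\frac{\pi}{2})>0$, i.e.\ $r_C>(1+t)\beta(t)y_0$; for $\theta_0\in[\frac{\pi}{2},\pi)$ one uses $\max f=r_C\left(1+(1+t)^2\beta(t)^2\right)^{1/2}-(1+t)\beta(t)y_0>0$ together with the inequality $\left(1+(1+t)^2\beta(t)^2\right)^{1/2}<1+t$, valid for small $t$ precisely because $\beta(t)^2=t/\pi+O(t^2)$ and $1/\pi<2$. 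This last inequality is a specific quantitative property of the hyperbolic-area function $\beta$, not a generic ``small circles'' absorption, and it is exactly the ingredient your sketch lacks; without it the extra $(1+t)$ in $h^{-}_t$ buys nothing. A secondary slip: what must be verified is $C\cap\mathcal{T}(\eta,t)\neq\emptyset$, since that is what $N_t(\mathcal{P}\cap\mathcal{T}(\eta,t))$ counts, not $e_C\in\mathcal{T}(\eta,t)$; the latter need not hold for counted circles, and center conditions enter only in the separate error term $n(\mathcal{P},t)$ of Lemma \ref{upper bound for counting}.
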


\begin{proof}
Let $C$ be any circle in $\mathcal{P}$ such that $\hat{C} \cap B_{-}(\mathcal{T}(\eta,t))j \neq \emptyset$. Suppose $e_C=x_0+iy_0$. We may assume that  $x_0+i(y_0+r_C \cos \theta_0)+j r_C\sin \theta_0$ is the point at which $\hat{C}$ and $B_{-}(\mathcal{T}(\eta,t))j$ intersect. We claim that $C\cap \mathcal{T}(\eta,t)\neq \emptyset$ and $\operatorname{Area}_{\operatorname{hyp}}(C)>t$.  The claim $C\cap \mathcal{T}(\eta,t)\neq \emptyset$ directly follows from the observation that $B_{-}(\mathcal{T}(\eta,t))j=\{z+rj\in\mathbb{H}^3: z\in \mathcal{T}(\eta,t), (1+t)\beta(t)\operatorname{Im}(z)<r\leq 1\}$. Now we show the second claim. In view of  (\ref{equivalent area}), it suffices to show that $r_C>y_0\beta(t)$.

Consider the function $f$  given by $f(\theta)=r_C\sin \theta-(1+t)\beta(t)(y_0+r_C\cos \theta)$ for $\theta\in (0,\pi)$. Since $x_0+i(y_0+r_C\cos \theta_0)+jr_C\sin \theta_0$ is in $B_{-}(\mathcal{T}(\eta,t))j$, $f(\theta_0)>0$. There are two possibilities to discuss.

First suppose $\theta_0 \in [\frac{\pi}{2},\pi)$.  Then $\max f|_{[\frac{\pi}{2},\pi)}=r_C(1+(1+t)^2\beta(t)^2)^{\frac{1}{2}}-y_0(1+t)\beta(t)>0$. When $t$ is small enough, we have 
\begin{equation*}
({1+(1+t)^2\beta(t)^2})^{\frac{1}{2}} < 1+t.
\end{equation*}
Consequently,
\begin{equation*}
r_C > \frac{y_0(1+t)\beta(t)}{(1+(1+t)^2\beta(t)^2)^{\frac{1}{2}}}> y_0\beta(t).
\end{equation*}

Next we suppose that $\theta_0 \in (0,\frac{\pi}{2})$. Since the derivative of $f$ is always positive on $(0,\frac{\pi}{2}]$ and $f(\theta_0)>0$, we have $f(\frac{\pi}{2})>0$, i.e. $r_C>(1+t)\beta(t)y_0>\beta(t)y_0$, completing the proof of the second claim.
\end{proof}

Applying Lemmas \ref{upper bound for counting} and \ref{lower bound for counting} to  $\Gamma (C_0)\cap \mathcal{T}(\eta, t)$, we relate the circle counting function to the following orbit counting functions:
\begin{prop}
\label{reformulation inequality}
For small $t>0$, the following inequalities hold:
\begin{align*}
&\# \left([e]\Gamma \cap H\backslash HKP_{-}(\mathcal{T}(\eta,t))\right)\leq N_{t}\left(\Gamma(C_0)\cap \mathcal{T}(\eta,t)\right), \\
&N_{t}\left(\Gamma(C_0)\cap \mathcal{T}(\eta,t)\right)\leq \# \left([e]\Gamma \cap H\backslash HKP_{+}(\mathcal{T}(\eta,t))\right)+n(\Gamma(C_0),t),
\end{align*}
where $n(\Gamma(C_0),t)=\#\{C\in \Gamma(C_0):\operatorname{Area}_{\operatorname{hyp}}(C)>t, e_C\notin \mathcal{T}(\eta,t)\}$.
\end{prop}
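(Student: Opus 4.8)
The plan is to specialize Lemmas~\ref{upper bound for counting} and~\ref{lower bound for counting} to the packing $\mathcal{P}=\Gamma(C_0)$ and then convert the two resulting geometric quantities
\begin{equation*}
\#\{C\in\Gamma(C_0):\hat{C}\cap B_{\pm}(\mathcal{T}(\eta,t))j\neq\emptyset\}
\end{equation*}
into the orbit counts $\#([e]\Gamma\cap H\backslash HKP_{\pm}(\mathcal{T}(\eta,t)))$. Taking $\mathcal{P}=\Gamma(C_0)$ in Lemma~\ref{lower bound for counting} produces exactly the lower bound for $N_t(\Gamma(C_0)\cap\mathcal{T}(\eta,t))$ (with the minus sign), and taking $\mathcal{P}=\Gamma(C_0)$ in Lemma~\ref{upper bound for counting} produces the upper bound together with the error term $n(\Gamma(C_0),t)$. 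Thus everything reduces to the bookkeeping identity translating each circle count into the corresponding orbit count, in complete analogy with how Proposition~\ref{reformulation 3} was obtained from Lemmas~\ref{reformulation 1} and~\ref{reformulation 2}.

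The heart of this translation is the parametrization of circles by $H\backslash G$. Since $H=\operatorname{Stab}_G(\hat{C_0})$, $j\in\hat{C_0}$ and $\hat{C_0}\cong H/(H\cap K)$, we have $\hat{C_0}=Hj$, and the coset $H\gamma\in H\backslash G$ corresponds to the circle whose geodesic plane is $\gamma^{-1}\hat{C_0}=\gamma^{-1}Hj$; as $\Gamma$ is a group this parametrizes all of $\Gamma(C_0)$, and $H\gamma=H\gamma'$ inside $\Gamma$ precisely when $\gamma,\gamma'$ lie in one $\Gamma_H$-coset, so circles in $\Gamma(C_0)$ biject with $\Gamma_H\backslash\Gamma$. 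First I would record the set-theoretic equivalence
\begin{equation*}
\gamma^{-1}\hat{C_0}\cap B_{\pm}(\mathcal{T}(\eta,t))j\neq\emptyset
\iff
\gamma\in HKP_{\pm}(\mathcal{T}(\eta,t)).
\end{equation*}
This follows by writing a common point as $\gamma^{-1}hj=bj$ with $h\in H$ and $b\in B_{\pm}(\mathcal{T}(\eta,t))$, observing that this is equivalent to $b^{-1}\gamma^{-1}h\in K=\operatorname{Stab}_G(j)$, hence $\gamma^{-1}\in B_{\pm}(\mathcal{T}(\eta,t))KH$, and then using $B_{\pm}(\mathcal{T}(\eta,t))=P_{\pm}(\mathcal{T}(\eta,t))^{-1}$ together with $H^{-1}=H$, $K^{-1}=K$ to rearrange this into $\gamma\in HKP_{\pm}(\mathcal{T}(\eta,t))$.

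Combining the displayed equivalence with the bijection between $\Gamma(C_0)$ and $\Gamma_H\backslash\Gamma$ gives, for each sign,
\begin{equation*}
\#\{C\in\Gamma(C_0):\hat{C}\cap B_{\pm}(\mathcal{T}(\eta,t))j\neq\emptyset\}
=\#([e]\Gamma\cap H\backslash HKP_{\pm}(\mathcal{T}(\eta,t))),
\end{equation*}
since $[e]\gamma\in H\backslash HKP_{\pm}(\mathcal{T}(\eta,t))$ means exactly $\gamma\in HKP_{\pm}(\mathcal{T}(\eta,t))$. Substituting these two identities into the bounds supplied by Lemmas~\ref{upper bound for counting} and~\ref{lower bound for counting} yields the two claimed inequalities. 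I do not expect a serious obstacle: all the analytic content lives in the two preceding lemmas, and what remains is the routine translation between the $\Gamma$-orbit of $C_0$ and cosets in $H\backslash G$. The only point that demands care is keeping the inverse straight in the identification $H\gamma\leftrightarrow\gamma^{-1}\hat{C_0}$, which is precisely what makes the membership condition emerge as $\gamma\in HKP_{\pm}(\mathcal{T}(\eta,t))$ rather than $\gamma^{-1}\in HKP_{\pm}(\mathcal{T}(\eta,t))$.
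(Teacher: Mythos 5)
Your proposal is correct and follows exactly the route the paper takes: the paper's own proof consists of the single observation that Lemmas \ref{upper bound for counting} and \ref{lower bound for counting}, applied to $\Gamma(C_0)$, reduce the statement to the translation between circles and cosets, which it leaves implicit. Your explicit bookkeeping --- the bijection $\Gamma_H\gamma \leftrightarrow \gamma^{-1}(C_0)$, the equivalence $\gamma^{-1}\hat{C_0}\cap B_{\pm}(\mathcal{T}(\eta,t))j\neq\emptyset \iff \gamma\in HKP_{\pm}(\mathcal{T}(\eta,t))$ via $\operatorname{Stab}_G(j)=K$, and the inversion $B_{\pm}=P_{\pm}^{-1}$ --- is precisely the suppressed content and is carried out correctly.
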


\subsection{Number of disks in the cuspidal neighborhoods}
We estimate the counting function $n(\mathcal{P}_0,t)$ defined in Lemma \ref{upper bound for counting}:
\begin{prop}
\label{circles in the cusp 2}
For all sufficiently small $t>0$, we have
\begin{equation*}
n(\mathcal{P}_0,t)=O(t^{-\frac{\alpha}{2}+\eta(\alpha-1)}).
\end{equation*}
\end{prop}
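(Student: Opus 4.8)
The plan is to decompose $\{e_C\notin\mathcal T(\eta,t)\}$ into neighborhoods of the three ideal vertices $\infty,1,-1$ of $\mathcal T$ and, in each, to count the circles of large hyperbolic area by exploiting the parabolic self-similarity of $\mathcal P_0$ at the corresponding cusp. First I would record the reduction: every circle $C\in\mathcal P_0$ whose enclosed disk lies in $\mathbb H^2$ has center in $\mathcal T$ (the disks $\{|z|<1\}$ and $\{|z-2ki|<1\}$ are not subdivided by the packing, and $\mathcal P_0$ meets $\mathbb H^2$ only inside $\mathcal T$). Hence $e_C\notin\mathcal T(\eta,t)$ forces $e_C$ into one of the cuspidal neighborhoods $\{\operatorname{Im}(e_C)>t^{-\eta}\}$ (the cusp at $\infty$) or $\{\operatorname{Im}(e_C)<t^{\eta}\}$ near $\pm1$. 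Throughout I use $(\ref{equivalent area})$, that $\operatorname{Area}_{\operatorname{hyp}}(C)>t\iff r_C>\operatorname{Im}(e_C)\,\beta(t)$, together with $\beta(t)\asymp t^{1/2}$.

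For the cusp at $\infty$ I would use the vertical parabolic $z\mapsto z+4i$ (a power of which lies in $\Gamma$), whose fundamental domains are the period rectangles $\mathcal F_n=\{-1<x<1,\ 4n\le y<4n+4\}$. Since this translation preserves Euclidean radii, the number of circles with center in $\mathcal F_n$ and radius exceeding $\rho$ is bounded, uniformly in $n$, by a single period count $M(\rho)$; comparing with circles meeting a fixed bounded set of curvature $<1/\rho$ and invoking the upper bound of Theorem \ref{effective circle counting thm} on a bounded regular rectangle gives $M(\rho)\ll\rho^{-\alpha}$. A circle with $\operatorname{Area}_{\operatorname{hyp}}(C)>t$ and center in $\mathcal F_n$ satisfies $r_C>\operatorname{Im}(e_C)\beta(t)\ge 4n\,\beta(t)$, so such circles number $\ll(4n\,\beta(t))^{-\alpha}$. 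Summing over $4n>t^{-\eta}$ and using $\alpha>1$,
\begin{equation*}
\sum_{4n>t^{-\eta}}(4n\,\beta(t))^{-\alpha}\ll\beta(t)^{-\alpha}\,(t^{-\eta})^{1-\alpha}\asymp t^{-\frac{\alpha}{2}+\eta(\alpha-1)},
\end{equation*}
which is the desired bound for the $\infty$-cusp.

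For the cusps at $\pm1$ I would reduce to the previous case by conjugation. Take $g(z)=\frac{-1}{z-1}\in\operatorname{PSL}_2(\mathbb R)$, an isometry of $\mathbb H^2$ (hence area-preserving) with $g(1)=\infty$. Then $g\mathcal P_0$ is a circle packing with a cusp at $\infty$ whose stabilizing parabolic is $g(S_1S_3)g^{-1}$; since $S_1S_3\notin\operatorname{PSL}_2(\mathbb R)$ its conjugate is a translation $z\mapsto z+c'$ with $\operatorname{Im}(c')\ne0$, so the height/period decomposition of the previous paragraph applies verbatim to $g\mathcal P_0$, and $M(\rho)\ll\rho^{-\alpha}$ holds for $g\mathcal P_0$ as well (same Hausdorff dimension $\alpha$). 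A short computation shows that $g$ sends $\{\operatorname{Im}(e_C)<t^{\eta}\}$ near $1$ to the high region $\{\operatorname{Im}\gtrsim t^{-\eta}\}$ near the image cusp; applying the $\infty$-cusp estimate to $g\mathcal P_0$ gives $O(t^{-\alpha/2+\eta(\alpha-1)})$ for the $1$-cusp, and by the symmetry $x\mapsto-x$ the same for $-1$. Adding the three contributions yields the proposition.

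The main obstacle is the uniform per-period estimate $M(\rho)\ll\rho^{-\alpha}$ together with its summation: one must ensure the count in a period is genuinely the Apollonian count at scale $\rho$ (independent of the period, via parabolic invariance) and that the series converges precisely because $\alpha>1$, which is exactly what produces the gain $t^{\eta(\alpha-1)}$. A secondary difficulty is the bookkeeping at the finite cusps $\pm1$: verifying that the conjugated parabolic translates off the real direction (so that heights, not mere scales, organize the circles) and that the cutoff $\{y=t^{\eta}\}$ corresponds, up to bounded constants, to the high-cusp cutoff $\{y'\asymp t^{-\eta}\}$ after applying $g$.
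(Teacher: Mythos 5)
Your proof is correct, and its skeleton matches the paper's: the same decomposition into the three cuspidal neighborhoods, the same periodicity-plus-effective-counting argument at the cusp at $\infty$ (the paper's Lemma \ref{circles in the cusp 1} translates each circle back into a fixed period strip, bounds the per-period count by Theorem \ref{effective circle counting thm}, and sums $\sum_k k^{-\alpha}$ using $\alpha>1$, exactly as you do), and the same principle of moving the finite cusps to $\infty$ by a real M\"obius map. The one genuine difference is the treatment of the cusps at $\pm1$. The paper (Lemma \ref{circles in the cusp 3}) uses $g_0=\begin{pmatrix}1/2 & -3/2\\ 1/2 & 1/2\end{pmatrix}\in\operatorname{PSL}_2(\mathbb{R})$, which \emph{preserves} $\mathcal{P}_0$ and permutes its three cusps, so $N_t(\mathcal{P}_0\cap\{|z\mp1|\le t^{\eta}\})$ literally equals a count of the form already handled in Lemma \ref{circles in the cusp 1} for the \emph{same} packing, with no further work. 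Your $g(z)=-1/(z-1)$ is not a symmetry of $\mathcal{P}_0$, so you must rerun the period argument on the conjugated packing $g\mathcal{P}_0$; this obliges you to verify (i) that $g(S_1S_3)g^{-1}$ is a translation by a non-real number (true — one computes $g(S_1S_3)g^{-1}:z\mapsto z+i$, and indeed $g\mathcal{P}_0$ is a quarter-scale copy of $\mathcal{P}_0$), and (ii) that Theorem \ref{effective circle counting thm} applies to $g\mathcal{P}_0$ with the group $g\Gamma g^{-1}$, which holds since conjugation preserves geometric finiteness and the critical exponent, and rectangles remain regular because the cusps have rank $1$ and $\alpha>1$. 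The paper's choice buys brevity by exploiting the accidental order-three symmetry of this particular packing; your choice costs these extra verifications but is more robust, in that it would apply verbatim to a triangle packing lacking that symmetry.
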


\begin{lem}
\label{circles in the cusp 1}
For all sufficiently small $t>0$, we have
\begin{equation*}
N_t\left(\mathcal{P}_0\cap\{|z|\geq t^{-\eta}\}\right)=O({t^{-\frac{\alpha}{2}+\eta (\alpha-1)}}).
\end{equation*}
\end{lem}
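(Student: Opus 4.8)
The plan is to exploit the translational self-similarity of $\mathcal{P}_0$ near the cusp at $\infty$ and reduce the count to a \emph{weighted curvature count} over a bounded fundamental region. First I would record the relevant structure of $\mathcal{P}_0\cap\mathbb{H}^2$. Two of the four mutually tangent generators are the vertical lines $\{x=\pm1\}$, so every circle of $\mathcal{P}_0$ lying in $\mathbb{H}^2$ is contained in the strip $\{-1\le x\le 1\}$ and hence has Euclidean radius $r_C\le 1$; moreover $S_4S_3$ is the translation $z\mapsto z+4i$, which preserves $\mathcal{P}_0$. Thus $\mathcal{P}_0\cap\mathbb{H}^2$ splits into vertical \emph{columns}: each such circle is uniquely $C+4ni$ with $n\ge 0$ and $C$ a \emph{base} circle whose centre lies in the bounded box $Q:=[-1,1]\times(0,4]$.

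Next, for a base circle $C$ with centre $x_0+iy_0$ and radius $r_C$, I count the translates $C_n:=C+4ni$ meeting the two defining requirements. By \eqref{equivalent area}, $C_n$ has $\operatorname{Area}_{\operatorname{hyp}}(C_n)>t$ exactly when $r_C>(y_0+4n)\beta(t)$; and since $|x_0|\le 1$ and $r_C\le 1$, the condition $C_n\cap\{|z|\ge t^{-\eta}\}\neq\emptyset$ forces $y_0+4n\ge t^{-\eta}-2$. Writing $\beta=\beta(t)$ and using $\beta(t)\asymp t^{1/2}$ as $t\to 0$, the number of admissible $n$ is at most $\tfrac14\bigl(r_C/\beta-(t^{-\eta}-2-y_0)\bigr)_+$, which vanishes unless $r_C\gg t^{1/2-\eta}$ and is otherwise $\le r_C/(4\beta)$.

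Summing over base circles then yields
\[
N_t\bigl(\mathcal{P}_0\cap\{|z|\ge t^{-\eta}\}\bigr)\ \le\ \frac{1}{4\beta(t)}\sum_{C:\ r_C\ge \rho} r_C,
\]
the sum running over base circles in $Q$, with $\rho$ of order $t^{1/2-\eta}$. To evaluate it I would note that every base circle with $r_C\ge s$ meets the bounded set $Q$, so $\#\{C\ \text{base}:\ r_C\ge s\}\le N_{1/s}(\mathcal{P}_0,Q)=O(s^{-\alpha})$ by Theorem \ref{main thm}: here $\Gamma$ is geometrically finite with $\delta=\alpha>1$, and $Q$ is bounded with rectifiable boundary, hence regular since the parabolic points of $\Gamma$ have rank $1$ and $\alpha>\max\{1,1\}$. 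As $\alpha>1$, Abel summation (layer-cake) gives $\sum_{C:\ r_C\ge\rho} r_C=O(\rho^{1-\alpha})$. Substituting $\rho\asymp t^{1/2-\eta}$ and $\beta\asymp t^{1/2}$ produces the exponent $-\tfrac12+(\tfrac12-\eta)(1-\alpha)=-\tfrac{\alpha}{2}+\eta(\alpha-1)$, which is exactly the claimed bound.

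The main obstacle is the convergence of the radius-sum: this is precisely where $\alpha>1$ is essential, since for $\alpha\le 1$ the sum would be governed by the largest circles and the estimate would degrade. Beyond this the work is bookkeeping — verifying the strip confinement and the period-$4i$ invariance of $\mathcal{P}_0$, handling the finitely many circles not lying in $\mathbb{H}^2$ (e.g.\ $\{|z|=1\}$), and checking that $Q$ is a genuine bounded regular set so that the circle-counting theorem delivers the uniform bound $O(s^{-\alpha})$.
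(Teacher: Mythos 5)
Your proposal is correct and is essentially the paper's own argument: both proofs exploit the vertical periodicity of $\mathcal{P}_0$ to reduce the problem to circles in a bounded fundamental region, apply the effective counting theorem (Theorem \ref{effective circle counting thm}) there to bound the number of such circles of curvature less than $T$ by $O(T^{\alpha})$, and use $\alpha>1$ for convergence, which is exactly where the gain $\eta(\alpha-1)$ in the exponent comes from. The only real difference is the order in which the double count is evaluated: the paper sums over the translation level $k\geq t^{-\eta}/2$, bounding for each $k$ the number of base circles of curvature at most $\sqrt{\pi/(4k^2t)}$ and using $\sum_{k\geq t^{-\eta}/2}k^{-\alpha}\ll t^{\eta(\alpha-1)}$, whereas you sum over base circles, bounding for each one the number of admissible translates by $O(r_C/\beta(t))$ and finishing with Abel summation against the tail bound $O(s^{-\alpha})$ --- the same estimate, Fubini'd the other way.
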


\begin{proof}
Let $C$ be any circle in $\mathcal{P}_0$ such that $C$ intersects $\{| z|\geq t^{-\eta}\}$. Since $\mathcal{P}_0$ is periodic, we choose $C' \subset \mathcal{P}_0\cap\{0<\operatorname{Im}z\leq 2\}$ so that $C=C'+2k$ for some $k\in \mathbb{N}$. Denote the Euclidean radius of $C'$ by $r_0$. Then
\begin{equation*}
\operatorname{Area}_{\operatorname{hyp}}(C)= \frac{\pi r_0^2}{4k^2}\left(1+O(k^{-1})\right). \nonumber
\end{equation*}
It follows from Theorem \ref{effective circle counting thm} that
\begin{align*}
& N_{t}(\mathcal{P}_0\cap\{|z|\geq t^{-\eta}\})\\ 
\ll& \sum_{k \geq \frac{t^{-\eta}}{2}} \# \left\{C'\in \mathcal{P}_0\cap\{0<\operatorname{Im}z\leq 2\}: \operatorname{Curv}(C)<\sqrt{\frac{\pi}{4k^2 t}}\right\}\\ 
\ll& \sum_{k\geq t^{-\eta}/2} \left(\frac{\pi}{4k^2 t}\right)^{\frac{\alpha}{2}}\ll 
 t^{-\frac{\alpha}{2}+\eta (\alpha-1)}.
\end{align*}
\end{proof}

\begin{lem}
\label{circles in the cusp 3}
For all sufficiently small $t>0$, we have
\begin{align*}
N_t\left(\mathcal{P}_0\cap\{| z\pm 1|\leq t^{\eta}\}\right)&=O(t^{-\frac{\alpha}{2}+\eta(\alpha-1)}).
\end{align*}
\end{lem}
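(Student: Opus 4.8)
The plan is to transport the two finite cusps $\pm 1$ to the cusp at $\infty$, where the analogous estimate was already established in Lemma \ref{circles in the cusp 1}, by applying suitable isometries of $\mathbb{H}^2$. First I would use the reflection $\sigma(z)=-\bar z$ across the imaginary axis: it is an orientation-reversing isometry of $\mathbb{H}^2$, hence preserves hyperbolic areas, it preserves $\mathcal{P}_0$ (it fixes $\{|z|=1\}$ and $\{|z-2i|=1\}$ and interchanges $\{x=1\}$ with $\{x=-1\}$), and it interchanges the cusps $1$ and $-1$. Consequently $N_t(\mathcal{P}_0\cap\{|z+1|\le t^{\eta}\})=N_t(\mathcal{P}_0\cap\{|z-1|\le t^{\eta}\})$, so it suffices to bound the count near the cusp $1$.

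Next I would introduce $g(z)=\tfrac{1}{1-z}\in\operatorname{PSL}_2(\mathbb R)$, which preserves $\mathbb{H}^2$ (hence preserves hyperbolic area) and sends $1\mapsto\infty$. A direct computation shows that $g$ carries the two circles $\{x=1\}$ and $\{|z|=1\}$ tangent at $1$ onto the parallel vertical lines $\{\operatorname{Re}w=0\}$ and $\{\operatorname{Re}w=\tfrac12\}$, and that $|g(z)|=|z-1|^{-1}\ge t^{-\eta}$ on $\{|z-1|\le t^{\eta}\}$, whence $g(\{|z-1|\le t^{\eta}\})\subset\{|w|\ge t^{-\eta}\}$. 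Since $g$ is an area-preserving bijection of $\mathbb{H}^2$ carrying $\mathcal{P}_0$ to the packing $g\mathcal{P}_0$, I obtain
\begin{equation*}
N_t(\mathcal{P}_0\cap\{|z-1|\le t^{\eta}\})=N_t\big(g\mathcal{P}_0\cap g(\{|z-1|\le t^{\eta}\})\big)\le N_t\big(g\mathcal{P}_0\cap\{|w|\ge t^{-\eta}\}\big).
\end{equation*}

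It then remains to bound the last quantity by $O(t^{-\alpha/2+\eta(\alpha-1)})$, which I would do by running the proof of Lemma \ref{circles in the cusp 1} verbatim for $g\mathcal{P}_0$. The packing $g\mathcal{P}_0$ is locally finite, consists of finitely many orbits of the geometrically finite group $g\Gamma g^{-1}$, and the critical exponent of $g\Gamma g^{-1}$ is again $\alpha>1$, so Theorem \ref{effective circle counting thm} applies. The required structural input is periodicity near $\infty$: the cusp $1$ is a rank-one parabolic fixed point of $\Gamma$, generated up to finite index by the product $S_1 S_3$ of the reflections in the circles $C_1,C_3$, which are tangent at $1$; if $\gamma_1\in\Gamma$ is a parabolic generator of its stabiliser, then $g\gamma_1 g^{-1}$ fixes $\infty$ and preserves the pair of lines $\{\operatorname{Re}w=0\}$, $\{\operatorname{Re}w=\tfrac12\}$, which forces it to be a vertical translation $w\mapsto w+\tau i$ with $\tau>0$. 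Hence $g\mathcal{P}_0$ is confined, near $\infty$, to the vertical strip $\{0\le\operatorname{Re}w\le\tfrac12\}$ and is periodic under $w\mapsto w+\tau i$, exactly mirroring $\mathcal{P}_0$ at its cusp $\infty$. Writing each circle of $g\mathcal{P}_0$ meeting $\{|w|\ge t^{-\eta}\}$ as $D+\tau k i$ with $D$ in a bounded fundamental domain and $k\gg t^{-\eta}$, the area relation (\ref{equivalent area}) gives $\operatorname{Area}_{\operatorname{hyp}}(D+\tau ki)=\tfrac{\pi r_D^2}{(\tau k)^2}(1+O(k^{-1}))$, so the condition $\operatorname{Area}_{\operatorname{hyp}}>t$ becomes $\operatorname{Curv}(D)\ll (\tau k)^{-1}t^{-1/2}$; Theorem \ref{effective circle counting thm} bounds the number of such $D$ by $O(k^{-\alpha}t^{-\alpha/2})$, and summing over $k\gg t^{-\eta}$ (using $\alpha>1$) yields $O(t^{-\alpha/2+\eta(\alpha-1)})$.

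The main obstacle is precisely this periodicity claim: one must locate the parabolic generator of the cusp at $1$ in $\mathcal{A}$ and verify that its conjugate $g\gamma_1 g^{-1}$ is a genuine vertical translation preserving $g\mathcal{P}_0$, so that the fundamental-domain decomposition of Lemma \ref{circles in the cusp 1} becomes available. One should also check that, for $t$ small, every circle of $\mathcal{P}_0$ meeting $\{|z-1|\le t^{\eta}\}$ lies in the cusp lune bounded by $\{x=1\}$ and $\{|z|=1\}$, so that its $g$-image indeed sits high up in the vertical strip. Once these points are settled, the summation over $k$ is routine and reproduces the exponent found at the cusp $\infty$, the differing strip width and period affecting only the implied constant.
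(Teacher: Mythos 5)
Your proposal is correct, and its core idea is the same as the paper's: move the finite cusp to $\infty$ by an element of $\operatorname{PSL}_2(\mathbb{R})$ (so hyperbolic areas are preserved) and then run the periodic counting argument at infinity. The difference is in the choice of map, and it changes the amount of work substantially. The paper takes the single element $g_0(z)=\frac{z-3}{z+1}\in\operatorname{PSL}_2(\mathbb{R})$, which is itself a symmetry of $\mathcal{P}_0$: it fixes $\{|z-2i|=1\}$ and cyclically permutes $\{x=1\}$, $\{|z|=1\}$, $\{x=-1\}$, hence permutes the three cusps $1,-1,\infty$ while preserving the packing; therefore the count near $\pm1$ is a count near $\infty$ \emph{for the same packing} $\mathcal{P}_0$, and Lemma \ref{circles in the cusp 1} applies verbatim, making the proof three lines long. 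Your maps $\sigma(z)=-\bar z$ (which does preserve $\mathcal{P}_0$) and $g(z)=\frac{1}{1-z}$ (which does not) force you instead to re-prove Lemma \ref{circles in the cusp 1} for the transported packing $g\mathcal{P}_0$ and the conjugated group $g\Gamma g^{-1}$: you must verify local finiteness, confinement of the relevant circles to the strip $\{0\le\operatorname{Re}w\le\tfrac12\}$, and the vertical periodicity coming from the conjugated parabolic stabilizer of the cusp $1$. All of these steps are sound: your argument that $g\gamma_1 g^{-1}$ must be a translation by $\tau i$ is right (a parabolic fixing $\infty$ is a translation, and a translation cannot swap two parallel lines, so it preserves each, forcing the translation to be purely imaginary; that $\gamma_1$ permutes the two circles uses the fact that they are the only circles of $\mathcal{P}_0$ through the point $1$), geometric finiteness and the critical exponent $\alpha>1$ are conjugation invariant, so Theorem \ref{effective circle counting thm} applies to $g\mathcal{P}_0$, and the summation over $k\gg t^{-\eta}$ reproduces the exponent $-\tfrac{\alpha}{2}+\eta(\alpha-1)$ exactly as in Lemma \ref{circles in the cusp 1}. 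What your route costs is these extra verifications; what it buys is robustness, since it works at any rank-one cusp of such a packing without needing the accidental extra M\"obius symmetry of $\mathcal{P}_0$ that makes the paper's reduction instantaneous.
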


\begin{proof}
Let $g_0:=\begin{pmatrix} \frac{1}{2} & -\frac{3}{2} \\ \frac{1}{2} & \frac{1}{2}\end{pmatrix} \in \operatorname{PSL}_2(\mathbb{R})$. 
As $g_0$ preserves $\mathcal{P}_0$ as well as the hyperbolic metric, we have
\begin{equation*}
N_t(\mathcal{P}_0\cap\{| z-1|\leq t^{\eta}\})=N_t(\mathcal{P}_0\cap\{| z|\geq ct^{-\eta}\}),
\end{equation*}
where $c>0$ is some constant. As a result, the estimate of $N_t(\mathcal{P}_0\cap \{|z-1|\leq t^{\eta}\})$ easily follows from Lemma \ref{circles in the cusp 1}. The estimate of $N_t(\mathcal{P}_0\cap\{|z+1|\leq t^{\eta}\})$ can be verified similarly using $g_0^{-1}$.
\end{proof}

In view of the inclusion
\begin{equation*}
\mathcal{T}\backslash \mathcal{T}(\eta,t) \subset \{|z|\geq t^{-\eta}\} \cup \{| z-1|\leq 2t^{\eta}\}\cup \{| z+1|\leq 2t^{\eta}\},
\end{equation*}
Proposition \ref{circles in the cusp 2} directly follows from Lemmas \ref{circles in the cusp 1} and \ref{circles in the cusp 3}.

\subsection{On the measure $(\operatorname{Im}z)^{-\alpha}d\omega_{\Gamma}$} 
We utilize the measure $d\omega_{\Gamma}$ (Definition \ref{measure on the complex plane}) when we count the number of circles with respect to the Euclidean metric. As now we are counting the number of circles with respect to the hyperbolic metric,  we introduce a modification of $d\omega_{\Gamma}$,  $(\operatorname{Im}z)^{-\alpha}d\omega_{\Gamma}$ on $\mathbb{H}^2$.

\subsubsection{Estimate of the measure of cuspidal neighborhoods}

\begin{thm}[Theorem 9.3 in \cite{Harmonic analysis}]
We have
\begin{equation*}
\int_{z\in \mathcal{T}}(\operatorname{Im}z)^{-\alpha}d\omega_{\Gamma}(z)<\infty.
\end{equation*}
\end{thm}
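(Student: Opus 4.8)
The only places where the integrand $(\operatorname{Im}z)^{-\alpha}\,d\omega_\Gamma$ can fail to be integrable are the three ideal vertices $-1,1,\infty$ of $\mathcal T$, and the plan is to localize at each of them. Using $d\omega_\Gamma=(|z|^2+1)^\alpha\,d\nu_j$ (the formula following Lemma~\ref{Busemann function}) I would first write
\[
\int_{\mathcal T}(\operatorname{Im}z)^{-\alpha}\,d\omega_\Gamma(z)=\int_{\mathcal T}(\operatorname{Im}z)^{-\alpha}(|z|^2+1)^{\alpha}\,d\nu_j(z),
\]
and then fix a small $\rho_0>0$ and split $\mathcal T$ into the compact core $\mathcal T_0=\{z\in\mathcal T:\ |z-1|,\,|z+1|\ge\rho_0,\ \operatorname{Im}z\le\rho_0^{-1}\}$ and three cuspidal pieces $U_{1},U_{-1},U_{\infty}$ around the vertices. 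On $\mathcal T_0$ one has $\operatorname{Im}z\ge c>0$ and $|z|\le C$, so the integrand is bounded there; since $\nu_j$ is a finite measure, the core contributes a finite amount and it remains to bound the three cuspidal integrals.

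Each vertex $p\in\{1,-1,\infty\}$ is a tangency point of two bounding circles of $\mathcal P_0$ (with $\{x=1\},\{x=-1\}$ tangent at $\infty$), hence a bounded parabolic fixed point of $\Gamma$ of rank $1$; thus $k_0=1<\alpha$. The key input is the sharp behaviour of $\nu_j$ at such a cusp. Writing $\xi_t$ for the geodesic ray from $j$ towards $p$, one has $\xi_t\in\mathcal H_p$ and $d(\xi_t,\Gamma(j))=t+O(1)$ for large $t$, so feeding $k(\xi_t)=1$ and $d(\xi_t,\Gamma(j))\asymp t$ into the two–sided global measure formula of Stratmann–Sullivan (recalled in the proof of the regularity criterion, cf.\ \cite{Stratmann},\cite{Sullivan}) gives, for small $\epsilon>0$,
\[
\nu_j\big(B(p,\epsilon)\big)\asymp\epsilon^{\,2\alpha-1},
\]
where $B(\infty,\epsilon)$ denotes the spherical ball, i.e.\ $\nu_j(\{|z|\ge R\})\ll R^{-(2\alpha-1)}$.

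I would then estimate each cuspidal integral by a dyadic decomposition in the height $\operatorname{Im}z$. Consider $p=1$: the geodesics $\{x=1\}$ and $\{|z|=1\}$ are tangent at $1$, so every $z\in U_1$ with $\operatorname{Im}z\approx2^{-n}$ satisfies $|z-1|\lesssim2^{-n}$; hence the shell $L_n=\{z\in U_1:\ 2^{-n-1}\le\operatorname{Im}z<2^{-n}\}$ lies in $B(1,c\,2^{-n})$, on which $(\operatorname{Im}z)^{-\alpha}(|z|^2+1)^{\alpha}\ll2^{n\alpha}$ since $|z|^2+1$ is bounded near $1$. Therefore
\[
\int_{L_n}(\operatorname{Im}z)^{-\alpha}(|z|^2+1)^{\alpha}\,d\nu_j\ll2^{n\alpha}\,\nu_j\big(B(1,c\,2^{-n})\big)\ll2^{n\alpha}\,2^{-n(2\alpha-1)}=2^{-n(\alpha-1)},
\]
and summing over $n\ge n_0$ gives a convergent geometric series because $\alpha>1$. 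The vertex $-1$ is handled identically, while for $\infty$ one uses the shells $S_n=\{|x|\le1,\ 2^n\le\operatorname{Im}z<2^{n+1}\}$, on which the weight is again $\asymp2^{n\alpha}$ (now $|z|^2+1\asymp2^{2n}$) and $\nu_j(S_n)\le\nu_j(\{|z|\ge2^n\})\ll2^{-n(2\alpha-1)}$, giving the same bound. (Alternatively one may reduce $\infty$ to a finite cusp using the order–three symmetry $g_0$ of Lemma~\ref{circles in the cusp 3}.) Adding the three cuspidal contributions to the core bound proves the theorem.

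The delicate point, which I expect to be the main obstacle, is the sharpness of the exponent $2\alpha-1$ above. The crude estimate $\nu_j(B(\xi,\epsilon))\ll\epsilon^{\alpha}$ valid for all $\xi\in\Lambda(\Gamma)$, such as (\ref{shadow inequality}), is insufficient: it only yields $\int_{L_n}\ll1$ and a divergent sum. One must use that along a ray into a cusp the distance to the orbit genuinely grows like $t$ (not merely $\le t$), which improves the decay from $\epsilon^{\alpha}$ to $\epsilon^{2\alpha-1}$; convergence of the resulting series $\sum2^{-n(\alpha-1)}$ is then exactly the inequality $\alpha>1$, which holds since $\alpha\approx1.3057$ is the Apollonian residual dimension. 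This balance between the cuspidal weight $(\operatorname{Im}z)^{-\alpha}$ and the enhanced parabolic decay of $\nu_j$ is the heart of the matter.
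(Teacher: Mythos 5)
Your argument is correct, but it takes a genuinely different route from the paper. The paper does not prove this statement at all: it quotes it as Theorem 9.3 of \cite{Harmonic analysis}, and then develops, in Propositions \ref{PS density at cusp} and \ref{hyperbolic PS density at cusp} and Corollaries \ref{measure of the cusp} and \ref{measure at 1,-1}, quantitative cusp estimates that would assemble into a proof along the same core-plus-three-cusps decomposition you use, but with a different key input. Where you invoke the Stratmann--Velani global measure formula (quoted in the paper only in the proof of the regularity criterion), the paper works with the explicit parabolic stabilizer of $\infty$: it covers $\Lambda(\Gamma)\setminus\{\infty\}$ by translates $\gamma_0^k\mathcal{F}$ of a bounded fundamental piece and gets $\nu_j(\gamma_0^k\mathcal{F})\ll k^{-2\alpha}$ purely from conformality of the density and the Busemann computation of Lemma \ref{Busemann function}; summing over $|k|\geq N$ gives exactly your decay $\nu_j(\{|z|\geq N\})\ll N^{-2\alpha+1}$ (Proposition \ref{PS density at cusp}), and the cusps at $\pm 1$ are reduced to the cusp at $\infty$ by the element $g_0$ (Corollary \ref{measure at 1,-1}). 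You correctly identify the crux of your own route: the bound (\ref{shadow inequality}), which only uses $d(\xi_t,\Gamma(j))\geq 0$, gives merely $\nu_j(B(p,\epsilon))\ll\epsilon^{\alpha}$ and a divergent dyadic sum, and one must instead use $d(\xi_t,\Gamma(j))=t+O(1)$ along rays into a rank-one cusp (valid since the orbit $\Gamma(j)$ may be taken disjoint from the invariant horoballs, so the depth inside $\mathcal{H}_p$ bounds the distance from below) to upgrade the decay to $\epsilon^{2\alpha-1}$, after which $\sum_n 2^{-n(\alpha-1)}$ converges precisely because $\alpha>1$. As for what each approach buys: yours is uniform over the three cusps and works verbatim for any geometrically finite group with only rank-one cusps and $\delta>1$, at the price of taking the (deeper) global measure formula as a black box; the paper's method is more elementary and self-contained, exploits the explicit arithmetic of this particular group, and---more importantly for the paper's purposes---produces the effective estimates of Proposition \ref{hyperbolic PS density at cusp} and Corollaries \ref{measure of the cusp}, \ref{measure at 1,-1} that are reused later in the error analysis, whereas your argument as written yields only the qualitative finiteness statement.
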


Recall that $\nu_j$ is the PS-density at $j$.
\begin{prop}
\label{PS density at cusp}
There exists  $N_1\geq 1$ such that for all $N\geq N_1$, we have
\begin{equation*}
\nu_j(\mathcal{E}_{N})\ll N^{-2\alpha+1},
\end{equation*}
where $\mathcal{E}_{N}=\{z\in \mathbb{C}:|z|\geq N\}$.
\end{prop}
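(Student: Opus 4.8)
The plan is to recognise $\mathcal{E}_N$ as a shrinking neighborhood of the parabolic fixed point $\infty$ and to estimate its PS-measure by the shadow estimate of \cite{Stratmann},\cite{Sullivan} already recalled in the proof of the regularity criterion. First I would record that $\infty$ is a bounded parabolic fixed point of $\Gamma$ of rank $1$: among the generating reflections $S_1,\dots,S_4$ only $S_3,S_4$ (reflections in the horizontal lines $\{y=0\}$ and $\{y=2\}$) fix $\infty$, and the orientation-preserving part of $\langle S_3,S_4\rangle$ is $\langle S_3S_4\rangle=\langle z\mapsto z-4i\rangle\cong\mathbb{Z}$. Hence $\operatorname{Stab}_{\mathcal A}(\infty)$ has rank $1$, and passing to the finite-index subgroup $\Gamma$ keeps this rank equal to $1$. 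Since $\alpha>1$, at this cusp we have $k(\xi)=1<\alpha$, and it is precisely this strict inequality that forces the use of the sharp shadow estimate rather than the crude bound $(\ref{shadow inequality})$.

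Next I would relate $\mathcal{E}_N$ to the shadow of a point on the geodesic ray toward $\infty$. Let $\xi=\infty$ and let $\xi_t=e^{t}j$ be the point at height $e^{t}$ on the vertical geodesic from $j$ toward $\infty$. The orthogonal geodesic plane $S(\xi_t)$ is the Euclidean hemisphere of radius $e^{t}$ centred at $0$, with boundary circle $\{|z|=e^{t}\}$; a short computation with the geodesic semicircles issuing from $j$ shows that the ray from $j$ to $w\in\mathbb{C}$ meets $S(\xi_t)$ precisely when $|w|\gtrsim e^{t}$, so that $b(\xi_t)\supseteq\{w:|w|\ge c_1 e^{t}\}$ for some $c_1>0$ independent of $t$. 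This is the natural analogue, in the spherical metric about $\infty$, of the relation $B(\xi,e^{-t}/c')\subset b(\xi_t)\subset B(\xi,c'e^{-t})$ used for bounded $\xi$. In particular $\mathcal{E}_N\subset b(\xi_t)$ whenever $c_1 e^{t}\le N$.

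Finally I would run the shadow estimate with the optimal choice of $t$. Because the ray $\xi_t$ penetrates deep into the horoball $\mathcal{H}_\infty=\{y\ge y_0\}$ while the orbit $\Gamma(j)$ stays at bounded height inside this horoball, the penetration distance obeys $d(\xi_t,\Gamma(j))\ge t-C$ for a constant $C$. Combined with $k(\xi_t)=1$ and $k(\xi_t)-\alpha<0$, the inequality $\nu_j(b(\xi_t))\le c\,e^{-\alpha t}e^{d(\xi_t,\Gamma(j))(k(\xi_t)-\alpha)}$ gives $\nu_j(b(\xi_t))\ll e^{-\alpha t}e^{(t-C)(1-\alpha)}\ll e^{(1-2\alpha)t}$. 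Taking $e^{t}=N/c_1$ and using $\mathcal{E}_N\subset b(\xi_t)$ then yields $\nu_j(\mathcal{E}_N)\le\nu_j(b(\xi_t))\ll N^{1-2\alpha}=N^{-2\alpha+1}$, which is the claim once $N_1$ is taken large enough that the horoball and deep-penetration estimates are valid for all $N\ge N_1$.

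The main obstacle is the treatment of the cusp at $\infty$ itself. Two points need care: the ball–shadow dictionary must be re-derived at $\infty$ (via the hemisphere $S(\xi_t)$ or the spherical metric, rather than Euclidean balls), and the target exponent $1-2\alpha$ is strictly below the $-\alpha$ coming from $(\ref{shadow inequality})$, so one genuinely needs the lower bound $d(\xi_t,\Gamma(j))\ge t-C$ expressing linear penetration of the rank-$1$ cusp. An alternative that avoids the first difficulty is to conjugate by a Möbius map sending $\infty$ to a finite rank-$1$ parabolic point (for instance the $g_0$ of Lemma \ref{circles in the cusp 3}) and to apply the standard estimate $\nu(B(p,r))\asymp r^{2\alpha-1}$ at a bounded rank-$1$ parabolic point; I would nonetheless present the direct computation as the primary argument, since it does not change the group.
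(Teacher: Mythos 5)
Your proposal is correct, but it takes a genuinely different route from the paper's proof. The paper argues by a direct tiling of the cusp: it uses that $\Lambda(\Gamma)\setminus\{\infty\}$ lies in the strip $\{|\operatorname{Re}z|\leq 1\}$, covers $\mathcal{E}_{cN}\cap\Lambda(\Gamma)$ by the translates $\gamma_0^{k}\mathcal{F}$, $|k|\geq N$, of a relatively compact fundamental domain $\mathcal{F}$ for the cusp stabilizer $\langle\gamma_0\rangle$, converts $\nu_j(\gamma_0^{k}\mathcal{F})=\nu_{\gamma_0^{-k}j}(\mathcal{F})$ by $\Gamma$-invariance of the density, bounds the conformal factor $e^{-\alpha\beta_z(\gamma_0^{\pm k}j,\,j)}\ll k^{-2\alpha}$ via the explicit Busemann formula (Lemma \ref{Busemann function}), and sums $\sum_{k\geq N}k^{-2\alpha}\ll N^{-2\alpha+1}$. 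You instead realize $\mathcal{E}_N$ as the shadow $b(\xi_t)$, $e^{t}\asymp N$, of a point on the vertical geodesic toward $\infty$ (your hemisphere computation in fact gives exactly $b(\xi_t)=\{w:|w|>e^{t}\}\cup\{\infty\}$), and invoke the sharp Stratmann--Velani global measure formula --- already quoted in the paper before (\ref{shadow inequality}) --- with $k(\xi_t)=1$ and the linear penetration bound $d(\xi_t,\Gamma(j))\geq t-C$; this is legitimate since that formula is stated for all $\xi\in\Lambda(\Gamma)$, including $\infty$, and you correctly note that the cruder consequence (\ref{shadow inequality}) would only give the insufficient exponent $-\alpha$. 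The two arguments are two faces of the same rank-one cusp excursion, and the exponents match. What each buys: the paper's proof is elementary and self-contained (only conformality of $\nu$ and its own Busemann lemma), whereas yours is shorter, explains conceptually where $-2\alpha+1$ comes from, and generalizes verbatim to any rank-one cusp of a geometrically finite group without using the strip structure of this particular limit set. The price of your route is three facts that should be justified rather than asserted: (i) the shadow dictionary at $\infty$ (your computation does this); (ii) the penetration bound, which needs the standard fact that the disjoint $\Gamma$-invariant horoball family can be chosen with $j$, hence all of $\Gamma(j)$, outside it; and (iii) $\operatorname{rank}(\infty)=1$: your check only inspects $\langle S_3,S_4\rangle$, but any element of $\mathcal{A}$ fixing $\infty$ is an affine map preserving the limit set, hence of the form $z\mapsto\pm z+ic$, so the parabolic subgroup is still rank one --- a one-line repair, consistent with the paper's assertion that $\operatorname{Stab}_\Gamma(\infty)=\langle\gamma_0\rangle$.
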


\begin{proof}
Because the limit set $\Lambda (\Gamma)$ of $\Gamma$ is exactly  the closure of $\mathcal{P}_0$,  we have
\begin{equation*}
\Lambda(\Gamma)\backslash \{\infty\}\subset \{|\operatorname{Re}z|\leq 1\}.
\end{equation*}

The stabilizer of $\infty$ in $\Gamma$ is generated by $\gamma_0=\begin{pmatrix} 1 & i y_0\\0 & 1\end{pmatrix}$ for some $y_0>0$. Define the following relatively compact set in $\mathcal{F}$:
\begin{equation*}
\mathcal{F}=\{z:|\operatorname{Re}z|\leq 1,\,\text{and}\,0\leq \operatorname{Im}z<y_0\}.
\end{equation*}
For any $z\in\Lambda (\Gamma)\backslash\{\infty\}$, there exists a unique $k\in \mathbb{Z}$ such that  $z\in \gamma_0^k\mathcal{F}$. This yields the estimate
\begin{equation*}
y_0^2\cdot(|k|+1)^2\geq |z|^2-|\operatorname{Re}z|^2\geq  |z|^2-1.
\end{equation*}
Therefore, there exist $c\geq 1$ and $N_1\geq 1$ such that for all $N\geq N_1$, we have
\begin{equation*}
\mathcal{E}_{cN}\cap \Lambda (\Gamma)\subset \bigcup_{|k|\geq N}\gamma_0^k\mathcal{F}.
\end{equation*}

We continue to prove the proposition. By the above inclusion, for any $N\geq N_1$, we have
\begin{align*}
\nu_j(\mathcal{E}_{cN}) & \leq \sum_{k\geq N}\nu_j(\gamma_0^k \mathcal{F})+\sum_{k\geq N}\nu_j(\gamma_0^{-k}\mathcal{F})\\
&= \sum_{k\geq N} \nu_{\gamma_0^{-k}j}(\mathcal{F})+\sum_{k\geq N}\nu_{\gamma_0^kj}(\mathcal{F})\\
&= \sum_{k\geq N} \int_{z\in \mathcal{F}} e^{-\alpha \beta_z(\gamma_0^{-k}j,j)}d\nu_j(z)+\sum_{k\geq N} \int_{z\in \mathcal{F}}e^{-\alpha \beta_z(\gamma_0^{k}j,j)}d\nu_j(z)\\
&\ll \sum_{k\geq N} k^{-2\alpha}\nu_{j}(\mathcal{F})+\sum_{k\geq N} k^{-2\alpha}\nu_j(\mathcal{F}) \;\;\; (\text{by Lemma}\,\,\ref{Busemann function})\\
&\ll N^{-2\alpha+1}.
\end{align*}
\end{proof}

\begin{prop}
\label{hyperbolic PS density at cusp}
There exists $T_0>1$ such that for any pair of real numbers $a,\,b$ with $T_0<a<b<\infty$ and any $\kappa$ with $\kappa>-2$ and $\kappa\neq -1$, we have
\begin{equation*}
\int_{\{z\in \mathcal{T}:a<\operatorname{Im}z<b\}} (\operatorname{Im}z)^{\kappa}d\omega_{\Gamma}(z)=O\left(\frac{a^{\kappa+1}+b^{\kappa+1}}{|\kappa+1|}\right).
\end{equation*}
\end{prop}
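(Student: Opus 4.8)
The plan is to pass from $\omega_\Gamma$ to the Patterson--Sullivan density $\nu_j$, for which the mass of cuspidal neighbourhoods is already controlled by Proposition \ref{PS density at cusp}, and then to integrate the resulting power of $\operatorname{Im}z$ against that tail estimate.

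First I would use the formula $d\omega_\Gamma=(|z|^2+1)^{\alpha}\,d\nu_j$ recorded after Lemma \ref{Busemann function} (here the critical exponent is $\delta=\alpha$). Since $\nu_j$ is supported on $\Lambda(\Gamma)$ and, as established in the proof of Proposition \ref{PS density at cusp}, $\Lambda(\Gamma)\setminus\{\infty\}\subset\{|\operatorname{Re}z|\leq 1\}$, on the support of $\nu_j$ with $\operatorname{Im}z\geq T_0$ (for $T_0$ large enough, say $T_0\geq\sqrt2$) one has
\begin{equation*}
(\operatorname{Im}z)^2\leq |z|^2+1\leq 2(\operatorname{Im}z)^2 .
\end{equation*}
Hence $(\operatorname{Im}z)^{\kappa}(|z|^2+1)^{\alpha}\asymp (\operatorname{Im}z)^{\kappa+2\alpha}$ on this region, so it suffices to bound
\begin{equation*}
J:=\int_{\{z\in\mathcal{T}:a<\operatorname{Im}z<b\}}(\operatorname{Im}z)^{\kappa+2\alpha}\,d\nu_j(z),
\end{equation*}
the implied constant depending only on $\alpha$.

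Next I would exploit the tail bound. Writing $F(y):=\nu_j(\{z:\operatorname{Im}z\geq y\})$ and using $\{\operatorname{Im}z\geq y\}\subset\{|z|\geq y\}=\mathcal{E}_y$ together with Proposition \ref{PS density at cusp}, we get $F(y)\ll y^{1-2\alpha}$ for $y\geq T_0$ (after enlarging $T_0$ to exceed $N_1$). Decomposing $a\leq \operatorname{Im}z<b$ into dyadic shells $\{2^n\leq\operatorname{Im}z<2^{n+1}\}$, the integrand is $\asymp 2^{n(\kappa+2\alpha)}$ on each shell while its $\nu_j$-mass is at most $F(2^n)\ll 2^{n(1-2\alpha)}$, so each shell contributes $\ll 2^{n(\kappa+1)}$. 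Equivalently one integrates by parts in the Stieltjes sense against $F$, which is the cleaner way to cope with the singularity of $\nu_j$: the factor $y^{2\alpha}$ in the integrand cancels the $y^{1-2\alpha}$ from the tail, leaving essentially $\int_a^b y^{\kappa}\,dy$.

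Summing the geometric series $\sum_{a\leq 2^n<b}2^{n(\kappa+1)}$ then yields the claim: when $\kappa+1>0$ it is dominated by the top term and is $\ll b^{\kappa+1}/(2^{\kappa+1}-1)$, when $\kappa+1<0$ by the bottom term and is $\ll a^{\kappa+1}/(1-2^{\kappa+1})$, and in both cases the denominator is comparable to $|\kappa+1|$, giving $J\ll (a^{\kappa+1}+b^{\kappa+1})/|\kappa+1|$. The one point requiring care is exactly the behaviour near $\kappa=-1$: the factor $|\kappa+1|^{-1}$ must be tracked rather than absorbed, and this is precisely what the geometric-series denominator $|2^{\kappa+1}-1|\asymp|\kappa+1|$ produces. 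The implied constant depends only on $\Gamma$ (through $\alpha$) and may be taken uniform for $\kappa$ in a compact subset of $(-2,\infty)$. I expect this endpoint bookkeeping, rather than any geometric input, to be the main obstacle, since all the geometric content is already packaged in Proposition \ref{PS density at cusp}.
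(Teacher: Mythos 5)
Your proposal is correct and follows essentially the same route as the paper's own proof: the same reduction $d\omega_\Gamma=(|z|^2+1)^{\alpha}d\nu_j\asymp(\operatorname{Im}z)^{2\alpha}d\nu_j$ on the region in question, followed by the same key input, namely the tail bound $\nu_j(\mathcal{E}_s)\ll s^{1-2\alpha}$ of Proposition \ref{PS density at cusp}. The only difference is the final bookkeeping: the paper writes $y^{\kappa+2\alpha}\asymp\int_0^y s^{\kappa+2\alpha-1}\,ds$ and applies Fubini to the pushforward measure $\pi_*\nu_j$ (this is where its hypothesis $\kappa>-2$, i.e.\ $\kappa+2\alpha>0$, enters), whereas you sum dyadic shells against the tail function --- an equivalent layer-cake computation, with the same correct tracking of the $|\kappa+1|^{-1}$ factor near $\kappa=-1$ via $|2^{\kappa+1}-1|\gg|\kappa+1|$.
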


\begin{proof}
For $a>0$  large enough, using Lemma \ref{Busemann function}, we have
\begin{align*}
\int_{ \{z\in \mathcal{T}:a<\operatorname{Im}z<b\}} (\operatorname{Im}z)^{\kappa}d\omega_{\Gamma}(z) &=\int_{ \{z\in\mathcal{T}:a<\operatorname{Im}z<b\}}(\operatorname{Im}z)^{\kappa}(|z|^2+1)^{\alpha}d\nu_j(z)\\
&\ll \int_{\{z\in \mathcal{T}:a<\operatorname{Im}z<b\}} (\operatorname{Im}z)^{\kappa+2\alpha}d\nu_j(z).
\end{align*}

Denote by $\pi$ the projection map from points in $\{z\in \mathcal{T}:a<\operatorname{Im}z<b\}$ to their $y$-coordinates. Let $\pi_{*}\nu_j$ be the pushforward of $\nu_j$. Letting $\mathcal{E}_s=\{z\in \mathbb{C}:|z|>s\}$ for $s>0$, we have 
\begin{align*}
&\int_{\{z\in\mathcal{T}:a<\operatorname{Im}z<b\}}(\operatorname{Im}z)^{\kappa+2\alpha}d\nu_j(z)\\
=&\int_{a}^{b} y^{\kappa+2\alpha} d\pi_*\nu_j(y)\\
\ll & \int_{a}^{b}\int_{0}^{y}s^{\kappa+2\alpha-1} ds d\pi_{*}\nu_j(y)\\
= & \int_{0}^{a}\int_{a}^{b}s^{\kappa+2\alpha-1}d\pi_*\nu_j(y)ds+\int_{a}^{b}\int_{s}^{b}s^{\kappa+2\alpha-1}d\pi_*\nu_j(y)ds \\
\leq  &\int_{0}^{a}s^{\kappa+2\alpha-1}\nu_{j}(\mathcal{E}_{a})ds+\int_{a}^{b}s^{\kappa+2\alpha-1}\nu_{j}(\mathcal{E}_{s})ds \\\ll & \int_{0}^{a}s^{\kappa+2\alpha-1}\cdot a^{1-2\alpha}ds+\int_{a}^{b} s^{\kappa+2\alpha-1}\cdot s^{1-2\alpha}ds  \;\;(\text{by Proposition}\, \ref{PS density at cusp})\\
\ll & |\kappa+1|^{-1}\cdot(a^{\kappa+1}+b^{\kappa+1}).
\end{align*}
\end{proof}

Now we estimate the size of the cuspidal neighborhoods under the measure $(\operatorname{Im}z)^{-\alpha}d\omega_{\Gamma}$.
\begin{cor}
\label{measure of the cusp}
For all small $t>0$, we have
\begin{equation*}
\int_{z\in \{w\in\mathcal{T}:\operatorname{Im}w>t^{-\eta}\}}(\operatorname{Im}z)^{-\alpha}d\omega_{\Gamma}(z)=O(t^{\eta(\alpha-1)}),
\end{equation*}
where $\alpha$ is the critical exponent of $\Gamma$ and $\eta$ is defined as (\ref{restricted triangle}).
\end{cor}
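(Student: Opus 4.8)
The plan is to apply Proposition \ref{hyperbolic PS density at cusp} directly with the exponent $\kappa = -\alpha$ and then let the upper cutoff tend to infinity. First I would observe that the region in question, $\{w \in \mathcal{T} : \operatorname{Im} w > t^{-\eta}\}$, is precisely a truncated cuspidal neighborhood of the type handled by that proposition once we set $a = t^{-\eta}$. Since $t$ is small, $a = t^{-\eta}$ is large, so $a > T_0$ for all sufficiently small $t$, which secures the hypothesis on the lower endpoint.

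Next I would verify that $\kappa = -\alpha$ is an admissible exponent. Proposition \ref{hyperbolic PS density at cusp} requires $\kappa > -2$ and $\kappa \neq -1$, and both hold because the residual dimension satisfies $1 < \alpha < 2$, so $-\alpha \in (-2,-1)$. In particular $\kappa + 1 = 1 - \alpha$ is strictly negative, which is the decisive sign for the passage to the limit below.

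With $a = t^{-\eta}$ fixed and $b < \infty$ arbitrary, Proposition \ref{hyperbolic PS density at cusp} gives
\begin{equation*}
\int_{\{z \in \mathcal{T} : a < \operatorname{Im} z < b\}} (\operatorname{Im} z)^{-\alpha}\, d\omega_{\Gamma}(z) = O\left(\frac{a^{1-\alpha} + b^{1-\alpha}}{\alpha - 1}\right).
\end{equation*}
Because $1 - \alpha < 0$, the term $b^{1-\alpha}$ tends to $0$ as $b \to \infty$. Since the integrand $(\operatorname{Im} z)^{-\alpha}$ is nonnegative and the sets $\{a < \operatorname{Im} z < b\}$ exhaust $\{w \in \mathcal{T} : \operatorname{Im} w > a\}$ as $b \to \infty$, the monotone convergence theorem shows that the integral over the full cuspidal neighborhood is $O(a^{1-\alpha})$.

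Finally, substituting $a = t^{-\eta}$ yields $a^{1-\alpha} = t^{-\eta(1-\alpha)} = t^{\eta(\alpha - 1)}$, which is exactly the claimed bound. I do not expect a genuine obstacle here, as Proposition \ref{hyperbolic PS density at cusp} does all the analytic work; the only points demanding care are confirming the admissibility of $\kappa = -\alpha$ via $1 < \alpha < 2$ and justifying the passage to $b = \infty$, both of which hinge entirely on the inequality $\alpha > 1$.
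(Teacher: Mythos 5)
Your proposal is correct and follows exactly the paper's own route: the paper proves this corollary by setting $\kappa=-\alpha$ in Proposition \ref{hyperbolic PS density at cusp} with $a=t^{-\eta}$ and letting the upper cutoff $b=N$ tend to infinity, using $\alpha>1$ so that $N^{1-\alpha}\to 0$. Your additional checks (admissibility of $\kappa=-\alpha$ via $1<\alpha<2$, monotone convergence to justify the limit) are just explicit versions of what the paper leaves implicit.
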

The corollary can be proved by setting $k=-\alpha$ in Proposition \ref{hyperbolic PS density at cusp} and taking the limit of $\int_{\{z\in\mathcal{T}:t^{-\eta}<\operatorname{Im}z<N\}}(\operatorname{Im}z)^{-\alpha}d\omega_{\Gamma}(z)$ as $N\to \infty$.

\begin{cor}
\label{measure at 1,-1}
For all sufficiently small $t>0$, we have
\begin{align*} 
\int_{\{z\in\mathcal{T}:|z\pm 1|\leq t^{\eta}\}} (\operatorname{Im}z)^{-\alpha}d\omega_{\Gamma}(z)&=O(t^{\eta (\alpha-1)}).
\end{align*}
\end{cor}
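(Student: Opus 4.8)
The plan is to deduce the two vertex estimates from the already-proved estimate at the cusp $\infty$ (Corollary \ref{measure of the cusp}) by exploiting the order-three symmetry $g_0$ of the ideal triangle. Recall from the proof of Lemma \ref{circles in the cusp 3} that $g_0\in\operatorname{PSL}_2(\mathbb{R})$ preserves $\mathcal P_0$, the hyperbolic metric, and $\mathcal T$; a direct computation gives $g_0(\infty)=1$, $g_0(1)=-1$, $g_0(-1)=\infty$, so $g_0$ cyclically permutes the ideal vertices $\{1,-1,\infty\}$. In particular $g_0^{-1}$ sends the vertex $1$ to $\infty$ and maps $\{z\in\mathcal T:|z-1|\le t^\eta\}$ into $\{z\in\mathcal T:|z|\ge c\,t^{-\eta}\}$ for some $c>0$ (writing $w=z-1$, one has $g_0^{-1}(z)=-4/w-1$); since $\omega_\Gamma$ is supported on $\Lambda(\Gamma)\subset\{|\operatorname{Re}z|\le 1\}$, on this support the latter set is contained in the $\infty$-cuspidal region $\{z\in\mathcal T:\operatorname{Im}z>c'\,t^{-\eta}\}$ of Corollary \ref{measure of the cusp}. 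The same works for the vertex $-1$ with $g_0$ in place of $g_0^{-1}$, since $g_0(-1)=\infty$.

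The crux is that the measure $d\mu:=(\operatorname{Im}z)^{-\alpha}\,d\omega_\Gamma(z)$ on $\mathbb H^2$ is $g_0$-invariant up to a positive constant. Using the formula $d\omega_\Gamma=(|z|^2+1)^\alpha\,d\nu_j$ recorded after Lemma \ref{Busemann function}, I write $d\mu=\rho(z)^\alpha\,d\nu_j(z)$ with $\rho(z):=\frac{|z|^2+1}{\operatorname{Im}z}$. Since $g_0$ preserves $\Lambda(\Gamma)=\overline{\mathcal P_0}$ and lies in the commensurator of $\Gamma$, the pushforward $(g_0^{-1})_*\nu_j$ is, by uniqueness of the PS-density up to scalars, of the form $\lambda\,\nu_{g_0^{-1}j}$ for some $\lambda>0$. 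Combined with $\frac{d\nu_{g_0^{-1}j}}{d\nu_j}(\xi)=e^{-\alpha\beta_\xi(g_0^{-1}j,j)}$, this gives $\mu(g_0E)=\lambda\int_E\rho(g_0\xi)^\alpha e^{-\alpha\beta_\xi(g_0^{-1}j,j)}\,d\nu_j(\xi)$ for every Borel $E$, so the invariance reduces to the pointwise identity
\[
\frac{\rho(g\xi)}{\rho(\xi)}=e^{\beta_\xi(g^{-1}j,j)}\qquad\bigl(g\in\operatorname{PSL}_2(\mathbb{R})\bigr).
\]
I would verify this from Lemma \ref{Busemann function}: writing $g=\begin{pmatrix}a&b\\ c&d\end{pmatrix}$ one finds $\rho(g\xi)/\rho(\xi)=\frac{|a\xi+b|^2+|c\xi+d|^2}{|\xi|^2+1}$, while evaluating $g^{-1}j=\bigl(\tfrac{-(ab+cd)}{a^2+c^2},\tfrac{1}{a^2+c^2}\bigr)$ in the Busemann formula yields the same expression; the two match precisely because $(a^2+c^2)(b^2+d^2)-(ab+cd)^2=(ad-bc)^2=1$.

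Granting the invariance, I conclude
\[
\int_{\{z\in\mathcal T:|z-1|\le t^\eta\}}(\operatorname{Im}z)^{-\alpha}\,d\omega_\Gamma=\mu\bigl(\{|z-1|\le t^\eta\}\bigr)\ll\mu\bigl(g_0^{-1}\{|z-1|\le t^\eta\}\bigr)\le\mu\bigl(\{\operatorname{Im}z>c't^{-\eta}\}\bigr)=O\bigl(t^{\eta(\alpha-1)}\bigr),
\]
the final equality being Corollary \ref{measure of the cusp} (replacing the threshold $t^{-\eta}$ by $c't^{-\eta}$ only changes the implied constant), and the $-1$ case is identical. The main obstacle is establishing the invariance of $\mu$: it rests both on the identity above, a clean computation hinging on $\det g=1$, and on the transformation law $(g_0^{-1})_*\nu_j=\lambda\,\nu_{g_0^{-1}j}$, which is where the arithmetic of the symmetry group $\mathcal A$ of the Apollonian packing must be used — exactly as in the combinatorial transport carried out in Lemma \ref{circles in the cusp 3}.
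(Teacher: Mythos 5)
Your starting point coincides with the paper's (use $g_0$, which cyclically permutes the ideal vertices $1,-1,\infty$, to move the vertex neighborhoods to the cusp at $\infty$), but your execution is genuinely different. The paper never compares $g_0^{-1}\Gamma g_0$ with $\Gamma$: it pulls back the PS-density to obtain a conformal density for the \emph{conjugated} group $g_0^{-1}\Gamma g_0$, changes variables in the integral, and then re-runs the proofs of Propositions \ref{PS density at cusp} and \ref{hyperbolic PS density at cusp} for that group and its density. You instead upgrade the pulled-back density to a constant multiple of $\Gamma$'s own PS-density, deduce quasi-invariance of $\mu=(\operatorname{Im}z)^{-\alpha}d\omega_\Gamma$ under $g_0$, and then quote Corollary \ref{measure of the cusp} verbatim. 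Your pointwise identity $\rho(g\xi)/\rho(\xi)=e^{\beta_\xi(g^{-1}j,j)}$ is correct (it is exactly Lemma \ref{Busemann function} plus Lagrange's identity $(a^2+c^2)(b^2+d^2)-(ab+cd)^2=(ad-bc)^2$), and you correctly note that one must restrict to the support $\Lambda(\Gamma)\subset\{|\operatorname{Re}z|\le 1\}\cup\{\infty\}$ to convert ``$|z|$ large'' into ``$\operatorname{Im}z$ large''.

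However, there is a genuine gap at the crux, namely the transformation law $(g_0^{-1})_*\nu_j=\lambda\,\nu_{g_0^{-1}j}$. As stated, this does not follow from ``uniqueness of the PS-density up to scalars'': the family $x\mapsto g_0^{*}\nu_{\Gamma,g_0x}$ is invariant under $g_0^{-1}\Gamma g_0$, not under $\Gamma$, so Sullivan's uniqueness for $\Gamma$ does not apply to it. You flag commensurability of $g_0$ as the hypothesis that rescues this, but you never prove that $g_0$ commensurates $\Gamma$, and the place you point to — Lemma \ref{circles in the cusp 3} — contains no such statement: that lemma only uses that $g_0$ permutes the circles of $\mathcal{P}_0$ and preserves the hyperbolic metric, which is a statement about the packing with no group-theoretic content about $\Gamma$ or its PS-density. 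The gap is fillable: with $g_0(z)=(z-3)/(z+1)$ one checks directly that $g_0(C_1)=C_2$, $g_0(C_2)=C_4$, $g_0(C_4)=C_1$, $g_0(C_3)=C_3$, so conjugation by $g_0$ permutes the generating reflections $S_1,S_2,S_3,S_4$ and hence $g_0$ normalizes $\mathcal{A}$; consequently $\Gamma':=\Gamma\cap g_0^{-1}\Gamma g_0$ has finite index in $\mathcal{A}$, is geometrically finite with $\delta_{\Gamma'}=\alpha$ and limit set $\overline{\mathcal{P}_0}$, and both $\{\nu_x\}$ and the pulled-back family are $\Gamma'$-invariant conformal densities of dimension $\alpha$, so uniqueness applied to $\Gamma'$ gives the proportionality (and $g_0^3=e$ even forces $\lambda=1$). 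With this inserted your argument is complete, and it is arguably cleaner than the paper's — at the price of group-theoretic input that the paper's route deliberately avoids needing.
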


\begin{proof}
As observed in Lemma \ref{circles in the cusp 3}, $g_0^{-1}$ maps $\{z\in \mathcal{T}: |z-1|\leq t^{\eta}\}$ to $\{z\in \mathcal{T}:\operatorname{Im}z\geq ct^{-\eta}\}$ for some constant $c>0$. 

For every $x\in \mathbb{H}^3$, set
\begin{equation*}
\tilde{\nu}_x:=(g_0)^{*}\nu_{\Gamma,g_0x},
\end{equation*}
where $g_0^{*}\nu_{\Gamma, g_0(x)}(R)=\nu_{\Gamma, g_0(x)}(g_0(R))$. It is easy to check that $\{\tilde{\nu}_x:x\in \mathbb{H}^3\}$ is a $g_0^{-1} \Gamma g_0$-invariant conformal density of dimension $\delta_{g_0^{-1}\Gamma g_0}=\delta_{\Gamma}=\alpha$.

Using these two observations, we obtain
\begin{align*}
&\int_{\{z\in\mathcal{T}:|z-1|\leq t^{\eta}\}} (\operatorname{Im}z)^{-\alpha}d\omega_{\Gamma}(z)\\
=&\int_{\{z\in \mathcal{T}:\operatorname{Im}z\geq ct^{-\eta}\}} (\operatorname{Im}(g_0 z))^{-\alpha}e^{\alpha \beta_{g_0z}(j,g_0z+j)}d\nu_{g_0^{-1}\Gamma g_0, g_0^{-1}j}(z).
\end{align*}
We can get an estimate of $\nu_{g_0^{-1}\Gamma g_0,g_0^{-1}j}(\{z\in\mathcal{T}: \operatorname{Im}z\geq ct^{-\eta}\})$ using the proof of Proposition \ref{PS density at cusp}. The corollary can be shown via a similar argument as the proof of Proposition \ref{hyperbolic PS density at cusp}.
 

\end{proof}

Noting that $d\omega_{\Gamma}$ is a locally finite measure, we obtain the following corollary by setting $\kappa=0$ in Proposition \ref{hyperbolic PS density at cusp} : 
\begin{cor}
\label{measure of finite region}
For all small $t>0$, we have
\begin{equation*}
\omega_{\Gamma}(\mathcal{T}(\eta,t))=O(t^{-\eta}).
\end{equation*}
\end{cor}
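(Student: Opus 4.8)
The plan is to split the truncated triangle $\mathcal{T}(\eta,t)$ (defined in (\ref{restricted triangle})) along the horizontal line $\operatorname{Im}z=T_0$, where $T_0>1$ is the constant furnished by Proposition \ref{hyperbolic PS density at cusp}, and to estimate the two resulting pieces by entirely different means. Writing
\[
\mathcal{T}(\eta,t)=\bigl(\mathcal{T}(\eta,t)\cap\{\operatorname{Im}z\le T_0\}\bigr)\cup\bigl(\mathcal{T}(\eta,t)\cap\{\operatorname{Im}z> T_0\}\bigr),
\]
I expect the bounded lower piece to contribute only $O(1)$, while the unbounded upper piece carries the full $O(t^{-\eta})$ bound.

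For the lower piece, I would note that every point of $\mathcal{T}$ satisfies $|\operatorname{Re}z|\le 1$, so this piece is contained in the fixed compact rectangle $[-1,1]\times[0,T_0]\subset\mathbb{C}$, independently of $t$. Since $\omega_{\Gamma}$ is a locally finite Borel measure on $\mathbb{C}$ (Definition \ref{measure on the complex plane}), its mass on this compact set is finite and independent of $t$, hence $O(1)$. For the upper piece I would invoke Proposition \ref{hyperbolic PS density at cusp} with $\kappa=0$, which is admissible since $0>-2$ and $0\ne-1$, taking $a$ slightly larger than $T_0$ and $b=t^{-\eta}$; for all small $t$ one has $t^{-\eta}>a$, so the hypotheses are met, and the proposition gives
\[
\omega_{\Gamma}\bigl(\{z\in\mathcal{T}:a<\operatorname{Im}z<t^{-\eta}\}\bigr)=O\bigl(a+t^{-\eta}\bigr)=O(t^{-\eta}),
\]
the last step because $a$ is a fixed constant while $t^{-\eta}\to\infty$ as $t\to0$.

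Adding the two contributions yields $\omega_{\Gamma}(\mathcal{T}(\eta,t))=O(1)+O(t^{-\eta})=O(t^{-\eta})$, as claimed. I anticipate no genuine obstacle here, since all the analytic content is already packaged in Proposition \ref{hyperbolic PS density at cusp} (which in turn rests on the cusp decay $\nu_j(\mathcal{E}_N)\ll N^{-2\alpha+1}$ of Proposition \ref{PS density at cusp}); the corollary is merely the special case $\kappa=0$ together with the trivial bounded part. The only points I would keep honest are that the splitting constant $T_0$ must be chosen independently of $t$, so that local finiteness delivers a uniform $O(1)$ on the lower piece, and that the strict inequality $a>T_0$ in the hypothesis of Proposition \ref{hyperbolic PS density at cusp} forces me to enlarge $a$ infinitesimally beyond $T_0$.
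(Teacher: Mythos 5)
Your proposal is correct and follows essentially the same route as the paper, whose one-line proof likewise combines local finiteness of $\omega_{\Gamma}$ (for the part of $\mathcal{T}(\eta,t)$ below a fixed height) with Proposition \ref{hyperbolic PS density at cusp} at $\kappa=0$ (for the part extending up to height $t^{-\eta}$). Your write-up merely makes explicit the splitting at a fixed $a>T_0$ and the resulting $O(1)+O(t^{-\eta})$ bound that the paper leaves implicit.
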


\subsubsection{Relate $m^{\operatorname{BR}}$ with $(\operatorname{Im}z)^{-\alpha}d\omega_{\Gamma}$}

For small $\epsilon>0$, let $U_{\epsilon}$ be the symmetric $\epsilon$-neighborhood of $e$ in G. Recall that we defined
\begin{equation*}
\mathcal{T}(\eta,t)^{+}_{\epsilon}=\bigcup_{u\in U_{\epsilon}}u\mathcal{T}(\eta,t) \quad \text{and}\quad \mathcal{T}(\eta,t)^{-}_{\epsilon}=\bigcap_{u\in U_{\epsilon}}u\mathcal{T}(\eta,t).
\end{equation*}
Let $\psi^{\epsilon}$ be a non-negative function in $C^{\infty}_{c}(G)$ supported in $U_{\epsilon}$ with integral one. Set $\Psi^{\epsilon}\in C^{\infty}_{c}(\Gamma \backslash G)$ to be the $\Gamma$-average of $\psi^{\epsilon}$. 

We establish the following relation between $m^{\operatorname{BR}}$ and $(\operatorname{Im}z)^{-\alpha}d\omega_{\Gamma}$:
\begin{prop}
\label{translation between measures}
For all small $\epsilon>0$ and $t>0$, we have
\begin{equation*}
\int_{z\in \mathcal{T}(\eta,t)} m^{\operatorname{BR}}(\Psi^{\epsilon}_{-z})(\operatorname{Im}z)^{-\alpha}dn_{-z}=(1+O(\epsilon \cdot t^{-\eta}))\int_{z\in \mathcal{T}(\eta,t)^{\pm}_{\epsilon}}(\operatorname{Im}z)^{-\alpha}d\omega_{\Gamma}(z),
\end{equation*}
where $\Psi^{\epsilon}_{-z} \in C^{\infty}_c(\Gamma \backslash G)^{M}$ is given by $\Psi^{\epsilon}_{-z}(g):=\int_{m\in M} \Psi^{\epsilon}(gmn_{-z})dm$.
\end{prop}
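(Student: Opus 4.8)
The plan is to follow the proof of Proposition \ref{relation of measures} (modeled on Lemma 5.7 of \cite{Asymptotic}) almost verbatim, inserting the weight $(\operatorname{Im}z)^{-\alpha}$ at each stage and controlling how it interacts with the perturbation coming from the support $U_\epsilon$ of $\psi^\epsilon$. First I would introduce the weighted reward function on $MAN^{-}N \subset G$,
\begin{equation*}
\mathcal{R}(m a_s n_w^{-} n_z) = e^{-\alpha s}(\operatorname{Im}(-z))^{-\alpha}\chi_{\mathcal{T}(\eta,t)}(-z),
\end{equation*}
regarded as a function on $G$, which is the natural modification of the function $\mathcal{R}_E$ used before. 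As in the proof of Proposition \ref{relation of measures}, unfolding the $\Gamma$-average and using the $K$-integral representation of $m^{\operatorname{BR}}$ rewrites the left-hand side as
\begin{equation*}
\int_{z\in \mathcal{T}(\eta,t)} m^{\operatorname{BR}}(\Psi^{\epsilon}_{-z})(\operatorname{Im}z)^{-\alpha}\,dn_{-z} = \int_{g\in U_{\epsilon}}\psi^{\epsilon}(g)\int_{k\in K}\mathcal{R}(k^{-1}g)\,d\nu_j(k(0))\,dg.
\end{equation*}

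Next I would write $k^{-1}=m_{\theta}a_{s}n_{w}^{-}n_{z}$ and $g=m_{\theta_1}a_{s_1}n_{w_1}^{-}n_{z_1}\in U_\epsilon$ and invoke Lemma \ref{coordinates for product} to compute the $MAN^{-}N$-coordinates of $k^{-1}g$; this gives $s_0 = s + s_1 + 2\log|1+e^{-s_1-2i\theta_1}w_1 z|$, so that $e^{-\alpha s_0}=e^{-\alpha s}\bigl(1+O(\epsilon|z|)\bigr)$, together with the corresponding perturbation of the base point in $\mathbb{C}$ and of its imaginary part. The decisive point is the geometry of $\mathcal{T}(\eta,t)$: on this set $t^{\eta}\le \operatorname{Im}z\le t^{-\eta}$ and $|\operatorname{Re}z|\lesssim 1$, so $h_{\mathcal{T}(\eta,t)}=\max_{z}|z|+1\asymp t^{-\eta}$. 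Since $(\operatorname{Im}z)^{-\alpha}$ is smooth with $\operatorname{Im}z\ge t^{\eta}$ bounded away from $0$, its relative variation under the $U_\epsilon$-perturbation is governed by the same quantity $O(h_{\mathcal{T}(\eta,t)}\epsilon)=O(\epsilon\, t^{-\eta})$ that controls the variation of $e^{-\alpha s_0}$. Collecting these estimates yields, for every $g\in U_\epsilon$,
\begin{equation*}
\int_{k\in K/M}\mathcal{R}(k^{-1}g)\,d\nu_j(k(0)) = (1+O(\epsilon\, t^{-\eta}))\int_{k(0)\in \mathcal{T}(\eta,t)^{\pm}_{\epsilon}} e^{-\alpha s}(\operatorname{Im}(k(0)))^{-\alpha}\,d\nu_j(k(0)),
\end{equation*}
the $\pm$ arising according to whether the perturbed indicator is dominated by or dominates $\chi_{\mathcal{T}(\eta,t)}$.

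Finally I would convert the weighted PS-integral into the weighted $\omega_\Gamma$-measure. Applying the identity of Proposition 5.4 in \cite{Asymptotic} (the relation $\int_{k(0)\in E}e^{-\alpha s}\,d\nu_j(k(0)) = \omega_\Gamma(E)$ underlying Proposition \ref{relation of measures}) with the extra factor $(\operatorname{Im}\,\cdot\,)^{-\alpha}$, which depends only on the point $z=k(0)$ and therefore factors identically through both sides, gives
\begin{equation*}
\int_{k(0)\in \mathcal{T}(\eta,t)^{\pm}_{\epsilon}} e^{-\alpha s}(\operatorname{Im}(k(0)))^{-\alpha}\,d\nu_j(k(0)) = \int_{z\in \mathcal{T}(\eta,t)^{\pm}_{\epsilon}}(\operatorname{Im}z)^{-\alpha}\,d\omega_{\Gamma}(z);
\end{equation*}
integrating against $\psi^\epsilon$, which has total mass one and is supported in $U_\epsilon$, removes the $g$-dependence and produces the stated formula. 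The main obstacle, and the only place where the argument genuinely departs from \cite{Asymptotic}, is the uniform control of the error term: one must verify that the combined multiplicative perturbation of $e^{-\alpha s_0}$ and of the weight $(\operatorname{Im}z)^{-\alpha}$ is $O(\epsilon\, t^{-\eta})$ uniformly over $\mathcal{T}(\eta,t)$, which is exactly where the two-sided bound $t^{\eta}\le \operatorname{Im}z\le t^{-\eta}$ defining $\mathcal{T}(\eta,t)$ enters.
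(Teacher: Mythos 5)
Your proposal is correct and follows essentially the same route as the paper: the paper likewise defines the weighted function $\mathcal{F}_{\psi}(ma_tn^{-}_{x}n_{z})=e^{-\alpha t}\psi(-z)(\operatorname{Im}(-z))^{-\alpha}$ (your $\mathcal{R}$), proves the weighted analogue of Proposition 5.4 of \cite{Asymptotic}, and then establishes the perturbation estimate via Lemma \ref{coordinates for product}, with the error $O(\epsilon\, t^{-\eta})$ coming exactly from the bounds $t^{\eta}\leq \operatorname{Im}z\leq t^{-\eta}$ and $|\operatorname{Re}z|\leq 1$ on $\mathcal{T}(\eta,t)$, before invoking the unfolding argument of Lemma 5.7 in \cite{Asymptotic}. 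The only cosmetic difference is the order of steps (you unfold first and convert to $\omega_{\Gamma}$ last, while the paper proves the two lemmas first), which does not affect the substance.
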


We first introduce a function on $G$:
\begin{defn} For $\psi \in C_c(\mathbb{H}^2)$, define a function $\mathcal{F}_{\psi}$ on $MAN^{-}N \subset G$ by
\begin{equation*}
\mathcal{F}_{\psi}(ma_tn^{-}_{x}n_{z})=\begin{cases} e^{-\alpha t}\psi(-z)(-y)^{-\alpha},\,\mbox{if $z=x+iy$ with $y<0$}\\
0,\, \mbox{otherwise}.
\end{cases}
\end{equation*}
\end{defn}
Since the product map $M \times A \times N^{-} \times N \to G$ has a diffeomorphic image,  $\mathcal{F}_{\psi}$ can be regarded as a function on $G$.

\begin{lem}
For any $\psi \in C_c(\mathbb{H}^2)$,
\begin{equation*}
\int_{z\in \mathbb{H}^2}\psi(z)(\operatorname{Im}z)^{-\alpha}d\omega_{\Gamma}(z)=\int_{k\in K/M} \mathcal{F}_{\psi}(k^{-1}) d \nu_j(k(0)).
\end{equation*}
\end{lem}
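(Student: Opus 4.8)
The plan is to compute both sides explicitly as integrals over the boundary sphere $\hat{\mathbb C}$ and to match their integrands pointwise. For the left-hand side I would use the formula for $\omega_\Gamma$ recorded just after Lemma \ref{Busemann function} (with $\delta=\alpha$), namely $d\omega_\Gamma=(|z|^2+1)^\alpha\,d\nu_j$, to rewrite it as
\begin{equation*}
\int_{z\in\mathbb{H}^2}\psi(z)(\operatorname{Im}z)^{-\alpha}(|z|^2+1)^\alpha\,d\nu_j(z).
\end{equation*}
For the right-hand side I would identify $K/M$ with $\hat{\mathbb C}$ via $kM\mapsto k(0)$ (a bijection, since $K$ acts transitively on $\partial\mathbb H^3$ with $\operatorname{Stab}_K(0)=M$), so that, writing $\xi:=k(0)$, the integral becomes $\int_{\xi\in\hat{\mathbb C}}\mathcal F_\psi(k^{-1})\,d\nu_j(\xi)$. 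The single exceptional point $\xi=\infty$, where $k^{-1}$ leaves the big cell $MAN^-N$, is $\nu_j$-null and may be ignored.

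The heart of the matter is to express the $MAN^-N$-coordinates of $k^{-1}$ in terms of $\xi$. Writing $k^{-1}=m_\theta a_t n^-_w n_z$, and hence $k=n_{-z}n^-_{-w}a_{-t}m_\theta^{-1}$, and using that $M$, $A$ and $N^-$ all fix $0\in\hat{\mathbb C}$ while $n_{-z}(0)=-z$, I get $\xi=k(0)=-z$, so $z=-\xi$. For the $A$-coordinate I would carry out the matrix product $m_\theta a_tn^-_wn_z$ and impose that $k^{-1}\in K=\operatorname{PSU}(2)$; the unitarity relation applied to the top row $(e^{i\theta+t/2},\,e^{i\theta+t/2}z)$ gives $e^{t}(1+|z|^2)=1$, that is, $e^{-t}=1+|\xi|^2$.

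Substituting these into the definition of $\mathcal F_\psi$ finishes the proof: the defining condition $\operatorname{Im}(z)<0$ becomes $\operatorname{Im}(\xi)>0$, i.e.\ $\xi\in\mathbb H^2$, and on this set
\begin{equation*}
\mathcal F_\psi(k^{-1})=e^{-\alpha t}\psi(-z)(-\operatorname{Im}z)^{-\alpha}=(1+|\xi|^2)^\alpha\,\psi(\xi)\,(\operatorname{Im}\xi)^{-\alpha},
\end{equation*}
which is exactly the integrand obtained for the left-hand side. I would also note that $\mathcal F_\psi(k^{-1})$ depends only on the coset $kM$ (since $\mathcal F_\psi$ ignores the $M$-factor, and replacing $k$ by $km$ only alters $m_\theta$), so the function descends to $K/M$. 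The only genuine obstacle is the bookkeeping in the Bruhat-cell computation of the second step, keeping the roles of $z=-\xi$ and of $t$ straight in the $\operatorname{SU}(2)$ multiplication, together with the minor points that the big cell has $\nu_j$-null complement and that the integrand is well defined on $K/M$; once these are in place the identification of integrands is immediate.
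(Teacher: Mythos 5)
Your proof is correct and is essentially the argument the paper invokes: the paper's "proof" is a one-line citation to the proof of Proposition 5.4 in \cite{Asymptotic}, which proceeds by exactly this computation — writing $k^{-1}=m_\theta a_t n_w^- n_z$, identifying $k(0)=-z$ and $e^{-t}=1+|k(0)|^2$, and matching against $d\omega_\Gamma=(|z|^2+1)^{\alpha}\,d\nu_j$. Your unitarity derivation of $e^{-t}=1+|\xi|^2$ and your dismissal of the exceptional point at $\infty$ (which is indeed $\nu_j$-null, since the Patterson--Sullivan measure of a non-elementary geometrically finite group is non-atomic) are both sound, so your write-up is simply a self-contained version of the cited argument.
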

This lemma follows from a verbatim repetition of the proof of Proposition 5.4 in \cite{Asymptotic}.

\begin{lem}
\label{lemma for measure}
For small $\epsilon>0$ and any $g\in U_{\epsilon}$,
\begin{equation*}
\int_{k\in K/M} \mathcal{F}_{\mathcal{T}(\eta,t)}(k^{-1}g)d\nu_j(k(0))=(1+O(\epsilon\cdot t^{-\eta}))\int_{z\in \mathcal{T}(\eta,t)^{\pm}_{\epsilon}}(\operatorname{Im}z)^{-\alpha}d\omega_{\Gamma}(z),
\end{equation*}
where $\mathcal{F}_{\mathcal{T}(\eta,t)}:=\mathcal{F}_{\chi_{\mathcal{T}(\eta,t)}}$ with $\chi_{\mathcal{T}(\eta,t)}$ the characteristic function of $\mathcal{T}(\eta,t)$.
\end{lem}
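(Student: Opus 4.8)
The plan is to follow the proof of Proposition \ref{relation of measures}, replacing $\mathcal{R}_E$ by $\mathcal{F}_{\mathcal{T}(\eta,t)}$ and tracking carefully the dependence on $t$ coming from the fact that $\mathcal{T}(\eta,t)$ reaches height $t^{-\eta}$ in the $\operatorname{Im}$-direction. First I would record that if $k^{-1}=m_\theta a_s n^-_w n_z$ in $MAN^-N$-coordinates, then a direct computation gives $k(0)=-z$, so that
\[
\mathcal{F}_{\mathcal{T}(\eta,t)}(k^{-1})=e^{-\alpha s}\,\chi_{\mathcal{T}(\eta,t)}(k(0))\,(\operatorname{Im}k(0))^{-\alpha}.
\]
Writing $g=m_{\theta_1}a_{s_1}n^-_{w_1}n_{z_1}\in U_\epsilon$ and $k^{-1}g=m_{\theta_0}a_{s_0}n^-_{w_0}n_{z_0}$, Lemma \ref{coordinates for product} supplies $s_0$ and $z_0$ explicitly, and the same inversion computation as in Proposition \ref{relation of measures} shows $-z_0=g^{-1}(k(0))$. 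Hence, with $z=-k(0)$,
\[
\mathcal{F}_{\mathcal{T}(\eta,t)}(k^{-1}g)=e^{-\alpha s_0}\,\chi_{\mathcal{T}(\eta,t)}(g^{-1}k(0))\,(\operatorname{Im}(g^{-1}k(0)))^{-\alpha},\qquad s_0=s+s_1+2\log|1+e^{-s_1-2i\theta_1}w_1z|.
\]

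The task then splits into controlling, uniformly over the support of the integrand, the two ratios $e^{-\alpha s_0}/e^{-\alpha s}$ and $(\operatorname{Im}(g^{-1}k(0))/\operatorname{Im}k(0))^{-\alpha}$, together with the replacement of $\chi_{\mathcal{T}(\eta,t)}(g^{-1}k(0))$ by a characteristic function of $k(0)$. For the last point, since $g^{-1}\xi\in\mathcal{T}(\eta,t)$ is equivalent to $\xi\in g\mathcal{T}(\eta,t)$ and $\mathcal{T}(\eta,t)^{-}_{\epsilon}\subset g\mathcal{T}(\eta,t)\subset\mathcal{T}(\eta,t)^{+}_{\epsilon}$ for every $g\in U_\epsilon$, I get the upper bound with $\mathcal{T}(\eta,t)^{+}_{\epsilon}$ and the lower bound with $\mathcal{T}(\eta,t)^{-}_{\epsilon}$, which is the meaning of the $\pm$ in the statement. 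For the ratios, the key geometric observation is that on the ideal triangle $|\operatorname{Re}\xi|\le 1$, so $|\xi|\ll 1+\operatorname{Im}\xi$ throughout $g\mathcal{T}(\eta,t)$ (for $\epsilon$ small in terms of $t$, so that the fattening is lower order). Writing $\xi=k(0)$, $y=\operatorname{Im}\xi$ with $t^{\eta}\ll y\ll t^{-\eta}$, and $g^{-1}=\left(\begin{smallmatrix}a&b\\c&d\end{smallmatrix}\right)$ with $a,d=1+O(\epsilon)$, $b,c=O(\epsilon)$, the Möbius formula gives $g^{-1}\xi-\xi=O(\epsilon(1+y^2))$, the quadratic contribution coming from the term $c\xi^2$. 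Dividing by $y$ and using $y\ge t^{\eta}$ when $y\le 1$ and $y\le t^{-\eta}$ when $y\ge 1$ yields, in both regimes, $\operatorname{Im}(g^{-1}\xi)/\operatorname{Im}\xi=1+O(\epsilon t^{-\eta})$; the identical bound on $s_0-s$ follows from the logarithmic term above, since $e^{-s_1-2i\theta_1}w_1z=O(\epsilon(1+y))$. This correlation between $|\xi|$ and $\operatorname{Im}\xi$ in the ideal triangle is exactly what turns the a priori displacement $O(\epsilon t^{-2\eta})$ into an $O(\epsilon t^{-\eta})$ relative error, and I expect establishing this uniform estimate to be the main obstacle in the argument.

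Finally, combining the two ratio estimates with the sandwiching of the characteristic function gives
\[
\int_{k\in K/M}\mathcal{F}_{\mathcal{T}(\eta,t)}(k^{-1}g)\,d\nu_j(k(0))=(1+O(\epsilon t^{-\eta}))\int_{k\in K/M}e^{-\alpha s}\,\chi_{\mathcal{T}(\eta,t)^{\pm}_{\epsilon}}(k(0))\,(\operatorname{Im}k(0))^{-\alpha}\,d\nu_j(k(0)),
\]
and I conclude by applying the preceding lemma (the analogue of Proposition 5.4 in \cite{Asymptotic}) with $\psi=\chi_{\mathcal{T}(\eta,t)^{\pm}_{\epsilon}}$, which identifies the right-hand integral with $\int_{z\in\mathcal{T}(\eta,t)^{\pm}_{\epsilon}}(\operatorname{Im}z)^{-\alpha}\,d\omega_{\Gamma}(z)$. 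This is precisely the asserted estimate.
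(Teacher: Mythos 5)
Your proposal is correct and takes essentially the same route as the paper's own proof: write $k^{-1}$ and $g$ in $MAN^{-}N$ coordinates, apply Lemma \ref{coordinates for product}, sandwich the characteristic function to get the $\mathcal{T}(\eta,t)^{\pm}_{\epsilon}$ range, show the multiplicative errors in $e^{-\alpha t_0}$ and $\operatorname{Im}(-z_0)$ are $1+O(\epsilon t^{-\eta})$ on the support, and finish with the preceding lemma identifying $\int_{K/M}\mathcal{F}_{\psi}(k^{-1})\,d\nu_j(k(0))$ with $\int\psi\,(\operatorname{Im}z)^{-\alpha}d\omega_{\Gamma}$. Your explicit appeal to $|\operatorname{Re}\xi|\leq 1$ on the triangle merely makes precise why the paper's factors $|z|\epsilon$ and $(|z|+|z|^{-1})\epsilon$ are $O(\epsilon t^{-\eta})$, a point the paper leaves implicit.
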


\begin{proof}
Write $k^{-1}=m_{\theta}a_tn^{-}_{w}n_{z}$ and $g=m_{\theta_1}a_{t_1}n^{-}_{w_1}n_{z_1} \in U_{\epsilon}$. By Lemma \ref{coordinates for product}, we have $k^{-1}g=m_{\theta_0}a_{t_0}n^{-}_{w_0}n_{z_0}$, where $t_0=t+t_1+2\log(|1+e^{-t_1-2i\theta_1}w_1z|)$ and $z_0=e^{-t_1-2i\theta_1}z(1+w_1e^{-t_1-2i\theta_1}z)^{-1}+z_1$. 
We have
\begin{align*}
&\int_{k\in K/M} \mathcal{F}_{\mathcal{T}(\eta,t)}(k^{-1}g)d\nu_j(k(0))
\\ =& \int _{k\in K/M} e^{-\alpha t_0} \chi_{\mathcal{T}(\eta,t)}(g^{-1}k(0))\operatorname{Im}(-z_0)^{-\alpha}d\nu_j(k(0))\\
=&\int_{k(0)\in \mathcal{T}(\eta,t)^{\pm}_{\epsilon}}e^{-\alpha t_0}\operatorname{Im}(-z_0)^{-\alpha} d\nu_j(k(0)).
\end{align*}
Using the estimate
\begin{equation*}
e^{-\alpha t_0}=(1+O(|z|\epsilon))e^{-\alpha t}\,\,\text{and}\,\,z_0=(1+O((|z|+|z|^{-1})\epsilon))z,
\end{equation*}
we reach the conclusion that
\begin{align*}
\int_{k\in K/M} \mathcal{F}_{\mathcal{T}(\eta,t)}(k^{-1}g) d\nu_j(k(0))
=(1+O(\epsilon\, t^{-\eta}))\int_{z\in\mathcal{T}(\eta,t)^{\pm}_{\epsilon}}(\operatorname{Im}z)^{-\alpha}d\omega_{\Gamma}(z). 
\end{align*}
\end{proof}

With Lemma \ref{lemma for measure} available, Proposition \ref{translation between measures} follows from the same argument as the proof of Lemma 5.7 in \cite{Asymptotic}.

\subsection{Conclusion}

\begin{thm}
\label{effective volume counting}
There exists a constant $\rho>0$, such that  as $t \to 0$, we have
\begin{equation*}
N_t(\mathcal{P}(\mathcal{T}))=\frac{\operatorname{sk}_{\Gamma}(\mathcal{P}_0)}{\alpha|m^{\operatorname{BMS}}_{\Gamma}|}\,\left(\frac{t}{\pi}\right)^{-\frac{\alpha}{2}}\,\int_{\overline{\mathcal{P}(\mathcal{T})}}(\operatorname{Im}z)^{-\alpha}d\mathcal{H}^{\alpha}(z)+O(t^{-\frac{\alpha}{2}+\rho}),
\end{equation*}
where $d\mathcal{H}^{\alpha}$ is the $\alpha$-dimensional Hausdorff measure on $\overline{\mathcal{P}(\mathcal{T})}$.
\end{thm}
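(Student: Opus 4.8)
The plan is to transport the entire apparatus of Theorem~\ref{effective circle counting thm} to the hyperbolic counting problem, working first on the truncated region $\mathcal{T}(\eta,t)$ with the additional weight $(\operatorname{Im}z)^{-\alpha}$, and then letting the truncation tend to the full ideal triangle. First I would pass to the bulk: splitting $N_t(\mathcal{P}(\mathcal{T}))$ according to whether the center $e_C$ lies in $\mathcal{T}(\eta,t)$ or not, the ``outside'' part is bounded by $n(\mathcal{P}_0,t)=O(t^{-\alpha/2+\eta(\alpha-1)})$ via Proposition~\ref{circles in the cusp 2}. In the bulk the circles of $\mathcal{P}_0$ meeting $\mathcal{T}(\eta,t)$ are exactly those of $\mathcal{P}(\mathcal{T})$, since $\mathcal{T}(\eta,t)$ lies in the interior of $\mathcal{T}$ and the circles of the Apollonian packing $\mathcal{P}_0$ do not cross the three bounding geodesics of $\mathcal{T}$. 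Decomposing $\mathcal{P}_0$ into its finitely many $\Gamma$-orbits $\Gamma(C_0^{(i)})$, as in Step~2 of the proof of Theorem~\ref{effective circle counting thm}, then reduces everything to estimating each $N_t(\Gamma(C_0)\cap\mathcal{T}(\eta,t))$.

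For a single orbit I would run the argument through Proposition~\ref{reformulation inequality}, which sandwiches $N_t(\Gamma(C_0)\cap\mathcal{T}(\eta,t))$ between the orbit counts $\#([e]\Gamma\cap H\backslash HKP_{\pm}(\mathcal{T}(\eta,t)))$. Thickening by the $\Gamma$-average $\Psi^\epsilon$ of a bump supported in $U_\epsilon$ and unfolding the orbit sum exactly as in \eqref{counting equation}, and using the foliation $\{a_sn_{-z}:z\in\mathcal{T}(\eta,t),\,0\le s\le h^{\pm}_t(z)\}$ together with $d\lambda(a_sn)=e^{2s}\,ds\,dn$, the main term becomes
\begin{equation*}
\int_{z\in\mathcal{T}(\eta,t)}\int_{0}^{h^{\pm}_t(z)}e^{2s}\Big(\int_{\Gamma_H\backslash C_0^{\dagger}}\int_M\Psi^\epsilon(hma_sn_{-z})\,dm\,dh\Big)ds\,dn_{-z}.
\end{equation*}
Since $M$ commutes with $A$, the inner integral is $\int_{\Gamma_H\backslash C_0^{\dagger}}\Psi^\epsilon_{-z}(ha_s)\,dh$, to which I apply the effective equidistribution Theorem~\ref{effective distribution} with $\phi\equiv1$ and $\delta=\alpha$; it equals $e^{(\alpha-2)s}\big(\tfrac{|\mu^{\operatorname{PS}}_H|}{|m^{\operatorname{BMS}}|}m^{\operatorname{BR}}(\Psi^\epsilon_{-z})+O(S_l(\Psi^\epsilon_{-z})e^{-\eta_0 s})\big)$. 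Carrying out the elementary $s$-integral of $e^{\alpha s}$ produces the factor $\tfrac1\alpha e^{\alpha h^{\pm}_t(z)}$, and since $e^{\alpha h^{\pm}_t(z)}=(\beta(t)\operatorname{Im}z)^{-\alpha}(1+O(t))$ with $\beta(t)=(t/\pi)^{1/2}(1+O(t))$, the single-orbit main term is $\tfrac{|\mu^{\operatorname{PS}}_H|}{\alpha|m^{\operatorname{BMS}}|}(t/\pi)^{-\alpha/2}\int_{\mathcal{T}(\eta,t)}m^{\operatorname{BR}}(\Psi^\epsilon_{-z})(\operatorname{Im}z)^{-\alpha}\,dn_{-z}$.

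It then remains to identify and extend the measure. By Proposition~\ref{translation between measures} this last integral equals $(1+O(\epsilon t^{-\eta}))\int_{\mathcal{T}(\eta,t)^{\pm}_\epsilon}(\operatorname{Im}z)^{-\alpha}\,d\omega_\Gamma$, and Corollaries~\ref{measure of the cusp} and \ref{measure at 1,-1} allow me to replace the domain by all of $\mathcal{T}$ at the cost of $O(t^{\eta(\alpha-1)})$. Because $\Gamma$ has only rank-one cusps and $\alpha>1$, the measure $\omega_\Gamma$ restricted to $\Lambda(\Gamma)\cap\mathcal{T}=\overline{\mathcal{P}(\mathcal{T})}$ is the $\alpha$-dimensional Hausdorff measure \cite{Sullivan}, so $\int_{\mathcal{T}}(\operatorname{Im}z)^{-\alpha}d\omega_\Gamma=\int_{\overline{\mathcal{P}(\mathcal{T})}}(\operatorname{Im}z)^{-\alpha}d\mathcal{H}^\alpha$, which is finite by Theorem~9.3 of \cite{Harmonic analysis}. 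Summing the single-orbit main terms over the finitely many $\Gamma$-orbits of $\mathcal{P}_0$ turns $|\mu^{\operatorname{PS}}_H|$ into $\operatorname{sk}_\Gamma(\mathcal{P}_0)$ (Definition~\ref{skinning size of circle packing}), yielding exactly the asserted leading coefficient.

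The hard part will be the error bookkeeping, which is genuinely more delicate than in Theorem~\ref{effective circle counting thm} because here both the integration region $\mathcal{T}(\eta,t)$ and the weight $(\operatorname{Im}z)^{-\alpha}$ grow as $t\to0$. Three families of errors must be balanced against the main order $t^{-\alpha/2}$: the cuspidal errors $O(t^{-\alpha/2+\eta(\alpha-1)})$ and $O(t^{\eta(\alpha-1)})$, which already save a power since $\alpha>1$; the smoothing error $O(\epsilon t^{-\eta})$ from Proposition~\ref{translation between measures}; and the equidistribution error, whose $s$-integral $\int_0^{h^{\pm}_t(z)}e^{(\alpha-\eta_0)s}ds\sim(\beta(t)\operatorname{Im}z)^{-(\alpha-\eta_0)}$, combined with the growth $S_l(\Psi^\epsilon_{-z})\ll \epsilon^{-(3+l)}t^{-\eta l}$ of the Sobolev norms under the unbounded translations $n_{-z}$ and the Lebesgue integral of $(\operatorname{Im}z)^{-(\alpha-\eta_0)}$ over $\mathcal{T}(\eta,t)$, contributes a term of size roughly $(t/\pi)^{-(\alpha-\eta_0)/2}\,\epsilon^{-(3+l)}\,t^{-\eta(l+\alpha-1)}$. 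Choosing $\eta\in(0,\tfrac14)$ and $\epsilon$ a suitable small power of $t$ — precisely the constraints imposed on $\eta$ — makes the $\tfrac{\eta_0}{2}$ saving dominate the losses and forces a genuine power gain $\rho>0$; verifying that these competing exponents can be made simultaneously positive is the crux of the computation.
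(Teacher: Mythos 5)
Your outline follows the paper's own strategy step for step (truncation to $\mathcal{T}(\eta,t)$ plus the cusp estimates, orbit-by-orbit reduction, the sandwich of Proposition~\ref{reformulation inequality}, unfolding, effective equidistribution, the measure translation of Proposition~\ref{translation between measures}, identification with Hausdorff measure, and summation to $\operatorname{sk}_{\Gamma}(\mathcal{P}_0)$), but it has a genuine gap at exactly the step the paper singles out as the new difficulty. When you apply Theorem~\ref{effective distribution} to the translates $\Psi^{\epsilon}_{-z}$, that theorem requires $\Psi\in C^{\infty}(\Omega)$ for a \emph{compact} $\Omega\subset\Gamma\backslash G/M$, and its implied constant depends on $\Omega$. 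In your setting $z$ runs over $\mathcal{T}(\eta,t)$, whose height reaches $t^{-\eta}\to\infty$, so the supports $\operatorname{supp}(\Psi^{\epsilon}_{-z})\subset\Gamma\backslash\Gamma n_zU_1M/M$ do not visibly stay inside any fixed compact set as $t\to 0$. The Sobolev growth under the unbounded translations $n_{-z}$ that you do track is a separate issue (distortion of derivatives by $\operatorname{Ad}_{n_z}$) and does not control the $\Omega$-dependence of the equidistribution error. The paper closes this gap by periodicity of the Apollonian configuration: $S_4S_3\in\mathcal{A}$ is the translation $z\mapsto z+4i$, so $\mathcal{A}\backslash\mathcal{A}n_zU_1K/K\subset\mathcal{A}\backslash\mathcal{A}N_EU_1K/K$ with $E=\{|\operatorname{Re}z|\leq 2,\ 0\leq\operatorname{Im}z\leq 4\}$, and hence all the functions $\Psi^{\xi}_{-z}$, $z\in\mathcal{T}(\eta,t)$, are supported in one fixed compact $\Omega$. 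Without this observation (or a substitute for it) the error term in your main display is not uniform in $z$ and the argument does not close; this is precisely why Theorem~\ref{main thm 1} is not an immediate corollary of Theorem~\ref{main thm}.

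Two further steps are asserted rather than proved, and both require real work in the paper. First, your foliation formula integrates $s$ from $0$, i.e.\ it treats $H\backslash HKP_{\pm}(\mathcal{T}(\eta,t))$ as if it were parametrized by $AN$; the passage from $HKA^{+}N$ to $HAN$ coordinates is only legitimate above the threshold $T_0=-\log(c_1\xi)$ (Lemma~\ref{structure analysis 1}(2) and Lemma~\ref{structure analysis 2}), and the low-$s$ piece --- the paper's $W_2=W_3\cup W_4$ --- must be split off and estimated separately, by reverting to the Euclidean count of Theorem~\ref{effective circle counting thm} and by a band estimate; this is the origin of constraint~(\ref{constraint 3}) on $\eta$. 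Second, to integrate the pointwise sandwich of Proposition~\ref{reformulation inequality} against a bump function you need the counting regions to be stable under $U_{c\epsilon}$-perturbation; Lemma~\ref{stability of KAN-decomposition} does not apply verbatim because the heights $h^{\pm}_t(z)$ depend on $z$ and the region grows as $t\to0$, so the paper proves the separate Lemma~\ref{enlarge the set}, which is also what forces the smoothing scale $\xi=t^{4\eta}$ and the thickening scale $t^{2\eta}$ rather than a free $\epsilon$. (Your summation over the other $\Gamma$-orbits likewise compresses an argument: for a representative with center $p$ and radius $r$ the weight becomes $(\operatorname{Im}z+p_2/r)^{-\alpha}$ with respect to $\omega_{g_0^{-1}\Gamma g_0}$, and one must check the resulting integral is unchanged.) These secondary points are repairable along the paper's lines, but the compactness issue above is the one your proposal cannot avoid engaging with.
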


 Set $\xi=t^{4\eta}$ for the rest of the paper. For any subset $E\subset \mathcal{T}$ and every small $t>0$, define
 \begin{align*}
 Q_t^{+}(E) &:=\{a_sn_{-z}:z\in E\,\,\text{and}\,\,-t^{\eta}\leq s\leq h^{+}_{t}(z)+t^{\eta}\},\\
 Q_t^{-}(E) &:=\{a_sn_{-z}:z\in E\,\,\text{and}\,\,t^{\eta}\leq s\leq h^{-}_{t}(z)-t^{\eta}\},
 \end{align*}
 where $h^{+}_{t}(\cdot)$ and $h^{-}_{t}(\cdot)$ are the functions defined as (\ref{height function}). We first show the following lemma:

\begin{lem}
\label{enlarge the set}
There exists $c_4>0$, such that for all sufficiently small $t>0$, we have
\begin{align*}
& KP_{+}(\mathcal{T}(\eta,t))U_{c_4\xi} \subset  KQ^{+}_{t}(\mathcal{T}(\eta,t)^{+}_{t^{2\eta}}),\\
& KQ^{-}_{t}(\mathcal{T}(\eta,t)^{-}_{t^{2\eta}})U_{c_4\xi} \subset KP_{-}(\mathcal{T}(\eta,t)),
\end{align*}
where $P_{+}(\cdot)$ and $P_{-}(\cdot)$ are  defined as (\ref{sets in NA}).
\end{lem}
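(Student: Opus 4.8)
The plan is to reduce both set inclusions to pointwise statements in $\mathbb{H}^3$, and then to control \emph{separately} the displacement of the boundary coordinate and of the logarithmic height under a small perturbation, exploiting that these two are governed by very different scales. Since $K$ appears on the left of both sides, it suffices to prove the pointwise versions: for every $a_sn_{-z}\in P_{+}(\mathcal{T}(\eta,t))$ and every $u\in U_{c_4\xi}$ one has $a_sn_{-z}\,u\in KQ^{+}_{t}(\mathcal{T}(\eta,t)^{+}_{t^{2\eta}})$, and symmetrically for the second inclusion (if $g_0u\in KQ^{+}_t$, then $k_0g_0u\in k_0KQ^{+}_t=KQ^{+}_t$ for all $k_0\in K$). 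I would encode the relevant $KAN$-data geometrically: writing a coset as $Ka_{s'}n_{-z'}$ is equivalent to recording the point $(a_{s'}n_{-z'})^{-1}j=z'+e^{-s'}j\in\mathbb{H}^3$, whose boundary coordinate is $z'$ and whose height is $e^{-s'}$. Thus, setting $p:=(a_sn_{-z})^{-1}j=z+e^{-s}j$ and $p':=u^{-1}p$, the membership $a_sn_{-z}u\in KQ^{+}_t(\cdots)$ is exactly the requirement that the boundary coordinate $z'$ of $p'$ lie in $\mathcal{T}(\eta,t)^{+}_{t^{2\eta}}$ and that $s':=-\log(\text{height of }p')$ satisfy $-t^\eta\le s'\le h^{+}_t(z')+t^\eta$.

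First I would record the size of $p$: since $z\in\mathcal{T}(\eta,t)$ we have $|z|\ll t^{-\eta}$, and the height $e^{-s}$ lies in $[\beta(t)\operatorname{Im}z,1]$, so in particular $|p|\ll t^{-\eta}$ and $e^{-s}\le 1$. Next I would decompose $u^{-1}$ through the bi-Lipschitz chart $N^{-}\times A\times M\times N\to G$ near $e$ (as in the proof of Lemma \ref{structure analysis 2}) as $u^{-1}=n^{-}_{w_0}a_{\tau_0}m_{\phi_0}n_{\zeta_0}$ with $|w_0|,|\tau_0|,|\phi_0|,|\zeta_0|\ll\xi$, and apply the factors to $p$ one at a time via the explicit Möbius action of $G$ on $\mathbb{H}^3$, in particular the formula sending $q=q_1+y_qj$ (with $q_1\in\mathbb{C}$, $y_q>0$) under $n^{-}_{w}$ to a point of height $y_q/(|wq_1+1|^2+|w|^2y_q^2)$. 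The factors $n_{\zeta_0}$, $m_{\phi_0}$, $a_{\tau_0}$ move the boundary coordinate by $\ll\xi+\xi|z|\ll t^{3\eta}$ and change the log-height by $\ll\xi$; the factor $n^{-}_{w_0}$ is the dominant one, contributing a boundary displacement $\ll|w_0|\,|p|^2\ll\xi t^{-2\eta}=t^{2\eta}$ and a relative height change $\ll|w_0|(|z|+e^{-s})\ll\xi t^{-\eta}=t^{3\eta}$. Hence the total displacements are $|z'-z|\ll t^{2\eta}$ and $|s'-s|\ll t^{3\eta}$, and this bookkeeping is precisely what forces the choice $\xi=t^{4\eta}$.

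Finally I would verify the range conditions. For the $+$ inclusion, $z'=z+O(t^{2\eta})$ lands in $\mathcal{T}(\eta,t)^{+}_{t^{2\eta}}$, and since $|h^{+}_t(z')-h^{+}_t(z)|=|\log(\operatorname{Im}z/\operatorname{Im}z')|\ll t^{2\eta}/t^\eta=t^\eta$ (using $\operatorname{Im}z\ge t^\eta$), the bounds $0\le s\le h^{+}_t(z)$ together with $|s'-s|\ll t^{3\eta}$ give $-t^\eta\le s'\le h^{+}_t(z')+t^\eta$ once $c_4$ is small. The second inclusion is identical in structure, except that one starts from $z'\in\mathcal{T}(\eta,t)^{-}_{t^{2\eta}}$ in the \emph{shrunken} set, so that its $O(t^{2\eta})$-perturbation remains inside $\mathcal{T}(\eta,t)$, and the $t^\eta$-buffers cut from the $s$-interval in $Q^{-}_t$ absorb the log-height perturbation, provided $c_4$ is chosen small enough that all implied constants fall below the $t^\eta$ and $t^{2\eta}$ margins.

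The main obstacle, and the reason this is more delicate than its Euclidean analogue (Lemma \ref{stability of KAN-decomposition}), is that the points $p=z+e^{-s}j$ sit at hyperbolic distance $\asymp(\tfrac12+\eta)\log(1/t)$ from $j$, so a perturbation $u$ of group-size $\xi$ can move $p$ by a hyperbolic distance as large as $\xi\,e^{d(j,p)}\asymp t^{3\eta-1/2}\to\infty$. A crude ``small group element moves points little'' estimate therefore fails outright. The point is that this large motion is almost entirely tangent to the boundary $\mathbb{C}$: it changes only the boundary coordinate $z$, at the harmless scale $t^{2\eta}$ absorbed by the enlargement of the base set, whereas the genuine change in the log-height $s$ is smaller by a factor comparable to the height $e^{-s}$ and stays $\ll t^{3\eta}\le t^\eta$. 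Separating these two effects, rather than bounding the single hyperbolic displacement, is the crux, and carrying it out rigorously amounts to the factor-by-factor computation above, with the $N^{-}$-factor requiring the most care.
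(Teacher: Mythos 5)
Your proposal is correct, and it lands on exactly the same quantitative facts as the paper's proof ($|z'-z|\ll t^{2\eta}$ and $|s'-s|\ll t^{3\eta}$, with the $N^{-}$ direction dominant, and $\xi=t^{4\eta}$ calibrated so that these fit inside the $t^{2\eta}$-enlargement of the base set and the $t^{\eta}$-buffers in the $s$-range), but the computational vehicle is genuinely different. The paper works entirely inside $G$: it writes $U_{\xi}=N^{-}_{\xi}M_{\xi}A_{\xi}N_{\xi}$ up to a Lipschitz constant, commutes the $M_{\xi}A_{\xi}$ and $N_{\xi}$ factors past $n_{-z}$ at the cost of an $O(t^{3\eta})$ change in $z$, and treats the dominant $N^{-}$ factor via the exact identity $n_{-z}n^{-}_{w}=n^{-}_{w/(1-zw)}\operatorname{diag}\bigl(1-wz,(1-wz)^{-1}\bigr)n_{-z/(1-zw)}$, then conjugates the resulting small $N^{-}$-element by $a_{s}$ (which contracts it, since $s\geq 0$) and Iwasawa-decomposes it in $K_{\xi}A_{\xi}N_{\xi}$; collecting terms gives membership in $Ka_{s+O(t^{3\eta})}n_{-(z+O(t^{2\eta}))}$, just as you obtain. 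You instead dualize through $Kg\mapsto g^{-1}j$, so that right multiplication by $u$ becomes the M\"obius action of $u^{-1}$ on the point $p=z+e^{-s}j\in\mathbb{H}^3$, and you read the same two displacements off the explicit upper-half-space action formulas (boundary shift $\ll|w|\,|p|^{2}$, log-height shift $\ll|w|\,|p|$ for the $N^{-}$ factor). What the paper's algebraic route buys is uniformity with its other arguments (the same coset manipulations as in Lemmas \ref{structure analysis 2} and \ref{stability of KAN-decomposition}); what your geometric route buys is that it dispenses with the matrix identity and the Iwasawa step, and it makes the conceptual point explicit: a perturbation of group-norm $\xi$ can move these points a hyperbolic distance of order $\xi e^{d(j,p)}\to\infty$, so the inclusion survives only because the displacement is anisotropic — large along $\mathbb{C}$ but absorbed by the $t^{2\eta}$-enlargement, small in log-height and absorbed by the $t^{\eta}$-buffers. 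Your reduction of both inclusions to pointwise statements, and your observation that the shrunken set $\mathcal{T}(\eta,t)^{-}_{t^{2\eta}}$ plays for $Q^{-}_{t}$ the role the enlarged set plays for $Q^{+}_{t}$, complete the argument once $c_4$ is chosen small against the implied constants, which is the same final step the paper leaves implicit in its choice of constants.
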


\begin{proof}
Up to a uniform Lipschitz constant, we may write $U_{\xi}=N^{-}_{\xi}M_{\xi}A_{\xi}N_{\xi}$. It suffices to show 
\begin{align*}
KP_{+}(\mathcal{T}(\eta,t))M_{\xi}A_{\xi}N_{\xi}^{-} &\subset KQ^{+}_{t}(\mathcal{T}(\eta,t)^{+}_{t^{2\eta}}).
\end{align*}

Fix $z\in \mathcal{T}(\eta,t)$.  For any $m_{\theta}a_s\in M_{\xi}A_{\xi}$,
\begin{equation*}
n_{-z}m_{\theta}a_{s}=m_{\theta}a_{s}n_{-(z+z')},
\end{equation*}
where $|z'|=O(t^{3\eta})$. Note that
\begin{equation*}
h^{+}_{t}(z+z')=h^{+}_{t}(z)+O(t^{2\eta}).
\end{equation*}
Hence 
\begin{align*}
P_{+}(\mathcal{T}(\eta,t))M_{\xi}A_{\xi} &\subset KQ^{+}_{t}(\mathcal{T}(\eta,t)^{+}_{t^{2\eta}}).
\end{align*}

For $n_{w}^{-}\in N^{-}_{\xi}$ and $a_s$ with $0\leq s\leq h^{+}_{t}(z)$,
\begin{equation*}
a_sn_{-z}n^{-}_{w}=a_sn^{-}_{\frac{w}{1-zw}}\begin{pmatrix} 1-wz & 0\\ 0 & (1-wz)^{-1}\end{pmatrix} n_{\frac{-z}{1-zw}}.
\end{equation*}
Since $n^{-}_{\frac{w}{1-zw}}\in U_{\xi}$, we have $a_sn^{-}_{\frac{w}{1-zw}}a_{-s}\in U_{\xi}$. Up to a uniform Lipschitz constant, we may write $a_sn^{-}_{\frac{w}{1-zw}}a_{-s}=ka_{s_1}n_{z_1}$ with $k\in K_{\xi}$, $a_{s_1}\in A_{\xi}$ and $n_{z_1}\in N_{\xi}$. Then

\begin{equation*}
a_sn_{-z}n^{-}_{w}=ka_{s+s_1}\begin{pmatrix} 1-wz & 0\\ 0 & (1-wz)^{-1}\end{pmatrix}n_{-(z+z_2)},
\end{equation*}
with $|z_2|=O(t^{2\eta})$. Therefore
\begin{align*}
P_{+}(\mathcal{T}(\eta,t))N_{\xi}^{-} &\subset KQ^{+}_{t}(\mathcal{T}(\eta,t)^{+}_{t^{2\eta}}).
\end{align*}

The second statement can be proved similarly.
\end{proof}

\begin{proof}[\textbf{Proof of Theorem 4.22}]
\textbf{Step 1:} First we consider $N_t(\Gamma(C_0)\cap\mathcal{T})$.

For simplicity, we set
\begin{align*}
V_{+} &:=  H\backslash HKQ^{+}_{t}(\mathcal{T}(\eta,t)^{+}_{t^{2\eta}}),\\
V_{-} &:= H\backslash HKQ^{-}_{t}(\mathcal{T}(\eta,t)^{-}_{t^{2\eta}}).
\end{align*}

For small $t>0$, define functions $F^{\pm}_{t}$ on $\Gamma \backslash G$:
\begin{align*}
F^{+}_{t}(g)&:=\sum_{\gamma \in \Gamma_{H}\backslash \Gamma}\chi_{V_{+}}([e]\gamma g),\\
F^{-}_{t}(g) &:=\sum_{\gamma \in \Gamma_{H}\backslash \Gamma} \chi_{V_{-}}([e]\gamma g).
\end{align*}
We deduce from  Propositions \ref{reformulation inequality}, \ref{circles in the cusp 2} and Lemma \ref{enlarge the set} that for all small $t>0$ and $g\in U_{c_4\xi}$ with $\xi=t^{4\eta}$ 
\begin{equation}
\label{sandwich counting formula}
F^{-}_{t}(g)\leq N_{t}(\Gamma(C_0)\cap \mathcal{T}(\eta,t)) \leq F^{+}_{t}(g)+\rho_1\cdot t^{-\frac{\alpha}{2}+\eta(\alpha-1)},
\end{equation}
where $\rho_1>0$ is some constant (cf. Lemma 6.4 in \cite{Asymptotic}).

Let $\psi^{\xi}$ be a non-negative function in $C_c^{\infty}(G)$, supported in $U_{c_4\xi}$ with integral one. Set $\Psi^{\xi}\in C_{c}^{\infty}(\Gamma \backslash G)$ to be the $\Gamma$-average of $\psi^{\xi}$. Integrating $F^{+}_{t}$ against $\Psi^{\xi}$, we get
\begin{align*}
\langle F^{+}_{t}, \Psi^{\xi} \rangle 
&=\int_{\Gamma\backslash G} \sum_{\gamma \in \Gamma_{H} \backslash \Gamma} \chi_{V_{+}}([e]\gamma g)\Psi^{\xi}(g)dg\\
&=\int_{g\in \Gamma_{H} \backslash G}\chi_{V_{+}}([e]g)\Psi^{\xi}(g) dg\\
&=\int_{g\in V_{+}}\int_{h\in \Gamma_{H} \backslash C^{\dagger}_0}\int_{m\in M}\Psi^{\xi}(hmg)dmdh d\lambda(g).
\end{align*}

Recall the function $h^{+}_{t}(\cdot)$ defined in (\ref{height function}).  The first constraint for $\eta$ is that
\begin{equation}
\label{constraint 1} 
0<\eta<1/10. 
\end{equation}
This constraint guarantees that for every $z\in \mathcal{T}(\eta,t)^{+}_{t^{2\eta}}$, 
\begin{equation*}
 -\log(c_1\xi)< h^{+}_{t}(z),
 \end{equation*}
 where $c_1$ is the constant described at the beginning of Section 3.2. 
 
 Set
 \begin{align}
 \label{decomposition}
 & T_0:=-\log(c_1\xi), \,\, \hat{A}(z):=\{a_s:T_0-2t^{\eta}\leq s\leq h^{+}_{t}(z)+2t^{\eta}\},\\
 &K(s):= \{k\in K:a_sk\in HKA^{+}\}\,\,\text{for every $s>0$},\nonumber\\
 & W_1:=\bigcup_{z\in\mathcal{T}(\eta,t)^{+}_{t^{2\eta}}} H\backslash H\hat{A}(z)n_{-z},\nonumber\\
 & W_2:=\bigcup_{0\leq s\leq T_0}H\backslash Ha_{s}K(s)N_{-\mathcal{T}(\eta,t)^{+}_{t^{2\eta}}}.\nonumber
 \end{align}
  For every $z\in \mathcal{T}(\eta,t)^{+}_{t^{2\eta}}$, using Lemma \ref{structure analysis 1} (1), we get
 \begin{equation*}
 \bigcup_{-t^{\eta}\leq s\leq h^{+}_{t}(z)+t^{\eta}}H\backslash HKa_sn_{-z}=\bigcup_{0\leq s\leq h^{+}_{t}(z)+t^{\eta}}H\backslash Ha_sK(s)n_{-z}.
 \end{equation*}
 Applying the same argument as the proof of Proposition \ref{rewrite B_T(E)} to the subset $\underset{0\leq s\leq h^{+}_{t}(z)+t^{\eta}}{\bigcup}H\backslash H a_sK(s)n_{-z}$ for every $z\in \mathcal{T}(\eta,t)^{+}_{t^{2\eta}}$, we have
 \begin{equation}
V_{+}\subset W_1N_{\rho_2\xi}\bigcup W_2,
 \end{equation}
for some constant $\rho_2>0$.

For $W_1N_{\rho_2\xi}$, we have
\begin{align}
\label{integral 4}
&\int_{W_1N_{\rho_2\xi}}\int_{\Gamma_{H}\backslash C^{\dagger}_{0}} \int_{m\in M} \Psi^{\xi}(hmg)dmdh d\lambda(g)\\
\leq& \int_{z\in \mathcal{T}^{+}_{\xi}} \int^{h^{+}_{t}(z)+3t^{\eta}}_{T_0-3t^{\eta}}e^{2s}\int_{\Gamma_{H} \backslash C^{\dagger}_{0}}\int_{m\in M} \Psi^{\xi}(hma_sn_{-z}) dm dh ds dn_{-z} \nonumber \\
=&\int_{z\in \mathcal{T}^{+}_{\xi}} \int^{h^{+}_{t}(z)+3t^{\eta}}_{T_0-3t^{\eta}}e^{2s}\int_{ \Gamma_{H} \backslash C^{\dagger}_{0}}\Psi^{\xi}_{-z}(ha_s)dh ds dn_{-z}, \nonumber
\end{align}
where $\mathcal{T}^{+}_{\xi}=U_{\rho_2\xi}\mathcal{T}(\eta,t)^{+}_{t^{2\eta}}$ and $\Psi^{\xi}_{-z}(g):=\int_{m\in M} \Psi^{\xi}(gmn_{-z})dm$ for every $g\in \Gamma \backslash G/M$. 

For any $z\in \mathcal{T}^{+}_{\xi}$, we have $\operatorname{supp}(\Psi^{\xi}_{-z})\subset \Gamma \backslash \Gamma n_z U_1M/M$, with $U_1$ the 1-neighborhood of identity in $G$. Let $E:=\{x+iy \in \mathbb{C}: -2\leq x \leq 2, \, 0\leq y\leq 4\}$. We can see  $\mathcal{A}\backslash \mathcal{A}n_zU_{1}K/K\subset \mathcal{A}\backslash \mathcal{A} N_{E}U_1K/K$ by applying $S_4S_3\in\mathcal{A}$ to $N_EU_1j$ finitely many times. Hence there exists a compact subset $\Omega \subset \Gamma \backslash G/M$ such that $\operatorname{supp}(\Psi^{\xi}_{-z})\subset \Omega$. Applying Theorem \ref{effective distribution} to $\Psi^{\xi}_{-z}$, we get 
\begin{align} 
\label{integral}
&\int_{W_1N_{\rho_2\xi}}\int_{\Gamma_{H}\backslash C^{\dagger}_{0}} \int_{m\in M} \Psi^{\xi}(hmg)dmdh d\lambda(g) \\ 
&\leq \int_{z\in \mathcal{T}^{+}_{\xi}}\int ^{h^{+}_{t}(z)+3t^{\eta}}_{T_0-3t^{\eta}}\frac{|\mu^{\operatorname{PS}}_{H}|}{|m^{\operatorname{BMS}}|}m^{\operatorname{BR}}(\Psi^{\xi}_{-z})e^{\alpha s}+\rho_3 (S_l(\Psi^{\xi}_{-z})e^{(\alpha-\eta_0)s})dsdn_{-z} \notag  \\
&\leq \int_{z\in \mathcal{T}^{+}_{\xi}}  c \left(\tfrac{t}{\pi}\right)^{-\frac{\alpha}{2}} m^{\operatorname{BR}}(\Psi^{\xi}_{-z}) (\operatorname{Im}z)^{-\alpha}(1+\rho_4 t^{\eta})+\rho_4(| z|^{2l+\eta_0-\alpha}S_l(\Psi^{\xi})t^{\frac{\eta_0-\alpha}{2}})dn_{-z} \nonumber\\
&\leq  c\,(1+\rho_5 t^{\eta})\,\left(\tfrac{t}{\pi}\right)^{-\frac{\alpha}{2}}\int_{z\in \mathcal{T}^{+}_{\xi}}m^{\operatorname{BR}}(\Psi^{\xi}_{-z}) (\operatorname{Im}z)^{-\alpha}dn_{-z} \nonumber \\
&+ \rho_5 \left(t^{-\frac{\alpha}{2}+\frac{\eta_0}{2}-\eta(6l+13+\eta_0-\alpha)}\right), \nonumber
\end{align} 
where $c:=\frac{|\mu^{\operatorname{PS}}_{H}|}{\alpha \cdot |m^{\operatorname{BMS}}|}$, $\eta_0$ is the constant in Theorem \ref{effective distribution},  and $\rho_3,\rho_4,\rho_5>0$  are some constants. In fact, to get the second inequality above, we use the fact that there exists $C>0$ such that  $S_{l}(\Psi^{\xi}_{-z})\leq C\cdot (y^{2l}S_{l}(\Psi^{\xi}))$ for $z=x+iy \in \mathcal{T}$. 

The second constraint for $\eta$ is that
\begin{equation}
\label{constraint 2}
 \frac{\eta_0}{2}-\eta(6l+13+\eta_0-\alpha)>\eta. 
 \end{equation}
 Applying Proposition \ref{translation between measures} to the first term in the third inequality of (\ref{integral}), we obtain
\begin{align}
\label{integral 13}
 &c\,(1+\rho_5\cdot t^{\eta})\,\left(\frac{t}{\pi}\right)^{-\frac{\alpha}{2}}\int_{z\in \mathcal{T}^{+}_{\epsilon}}m^{\operatorname{BR}}(\Psi^{\xi}_{-z}) (\operatorname{Im}z)^{-\alpha}dn_{-z} +\rho_5\cdot (t^{-\frac{\alpha}{2}+\eta})\\
 &\leq c\, \left(\frac{t}{\pi}\right)^{-\frac{\alpha}{2}} (1+\rho_6\cdot(t^{-\eta}\xi))(1+\rho_5\cdot t^{\eta})\int_{z\in\mathcal{T}}(\operatorname{Im}z)^{-\alpha}d\omega_{\Gamma}(z)+\rho_5\cdot (t^{-\frac{\alpha}{2}+\eta}) \nonumber \\
& \leq c\,\left(\frac{t}{\pi}\right)^{-\frac{\alpha}{2}}\int_{z\in\mathcal{T}}(\operatorname{Im}z)^{-\alpha}d\omega_{\Gamma}(z) +\rho_7\cdot \left(t^{-\frac{\alpha}{2}+\eta}\right),\nonumber
\end{align}
for some constants  $\rho_6,\rho_7>0$.

For the set $W_2$ in (\ref{decomposition}), note that 
\begin{align}
\label{decomposition 2}
W_2 &= H\backslash HKA^{+}_{T_0}N_{-\mathcal{T}(\eta,t)^{+}_{t^{2\eta}}} && (\text{by Lemma}\,\, \ref{structure analysis 1})\\
&\subset  H\backslash HKA^{+}_{T_1}N_{-\mathcal{T}(\eta,t)^{+}_{t^{2\eta}}} \cup \bigcup_{T_1-\rho_8\epsilon\leq s \leq T_0+\rho_8\epsilon}H\backslash Ha_{s}N_{-\mathcal{T}_{\epsilon}} && (\text{by Prop. }\,\ref{rewrite B_T(E)})  \nonumber\\
&:=W_3\cup W_4\nonumber,
\end{align}
where $T_1=-\log(c_1\epsilon)$ for some fixed small $\epsilon>0$ such that $T_1<T_0=-\log(c_1\xi)$, $\rho_8>0$ is a constant and  $\mathcal{T}_{\epsilon}:=U_{\rho_8\epsilon}\mathcal{T}(\eta,t)^{+}_{t^{2\eta}}$.

For the set $W_4$, by similar calculation as (\ref{integral 8}), we have
\begin{align*}
&\int_{W_4}\int_{\Gamma_{H}\backslash C_0^{\dagger}} \int_{M}\Psi^{\xi}(hmg)dm dh d\lambda(g)\\
\leq & 2\,c\,e^{\alpha T_0}\int_{z\in U_{\rho_{9}\epsilon}\mathcal{T}_{\epsilon}} m^{\operatorname{BR}}(\Psi^{\xi}_{-z}) dn_{-z} && (\rho_9>0\,\,\text{some constant}).
\end{align*}

Using Proposition \ref{relation of measures} and Corollary \ref{measure of finite region} and substituting $T_0$ by $-\log (c_1\xi)$, we obtain
\begin{align}
\label{integral 6}
\int_{W_4}\int_{\Gamma_{H}\backslash C_0^{\dagger}} \int_{M}\Psi^{\xi}(hmg)dm dh d\lambda(g)=O\left(t^{-\frac{\alpha}{2}+(\frac{\alpha}{2}-(4\alpha+1)\eta)}\right).
\end{align}
The third constraint for $\eta$ is that 
\begin{equation}
\label{constraint 3}
\frac{\alpha}{2}-(4\alpha+1)\eta>\eta.
\end{equation}
 This yields
\begin{equation}
\label{integral 12}
\int_{W_4}\int_{\Gamma_{H}\backslash C_0^{\dagger}} \int_{M}\Psi^{\xi}(hmg)dm dh d\lambda(g)=O(t^{-\frac{\alpha}{2}+\eta}).
\end{equation}

For the set $W_3$ in (\ref{decomposition 2}), reserving the process of translating the circle counting with respect to Euclidean metric into orbit counting, we have
\begin{align}
\label{integral 7}
&\int_{W_3}\int_{\Gamma_{H}\backslash C_0^{\dagger}} \int_{M}\Psi^{\xi}(hmg)dm dh d\lambda(g)\\
=& O(t^{-\eta}\cdot \#\{C\in \mathcal{P}(\mathcal{T}): C\cap \mathcal{T}_{0,2}\neq \emptyset, \operatorname{Curv}(C)\leq T_1\}) \nonumber \\
=& O\left(t^{-\frac{\alpha}{2}+(\frac{\alpha}{2}-\eta)}\right),\nonumber
\end{align}
since $T_1$ is fixed.

Summing (\ref{integral 13}), (\ref{integral 12}) and (\ref{integral 7}) together,  we get 
\begin{align*}
&\langle F^{+}_{t}, \Psi^{\xi}\rangle
\leq  c \, \left(\frac{t}{\pi}\right)^{-\frac{\alpha}{2}} \int_{z\in\mathcal{T}}(\operatorname{Im}z)^{-\alpha}d\omega_{\Gamma}(z)+\rho_9\cdot(t^{-\frac{\alpha}{2}+\eta}),
\end{align*}
for some constant $\rho_9>0$. In view of (\ref{sandwich counting formula}), we conclude that there exists $\rho'>0$, such that 
\begin{align*}
N_t(\Gamma (C_0)\cap\mathcal{T}) &\leq N_t(\Gamma(C_0)\cap\mathcal{T}(\eta,t))+n(\Gamma(C_0),t)\\ 
&\leq \langle F^{+}_{t}, \Psi^{\xi} \rangle+\rho_1\cdot (t^{-\frac{\alpha}{2}+\eta(\alpha-1)})+n(\Gamma(C_0),t)\\
& \leq  c\,\left(\frac{t}{\pi}\right)^{-\frac{\alpha}{2}}\int_{z\in\mathcal{T}}(\operatorname{Im}z)^{-\alpha}d\omega_{\Gamma}(z)+\rho_{10}\cdot(t^{-\frac{\alpha}{2}+\rho'}),
\end{align*}
for some constant $\rho_{10}>0$.

As for $\langle F^{-}_{t}, \Psi^{\xi} \rangle$ in (\ref{sandwich counting formula}), observe that
\begin{equation*}
V_{-}\supset \left\{ H\backslash Ha_sn_{-z}:z\in\mathcal{T}(\eta,t)^{-}_{t^{2\eta}},\,\,T_2\leq s\leq h^{-}_{t}(z)-t^{\eta}\right\},
\end{equation*}
where $T_2>0$ is some fixed large number and $h^{-}_{t}(\cdot)$ is defined as (\ref{height function}). Using similar argument as above, there exists $\rho''>0$ such that 
\begin{align*}
\langle F^{-}_{t}, \Psi^{\xi} \rangle 
\geq &c\, \left(\frac{t}{\pi}\right)^{-\frac{\alpha}{2}}\int_{z\in\mathcal{T}}(\operatorname{Im}z)^{-\alpha}d\omega_{\Gamma}(z)
+\rho_{11} \cdot(t^{-\frac{\alpha}{2}+\rho''}),
\end{align*}
where  $\rho_{11}>0$ is some constant.

Therefore, setting $\rho=\min\{\rho',\rho''\}$, we conclude that
\begin{equation*}
N_t(\Gamma(C_0))=c\,\left(\frac{t}{\pi}\right)^{-\frac{\alpha}{2}}\,\int_{z\in\mathcal{T}}(\operatorname{Im}z)^{-\alpha}d\omega_{\Gamma}(z)+O(t^{-\frac{\alpha}{2}+\rho}).
\end{equation*}

\textbf{Step 2:} We consider other $\Gamma$-orbits in $\mathcal{P}_0$. Let $C_1$ be a representative of a $\Gamma$-orbit with Euclidean center $p=p_1+ip_2\in \mathbb{C}$ and Euclidean radius $r>0$. Set 
\begin{equation*}
g_0:=\begin{pmatrix} 1& p\\0 & 1\end{pmatrix} \begin{pmatrix} \sqrt{r} & 0\\ 0 & \frac{1}{\sqrt{r}}\end{pmatrix}.
\end{equation*}
Then $g_0^{-1}(z)=\frac{z-p}{r}$ for $z\in \mathbb{C}$ and $g_0^{-1}(C_1)=C_0$.

Setting $\Gamma_0=g_0^{-1}\Gamma g_0$, we have
\begin{align*}
N_t(\Gamma (C_1)\cap\mathcal{T}) 
&=\# \{C\in \Gamma_0 (C_0): C\cap g_0^{-1}\mathcal{T}\neq \emptyset, \operatorname{Area}_{\operatorname{hyp}}(g_0C)>t\}.
\end{align*}

If the Euclidean center and the Euclidean radius of $C$ are $e_C$ and $r_C$ respectively, then the Euclidean center and the Euclidean radius of $g_0C$ are $re_C+p$ and $rr_C$ respectively. We deduce from (\ref{equivalent area}) that
\begin{align*}
N_t(\Gamma(C_1)\cap\mathcal{T})=\left\{C\in \Gamma_0 (C_0): C\cap g_0^{-1}\mathcal{T}\neq \emptyset, r_C>(\operatorname{Im}e_C+\frac{p_2}{r})\beta(t)\right\},
\end{align*}
with $\beta(t)=\frac{\sqrt{t(4\pi+t)}}{2\pi+t}$.

Recall that we introduced the measure $(\operatorname{Im}z)^{-\alpha}d\omega_{\Gamma}$ on $\mathcal{T}$ to estimate $N_t(\Gamma(C_0)\cap\mathcal{T})$. Similarly, we consider the measure $\left(\operatorname{Im}z+\frac{p_2}{r}\right)^{-\alpha}d\omega_{\Gamma_0}$ on $g_0^{-1}(\mathcal{T})$. We claim that for any Borel set $E\subset \mathcal{T}$, 
\begin{equation}
\int_{z\in g_0^{-1}E}\left(\operatorname{Im}(z)+\frac{p_2}{r}\right)^{-\alpha}d\omega_{\Gamma_0}(z)=\int_{z\in E} (\operatorname{Im}z)^{-\alpha}d\omega_{\Gamma}(z).
\end{equation}

Note that $\{g_0^{*}\nu_{\Gamma, g_0x}:x\in \mathbb{H}^3\}$ is a family of $\Gamma_0$-invariant conformal density of dimension $\delta_{\Gamma_0}=\delta_{\Gamma}=\alpha$. The claim can be verified using this observation together with Lemma \ref{Busemann function}.
 

Repeating the process of getting effective estimate of $N_t(\Gamma(C_0)\cap\mathcal{T})$, we can show that there exists $\rho>0$ such that as $t\to 0$,
\begin{align*}
 N_t(\Gamma(C_1)\cap\mathcal{T})
&=\frac{|\mu^{\operatorname{PS}}_{\Gamma_0,H}|}{\alpha|m^{\operatorname{BMS}}_{\Gamma_0}|}\,\left(\frac{t}{\pi}\right)^{-\frac{\alpha}{2}}\int_{z\in\mathcal{T}}(\operatorname{Im}z)^{-\alpha}d\omega_{\Gamma}(z)+O(t^{-\frac{\alpha}{2}+\rho}) .
\end{align*}
Note that $|m^{\operatorname{BMS}}_{\Gamma_0}| =|m^{\operatorname{BMS}}_{\Gamma}|$
by Lemma \ref{total mass of BMS}. Moreover, since $\Gamma$ just has rank 1 cusps, we know that   $(\operatorname{Im}z)^{-\alpha}d\omega_{\Gamma}$ agrees with $(\operatorname{Im}z)^{-\alpha}d\mathcal{H}^{\alpha}$ on its support $\overline{\mathcal{P}(\mathcal{T})}$ (\cite{Sullivan}). In conclusion, there exists $\rho>0$ such that as $t\to 0$,
\begin{equation*}
N_t(\mathcal{P}(\mathcal{T}))=\frac{\operatorname{sk}_{\Gamma}(\mathcal{P}_0)}{\alpha|m^{\operatorname{BMS}}_{\Gamma}|}\,\left(\frac{t}{\pi}\right)^{-\frac{\alpha}{2}}\int_{\overline{\mathcal{P}(\mathcal{T})}}(\operatorname{Im}z)^{-\alpha}d\mathcal{H}^{\alpha}(z)+O(t^{-\frac{\alpha}{2}+\rho}).
\end{equation*}
\end{proof}

\end{document}